\tikzset{
	>=angle 90, 
	shorten <=-1.5pt,
	shorten >=-1.5pt
}
\newcommand{\Z}{\mathbb{Z}}
\newcommand{\comment}[1]{}
\newtheorem*{maintheorem}{Main Theorem}
\newtheorem{symassumption}{Symmetry Assumption}
\newtheorem*{theorem*}{Theorem}
\newtheorem*{lemma*}{Lemma}
\newtheorem{proposition}{Proposition}[section]
\newtheorem{theorem}[proposition]{Theorem}
\newtheorem{corollary}[proposition]{Corollary}
\newtheorem{lemma}[proposition]{Lemma}
\theoremstyle{definition}
\newtheorem*{definition*}{Definition}
\newtheorem*{remark*}{Remark}
\begin{document}

\title{The 6-vertex model and deformations of the Weyl character formula}

\keywords{statistical mechanics, Weyl character formula, Yang-Baxter equation}

\author{Ben Brubaker}
\address{Department of Mathematics, 127 Vincent Hall, 206 Church St. SE, Minneapolis, MN 55455}
\email{brubaker@math.umn.edu}

\author[Andrew Schultz]{Andrew Schultz}
\address{Department of Mathematics, Wellesley College, 106 Central Street, Wellesley, MA 02482}
\email{andrew.c.schultz@gmail.com}

\begin{abstract} We use statistical mechanics -- variants of the six-vertex model in the plane studied by
means of the Yang-Baxter equation -- to give new deformations of Weyl's character formula for classical groups of 
Cartan type $B, C,$ and $D$, and a character formula of Proctor for type $BC$. In each case, the corresponding Boltzmann weights are associated to the free fermion
point of the six-vertex model. These deformations add to the earlier known examples in types $A$ and $C$ by
Tokuyama and Hamel-King, respectively. A special case for classical types recovers deformations of the Weyl denominator
formula due to Okada. 
\end{abstract}

\date{\today}

\maketitle

\parskip=10pt plus 2pt minus 2pt
 
\section{Introduction}

Two of the most common explicit realizations of the character of a highest weight representation of general linear
groups are the Weyl character formula and as a generating function over a combinatorial basis (e.g., semi-standard
Young tableaux or Gelfand-Tsetlin patterns). 
In \cite{tokuyama}, to any dominant weight $\mu$ of $\text{GL}(n,\mathbb{C})$, 
Tokuyama gave a generating function identity which simultaneously generalized these two expressions 
for the character. Hamel and King subsequently
gave an analogous result for $Sp(2n)$ in \cite{hk-symplectic} and noted in \cite{hk-bijective} that, in both cases, the generating function may be expressed
as the partition function of a square ice (or 6-vertex) model in the plane whose boundary conditions are determined by the
highest weight $\mu$. In either case, given complex parameters $\boldsymbol{x} = (x_1, \ldots, x_n)$ and $t$, 
and writing $\mathcal{Z}^{\mu+\rho}(\boldsymbol{x}; t)$ for the generating function in Cartan types $A$ or $C$, then the identity
can be written
\begin{equation} \mathcal{Z}^{\mu+\rho}(\boldsymbol{x}; t) = \boldsymbol{x}^\rho \prod_{\alpha \in \Phi^+} (1 + t \boldsymbol{x}^{-\alpha}) s_\mu(\boldsymbol{x}). \label{tokuyamaform} \end{equation}
Here $s_\mu(\boldsymbol{x})$ denotes the character with highest weight $\mu$. Setting $t = -1$ and dividing by the Weyl denominator on both sides gives the Weyl character formula. Setting $t=0$, the states of the model that remain
on the left-hand side of (\ref{tokuyamaform}) are in bijection with Gelfand-Tsetlin patterns with top row corresponding to $\mu$. In the symplectic case, these patterns are originally due to Zhelobenko~\cite{zhelobenko}.

Moreover, the right-hand side of (\ref{tokuyamaform}) appears in the representation theory of reductive groups $G$ over a local field.
More precisely, let $q$ be the size of the residue field for the local field and let $\boldsymbol{x}$ be the Langlands parameters
for an unramified principal series representation of $G$. Setting $t = -1/q$, then (\ref{tokuyamaform}) agrees with the 
Casselman-Shalika formula for the spherical Whittaker function at a torus element corresponding
to $\mu$. This formula plays an outsized role in the theory of automorphic forms, as it is central to both the Langlands-Shahidi
and Rankin-Selberg methods. Better still, a special case of \cite{mcnamara} shows that the individual contributions of the generating function $\mathcal{Z}^{\mu+\rho}$
in type $A$ are natural for computing the Whittaker function. (An analogous result is expected to follow similarly in type $C$.) Theorem 7.5 and Corollary 8.7 of \cite{mcnamara}, together
with \cite[p. 1091]{bbf-annals} imply roughly that the individual terms in the generating function are the contribution to the Whittaker function coming from the local field analogue
of MV cycles. The MV cycles are subsets of the affine Grassmannian defined by Mirkovi{\'c} and Vilonen \cite{mv-satake} in the proof of the geometric Satake correspondence.

The first author, with Bump and Friedberg, showed in \cite{bbf-ice} that the generating function identity of Tokuyama may be proved using techniques from statistical mechanics,
particularly the (parametrized) Yang-Baxter equation, in the spirit of Baxter's book \cite{baxter}. Following Baxter, they show that ice models with boundary conditions determined by a 
dominant weight $\mu$ admit a Yang-Baxter equation whenever a single algebraic expression $\Delta$ in the Boltzmann weights is equal to 0 --- the so-called ``free-fermion point'' of the model \cite{fan-wu}. 
Thus the Boltzmann weights used by Tokuyama are but one choice among a large family of weights satisfying the Yang-Baxter equation and leading to similar identities. The symplectic
case was handled similarly in \cite{ivanov}. Definitions and results relating to lattice models will be reviewed in more detail in the next section. 

This paper presents deformations of the Weyl character formula in the spirit of Tokuyama's formula for classical groups of Cartan type $B, C,$ and $D$ and for type $BC$  using variants of the six-vertex model. See \cite{proctor} for the connection between character formulae for $BC$ and odd symplectic groups.
The models used for each type are rectangular grids in the plane with certain pairs of horizontal boundary edges identified; the boundary conditions and precise form are 
presented in the next section, as each Cartan type has a slightly particular definition. In type $A$, the corresponding models are simply rectangular grids; the models
for other classical groups and their boundary edge identifications are meant to reflect embeddings of the respective group (or dual group) into $\text{GL}(n)$. The Boltzmann
weights associated to each model are ``spectrally dependent;'' that is, they are allowed to vary based on the row of the grid in which they occur.  We will refer to these row numbers as the spectral indices.

Our inspiration for these models, apart from the aforementioned antecedents, comes from two sources. Okada \cite{okada} (and later, Simpson \cite{simpson}) explored deformations of the Weyl {\it denominator} in the
spirit of Tokuyama's formula for other classical groups. This corresponds to the case $\mu = 0$ in (\ref{tokuyamaform}). Okada's generating functions were indexed by symmetry classes of alternating
sign matrices. Second, Kuperberg \cite{kuperberg} used six-vertex models with similar boundary conditions and the Yang-Baxter equation to enumerate such symmetry classes of alternating sign matrices (though the corresponding
Boltzmann weights were not at the free-fermion point; the $\Delta$ alluded to above is a cube root of unity). Thus we suspected that Boltzmann weights at the free-fermion point might simultaneously explain Okada's
ad-hoc denominator deformations and generalize to deformations of the Weyl character formula of Tokuyama type. This is exactly what we will show in the subsequent sections, and
at the same time we'll produce additional ice type models beyond the scope of \cite{kuperberg} that lead to interesting, previously unknown deformations. 

The definitions of these models are rather subtle; connections to symmetry classes of alternating sign matrices suggest a rough form, but the precise description of the models required to apply Yang-Baxter-type methods is, a priori, difficult to pin down. It would be very desirable to have a general recipe for producing the exact models, say from natural functors on highest weight representations of the corresponding group, but we know of no such satisfactory description for all cases presented here.

Let $\mathfrak{M}^\lambda$ denote any of the ice models in the next section corresponding to one of classical types $B,C$ or $D$ and a strictly dominant weight $\lambda$ for the respective group. 
Thus $\lambda - \rho_\star$ is dominant, where $\rho_\star$ denotes the respective Weyl vector for $\star \in \{ B, C, D \}$, the sum of fundamental dominant weights. Let $\mathcal{Z}(\mathfrak{M}^\lambda)$ be the corresponding partition function for the model, as defined in (\ref{partitiondef}).

\begin{maintheorem} There exists a family of spectrally dependent, free-fermionic Boltzmann weights so that
\begin{enumerate}
\item $\mathcal{Z}(\mathfrak{M}^\rho)$ divides $\mathcal{Z}(\mathfrak{M}^\lambda)$ (as a polynomial expression in the Boltzmann weights). Moreover, the resulting quotient is symmetric under the Weyl group action on spectral indices. (Theorem~\ref{prop:divisibility.for.partition.functions} and Proposition \ref{prop:partition.functions.for.rho})
\item The partition function $\mathcal{Z}(\mathfrak{M}^\rho)$ factors as a product of homogeneous polynomials of degree at most 2 in the Boltzmann weights. The number of such factors
depends on the number of positive roots of the corresponding Lie type. (Proposition~\ref{prop:partition.functions.for.rho})
\item A particular choice of Boltzmann weights in the family, which we refer to as ``deformation weights'' (see Section~\ref{okadamatch}), has two further specializations:
\begin{enumerate}
\item One for which $\mathcal{Z}(\mathfrak{M}^{\rho})$ recovers Okada's deformed denominator identities \cite{okada} under a weight-preserving bijection between ice in $\mathfrak{M}^\rho$ and symmetry classes of alternating sign matrices. (Proposition~\ref{prop:okada.bijection})
\item And another such that $\mathcal{Z}(\mathfrak{M}^{\lambda})$ reduces to the Weyl character formula for the representation of highest weight $\lambda - \rho_\star$ on the Lie group corresponding to~$\star$. (Theorem~\ref{weylchartheorem})
\end{enumerate}
\end{enumerate}
The ice model for type $BC$ also satisfies the above properties, but following \cite{proctor}, the resulting character formula uses the Weyl group for type $D$ and the Weyl vector $\rho_B$ of type $B$.
\end{maintheorem}

A special case of the above results in type $B$, essentially using deformation weights, appeared as part of the Ph.D. thesis of Sawyer Tabony~\cite{tabony}.

Despite knowing all of these properties of the partition function $\mathcal{Z}(\mathfrak{M}^\lambda)$, we are not able to evaluate it explicitly for arbitrary strictly dominant weights. Specializing to the ``deformation weights,'' which are functions of parameters $\boldsymbol{x} = (x_1, \ldots, x_n)$ and $\boldsymbol{t} = (t_1, \ldots, t_n)$, then parts (1) and (2) of the main theorem show $\mathcal{Z}(\mathfrak{M}^\lambda)$ are divisible by a deformed Weyl denominator, just as in Tokuyama's result. However, unlike Tokuyama's identity, the resulting quotient is not independent of $\boldsymbol{t}$. So in particular, it is not equal to a highest weight character in $\boldsymbol{x}$ as in the earlier type $A$ and $C$ results (and hence not matching the Casselman-Shalika formula). Our methods are limited in that the Yang-Baxter equation only shows that the resulting partition function is symmetric and one needs additional methods to pin down the explicit form. Tabony~\cite{tabony} used additional combinatorial methods to prove deformed versions of Pieri's rule and Clebsch-Gordan coefficients for the type $B$ partition function with ``deformation weights'' which seem suggestive that these partition functions might have a natural representation theoretic characterization, e.g., as the characters of modules for a deformation of the corresponding Lie algebra.

During a 2010 workshop at BIRS in Banff, Canada, Brubaker discussed this problem with Hamel and asked if their tableaux methods might yield conjectures and/or proofs of these deformations. In forthcoming independent work, Hamel and King have arrived at similar deformations and made precise conjectures for each model and even proved the conjectures in select cases as of this writing. Their generating functions essentially use a generalized version of the deformation weights in the above theorem, though not the entire family at the free-fermion point $\Delta = 0$. It would be interesting to try other methods from statistical mechanics to obtain further information about the partition function (e.g., the quantum inverse scattering method as in~\cite{korepinetal}).

The authors thank Hamel and King for helpful conversations and their willingness to openly share portions of their work in progress. The computer algebra software Sage~\cite{sage} was used extensively to perform supporting computations. This work was partially supported by NSF grant DMS-1258675.

\section{Ice Models for Classical Groups \label{classyice}}

Let $M$ be one of the classical, complex matrix groups $GL(n)$, $Sp(2n)$, $SO(2n+1)$, $SO(2n)$, or the odd symplectic group $Sp(2n-1)$ as defined in \cite{proctor}. Given a strictly dominant weight $\lambda$ for the corresponding group $M$, we describe associated lattice models -- families $\mathfrak{M}^\lambda$ of planar graphs with fixed boundary depending on $\lambda$. In the language of statistical mechanics, the graphs in this family are the ``admissible states'' of the model $\mathfrak{M}^\lambda$. 

We begin by reviewing the model for $GL(n)$ studied in \cite{bbf-ice}, though the notation here is slightly altered.  Under the usual identification of the weight lattice with $\mathbb{Z}^n$, strictly dominant weights are expressible as $n$-tuples $\lambda = [\lambda_1,\cdots,\lambda_n]$ with $\lambda_1 > \cdots > \lambda_n$. For convenience,  we assume $\lambda_n \geq 1$. To each such $\lambda$,  we associate a rectangular grid of $n$ rows and $\lambda_1$ columns.  Starting from the top and proceeding to bottom, the rows are labelled $1$ through $n$; counting from right to left, the columns are numbered $1$ through $\lambda_1$.  The set $\mathfrak{A}^\lambda$ is then the collection of all directed graphs on this grid such that every vertex has in-degree equal to out-degree, and such that the following boundary conditions hold:
\begin{itemize}
\item the far-left and far-right edges of each row  point inward;
\item the bottom edge of every column  points outward;
\item for any column whose index equals a part of $\lambda$, the top edge  points outward;
\item for any column whose index doesn't equal a part of $\lambda$, the top edge  points inward.
\end{itemize}  Figure \ref{fig:sample.A3} gives an example of a state in $\mathfrak{A}^{[5,4,2]}$.

\begin{figure}[!ht]
\begin{tikzpicture}

\node [label=left:$1$] at (0,2) {};
\node [label=left:$2$] at (0,1) {};
\node [label=left:$3$] at (0,0) {};

\node [label=above:$5$] at (0.5,2.5) {};
\node [label=above:$4$] at (1.5,2.5) {};
\node [label=above:$3$] at (2.5,2.5) {};
\node [label=above:$2$] at (3.5,2.5) {};
\node [label=above:$1$] at (4.5,2.5) {};

\draw [>-] (0,2) -- (1,2);
\draw [>-] (0,1) -- (1,1);
\draw [>-] (0,0) -- (1,0);

\draw [<-] (0.5,2.5) -- (0.5,1.5);
\draw [<-<] (0.5,1.5) -- (0.5,.5);
\draw [->] (0.5,0.5) -- (0.5,-0.5);

\draw [>-] (1,2) -- (2,2);
\draw [>-] (1,1) -- (2,1);
\draw [<-] (1,0) -- (2,0);

\draw [<-] (1.5,2.5) -- (1.5,1.5);
\draw [>->] (1.5,1.5) -- (1.5,.5);
\draw [->] (1.5,0.5) -- (1.5,-0.5);

\draw [<-] (2,2) -- (3,2);
\draw [>-] (2,1) -- (3,1);
\draw [<-] (2,0) -- (3,0);

\draw [>-] (2.5,2.5) -- (2.5,1.5);
\draw [<->] (2.5,1.5) -- (2.5,.5);
\draw [->] (2.5,0.5) -- (2.5,-0.5);

\draw [>-] (3,2) -- (4,2);
\draw [<-] (3,1) -- (4,1);
\draw [<-] (3,0) -- (4,0);

\draw [<-] (3.5,2.5) -- (3.5,1.5);
\draw [>->] (3.5,1.5) -- (3.5,.5);
\draw [->] (3.5,0.5) -- (3.5,-0.5);

\draw [<-<] (4,2) -- (5,2);
\draw [<-<] (4,1) -- (5,1);
\draw [<-<] (4,0) -- (5,0);

\draw [>-] (4.5,2.5) -- (4.5,1.5);
\draw [>->] (4.5,1.5) -- (4.5,.5);
\draw [->] (4.5,0.5) -- (4.5,-0.5);

\end{tikzpicture}
\caption{An admissible state in $\mathfrak{A}^{[5,4,2]}$.}
\label{fig:sample.A3}
\end{figure}

Combining this idea from \cite{bbf-ice} of boundary conditions corresponding to a dominant weight with the lattice models from \cite{kuperberg} for symmetry classes of alternating sign matrices, we can produce new families of models for each of the other matrix groups listed above. Each new family of graphs consists of rectangular grids with a certain ``u-turn" boundary, a collection of nested u-turn bends along the right-hand side.  For this reason, these models will collectively be referred to as the ``bent ice" diagrams. For the sake of uniformity, these other models will continue to be associated to integer $n$-tuples $\lambda = [\lambda_1,\cdots,\lambda_n]$ with $\lambda_1 > \cdots > \lambda_n \geq 1$; hence $n$ will always refer to the number of parts of $\lambda$, and $\lambda_1$ is always the greatest part of $\lambda$. Similarly, continue to denote $\rho = [n, n-1, \ldots, 1]$ in all cases. Thus $\lambda$ and $\rho$ are not the usual coordinatization of strictly dominant weights for all such Cartan types, but provide a convenient and uniform numbering scheme for columns. In each case, as with $\mathfrak{A}^\lambda$, the admissible states of the model are all directed graphs satisfying certain boundary conditions corresponding to $\lambda$, and for which the in-degree and out-degree at each vertex are equal. Thus it suffices to just describe the shape and boundary conditions for each model.

The first such family will be denoted $\mathfrak{B}^\lambda$.  The underlying graph has $2n$ rows and $\lambda_1$ columns.  The top $n$ rows are labelled $1$ through $n$, and the bottom rows are labelled $\overline{n}$ through $\overline{1}$.  There is an edge connecting the far right side of row $j$ with row $\overline{j}$, and we place a vertex on each bend; these bent edges are nested so that the resultant graph is planar.  Columns are numbered $1$ through $\lambda_1$ as we move from right to left.  
See Figure \ref{fig:type.B.configs} for an example of an admissible state.  

A related family is denoted $\mathfrak{B}_*^\lambda$; it shares the same boundary conditions as $\mathfrak{B}^\lambda$, but includes an additional central row (labeled 0) which does not have a bend attached to it; the far left arrow on this central row points inward, and the far right arrow on this central row points outward. Row labeling from top to bottom is $1$ through $n$, then $0$, then $\overline{n}$ through $\overline{1}$. Figure \ref{fig:type.B.configs} includes an example from this family.


\begin{figure}
\centering
\begin{subfigure}[!ht]{.45\textwidth}
\centering
\begin{tikzpicture}[scale=.75]
\node [label=left:$1$] at (0,2) {};
\node [label=left:$2$] at (0,1) {};
\node [label=left:$\overline{2}$] at (0,0) {};
\node [label=left:$\overline{1}$] at (0,-1) {};

\node [label=above:$4$] at (0.5,2.5) {};
\node [label=above:$3$] at (1.5,2.5) {};
\node [label=above:$2$] at (2.5,2.5) {};
\node [label=above:$1$] at (3.5,2.5) {};

\draw [>-] (0,2) -- (1,2);
\draw [>-] (0,1) -- (1,1);
\draw [>-] (0,0) -- (1,0);
\draw [>-] (0,-1) -- (1,-1);

\draw [<-] (0.5,2.5) -- (0.5,1.5);
\draw [<-] (0.5,1.5) -- (0.5,.5);
\draw [<-] (0.5,0.5) -- (0.5,-0.5);
\draw [<->] (0.5,-0.5) -- (0.5,-1.5);

\draw [>-] (1,2) -- (2,2);
\draw [>-] (1,1) -- (2,1);
\draw [>-] (1,0) -- (2,0);
\draw [<-] (1,-1) -- (2,-1);

\draw [>-] (1.5,2.5) -- (1.5,1.5);
\draw [>-] (1.5,1.5) -- (1.5,.5);
\draw [>-] (1.5,0.5) -- (1.5,-0.5);
\draw [>->] (1.5,-0.5) -- (1.5,-1.5);

\draw [>-] (2,2) -- (3,2);
\draw [>-] (2,1) -- (3,1);
\draw [>-] (2,0) -- (3,0);
\draw [<-] (2,-1) -- (3,-1);

\draw [<-] (2.5,2.5) -- (2.5,1.5);
\draw [<-] (2.5,1.5) -- (2.5,.5);
\draw [>-] (2.5,0.5) -- (2.5,-0.5);
\draw [>->] (2.5,-0.5) -- (2.5,-1.5);

\draw [>->] (3,2) -- (4,2);
\draw [<-<] (3,1) -- (4,1);
\draw [>->] (3,0) -- (4,0);
\draw [<-<] (3,-1) -- (4,-1);

\draw [>-] (3.5,2.5) -- (3.5,1.5);
\draw [>-] (3.5,1.5) -- (3.5,.5);
\draw [>-] (3.5,0.5) -- (3.5,-0.5);
\draw [>->] (3.5,-0.5) -- (3.5,-1.5);

\draw [>-]
	(4,2) arc (90:0:1.5);
\draw [*->]
	(5.5,0.5) arc (0:-90:1.5);

\draw [<-]
	(4,1) arc (90:0:.5);
\draw [*-<]
	(4.5,0.5) arc (0:-90:.5);
\end{tikzpicture}
\end{subfigure}
\begin{subfigure}[h]{.45\textwidth}
  \centering
\begin{tikzpicture}[scale=.65]

\node [label=left:$1$] at (1,2) {};
\node [label=left:$2$] at (1,1) {};
\node [label=left:$0$] at (1,0) {};
\node [label=left:$\overline{2}$] at (1,-1) {};
\node [label=left:$\overline{1}$] at (1,-2) {};

\node [label=above:$4$] at (1.5,2.5) {};
\node [label=above:$3$] at (2.5,2.5) {};
\node [label=above:$2$] at (3.5,2.5) {};
\node [label=above:$1$] at (4.5,2.5) {};

\draw [>-] (1,2) -- (2,2);
\draw [>-] (1,1) -- (2,1);
\draw [>-] (1,0) -- (2,0);
\draw [>-] (1,-1) -- (2,-1);
\draw [>-] (1,-2) -- (2,-2);

\draw [<-] (1.5,2.5) -- (1.5,1.5);
\draw [>-] (1.5,1.5) -- (1.5,.5);
\draw [>-] (1.5,0.5) -- (1.5,-0.5);
\draw [>-] (1.5,-0.5) -- (1.5,-1.5);
\draw [>->] (1.5,-1.5) -- (1.5,-2.5);

\draw [<-] (2,2) -- (3,2);
\draw [>-] (2,1) -- (3,1);
\draw [>-] (2,0) -- (3,0);
\draw [>-] (2,-1) -- (3,-1);
\draw [>-] (2,-2) -- (3,-2);

\draw [>-] (2.5,2.5) -- (2.5,1.5);
\draw [<-] (2.5,1.5) -- (2.5,.5);
\draw [>-] (2.5,0.5) -- (2.5,-0.5);
\draw [>-] (2.5,-0.5) -- (2.5,-1.5);
\draw [>->] (2.5,-1.5) -- (2.5,-2.5);

\draw [>-] (3,2) -- (4,2);
\draw [<-] (3,1) -- (4,1);
\draw [>-] (3,0) -- (4,0);
\draw [>-] (3,-1) -- (4,-1);
\draw [>-] (3,-2) -- (4,-2);

\draw [<-] (3.5,2.5) -- (3.5,1.5);
\draw [<-] (3.5,1.5) -- (3.5,.5);
\draw [<-] (3.5,0.5) -- (3.5,-0.5);
\draw [>-] (3.5,-0.5) -- (3.5,-1.5);
\draw [>->] (3.5,-1.5) -- (3.5,-2.5);

\draw [>->] (4,2) -- (5,2);
\draw [<-<] (4,1) -- (5,1);
\draw [<->] (4,0) -- (5,0);
\draw [>->] (4,-1) -- (5,-1);
\draw [>-<] (4,-2) -- (5,-2);

\draw [>-] (4.5,2.5) -- (4.5,1.5);
\draw [>-] (4.5,1.5) -- (4.5,.5);
\draw [>-] (4.5,0.5) -- (4.5,-0.5);
\draw [<-] (4.5,-0.5) -- (4.5,-1.5);
\draw [<->] (4.5,-1.5) -- (4.5,-2.5);


\draw [>-]
	(5,2) arc (90:0:2);
\draw [*->]
	(7,0) arc (0:-90:2);

\draw [<-]
	(5,1) arc (90:0:1);
\draw [*-<]
	(6,0) arc (0:-90:1);

\end{tikzpicture}
 
\end{subfigure}

\caption{Admissible states of $\mathfrak{B}^{[4,2]}$ (left) and  $\mathfrak{B}_*^{[4,2]}$ (right).}
\label{fig:type.B.configs}
\end{figure}

The next family --- denoted $\mathfrak{C}^\lambda$ --- also shares the boundary conditions of the $\mathfrak{B}^\lambda$ model, but has both an additional central row and an additional half column to the right of all the full columns and passing through the lower half of rows. This row and half column are connected by a vertex in the center of the diagram, and each is labelled $0$.  The two adjacent edges to the new vertex are internal edges of the model. The left-most edge of row 0 points inward as before, and the bottom-most edge of the half-column 0 has an outward pointing arrow.  
We show an example in Figure \ref{fig:type.C.configs}.

The family $\mathfrak{C}_*^\lambda$ has a structure similar to that of the $\mathfrak{B}^\lambda$ family, but has an additional half column to the right of the last full column in the diagram.  This half column (labeled 0) only intersects the bottom rows labeled $\overline{n}$ to $\overline{1}$.  The boundary edges are decorated with arrows as in the $\mathfrak{B}^\lambda$ model, and the top edge of the half column is always decorated with an inward pointing arrow, regardless of $\lambda$.  Figure \ref{fig:type.C.configs} also includes an example of an admissible state of $\mathfrak{C}_*^{\lambda}$.

\begin{figure}
\centering
\begin{subfigure}[!ht]{.45\textwidth}
\centering
  \begin{tikzpicture}[scale=.75]

\node [label=left:$1$] at (1,2) {};
\node [label=left:$2$] at (1,1) {};
\node [label=left:$0$] at (1,0) {};
\node [label=left:$\overline{2}$] at (1,-1) {};
\node [label=left:$\overline{1}$] at (1,-2) {};

\node [label=above:$3$] at (1.5,2.5) {};
\node [label=above:$2$] at (2.5,2.5) {};
\node [label=above:$1$] at (3.5,2.5) {};
\node [label=above:$0$] at (4.5,2.5) {};

\draw [>-] (1,2) -- (2,2);
\draw [>-] (1,1) -- (2,1);
\draw [>-] (1,0) -- (2,0);
\draw [>-] (1,-1) -- (2,-1);
\draw [>-] (1,-2) -- (2,-2);

\draw [<-] (1.5,2.5) -- (1.5,1.5);
\draw [<-] (1.5,1.5) -- (1.5,.5);
\draw [<-] (1.5,0.5) -- (1.5,-0.5);
\draw [>-] (1.5,-0.5) -- (1.5,-1.5);
\draw [>->] (1.5,-1.5) -- (1.5,-2.5);

\draw [>-] (2,2) -- (3,2);
\draw [>-] (2,1) -- (3,1);
\draw [<-] (2,0) -- (3,0);
\draw [>-] (2,-1) -- (3,-1);
\draw [>-] (2,-2) -- (3,-2);

\draw [>-] (2.5,2.5) -- (2.5,1.5);
\draw [>-] (2.5,1.5) -- (2.5,.5);
\draw [>-] (2.5,0.5) -- (2.5,-0.5);
\draw [>-] (2.5,-0.5) -- (2.5,-1.5);
\draw [>->] (2.5,-1.5) -- (2.5,-2.5);

\draw [>-<] (3,2) -- (4,2);
\draw [>->] (3,1) -- (4,1);
\draw [<-] (3,0) -- (4,0);
\draw [>-] (3,-1) -- (4,-1);
\draw [>-] (3,-2) -- (4,-2);

\draw [<-] (3.5,2.5) -- (3.5,1.5);
\draw [>-] (3.5,1.5) -- (3.5,.5);
\draw [>-] (3.5,0.5) -- (3.5,-0.5);
\draw [<-] (3.5,-0.5) -- (3.5,-1.5);
\draw [>->] (3.5,-1.5) -- (3.5,-2.5);

\draw [-] (4,2) -- (5,2);
\draw [-] (4,1) -- (5,1);
\draw [<-] (4,-1) -- (5,-1);
\draw [>-] (4,-2) -- (5,-2);


\draw [-]
	(5,2) arc (90:0:2);
\draw [*-<]
	(7,0) arc (0:-90:2);

\draw [-]
	(5,1) arc (90:0:1);
\draw [*->]
	(6,0) arc (0:-90:1);


	\draw [>-,decoration={markings, mark=at position 0.58 with {\arrow{*}}},postaction={decorate}]	
		(4,0) -- (4.5,0) -- (4.5,-.5);
	\draw [>-]	(4.5,-.5) -- (4.5,-1.5);
	\draw [>->]	(4.5,-1.5) -- (4.5,-2.5);

\end{tikzpicture}
\end{subfigure}
\begin{subfigure}{.45\textwidth}
\centering
  \begin{tikzpicture}[scale=.75]

\node [label=left:$1$] at (1,2) {};
\node [label=left:$2$] at (1,1) {};
\node [label=left:$\overline{2}$] at (1,0) {};
\node [label=left:$\overline{1}$] at (1,-1) {};

\node [label=above:$3$] at (1.5,2.5) {};
\node [label=above:$2$] at (2.5,2.5) {};
\node [label=above:$1$] at (3.5,2.5) {};
\node [label=above:$0$] at (4.5,2.5) {};

\draw [>-] (1,2) -- (2,2);
\draw [>-] (1,1) -- (2,1);
\draw [>-] (1,0) -- (2,0);
\draw [>-] (1,-1) -- (2,-1);

\draw [<-] (1.5,2.5) -- (1.5,1.5);
\draw [>-] (1.5,1.5) -- (1.5,.5);
\draw [>-] (1.5,0.5) -- (1.5,-.5);
\draw [>->] (1.5,-0.5) -- (1.5,-1.5);

\draw [<-] (2,2) -- (3,2);
\draw [>-] (2,1) -- (3,1);
\draw [>-] (2,0) -- (3,0);
\draw [>-] (2,-1) -- (3,-1);

\draw [<-] (2.5,2.5) -- (2.5,1.5);
\draw [<-] (2.5,1.5) -- (2.5,.5);
\draw [>-] (2.5,0.5) -- (2.5,-.5);
\draw [>->] (2.5,-.5) -- (2.5,-1.5);

\draw [<-] (3,2) -- (4,2);
\draw [<-] (3,1) -- (4,1);
\draw [>-] (3,0) -- (4,0);
\draw [>-] (3,-1) -- (4,-1);

\draw [>-] (3.5,2.5) -- (3.5,1.5);
\draw [<-] (3.5,1.5) -- (3.5,.5);
\draw [<-] (3.5,0.5) -- (3.5,-.5);
\draw [>->] (3.5,-.5) -- (3.5,-1.5);

\draw [>-] (4,2) -- (5,2);
\draw [<-] (4,1) -- (5,1);
\draw [<-] (4,0) -- (5,0);
\draw [>-] (4,-1) -- (5,-1);

\draw [-]
	(5,2) arc (90:0:1.5);
\draw [*->]
	(6.5,0.5) arc (0:-90:1.5);

\draw [-]
	(5,1) arc (90:0:.5);
\draw [*-<]
	(5.5,0.5) arc (0:-90:.5);

	\draw [>-]	(4.5,.5) -- (4.5,-.5);
	\draw [<->]	(4.5,-.5) -- (4.5,-1.5);

\end{tikzpicture}
\end{subfigure}
\caption{Elements of $\mathfrak{C}^{[3,1]}$ (left) and $\mathfrak{C}_*^{[3,2]}$ (right).}
\label{fig:type.C.configs}

\end{figure}

%
%

\comment{
\begin{figure}[!ht]
\centering
\begin{minipage}{.45\textwidth}
  \centering
  \begin{tikzpicture}

\node [label=left:$x_1$] at (1,2) {};
\node [label=left:$x_2$] at (1,1) {};
\node [label=left:$-1$] at (1,0) {};
\node [label=left:$\overline{x_2}$] at (1,-1) {};
\node [label=left:$\overline{x_1}$] at (1,-2) {};

\draw [>-] (1,2) -- (2,2);
\draw [>-] (1,1) -- (2,1);
\draw [>-] (1,0) -- (2,0);
\draw [>-] (1,-1) -- (2,-1);
\draw [>-] (1,-2) -- (2,-2);

\draw [<-] (1.5,2.5) -- (1.5,1.5);
\draw [<-] (1.5,1.5) -- (1.5,.5);
\draw [<-] (1.5,0.5) -- (1.5,-0.5);
\draw [>-] (1.5,-0.5) -- (1.5,-1.5);
\draw [>->] (1.5,-1.5) -- (1.5,-2.5);

\draw [>-] (2,2) -- (3,2);
\draw [>-] (2,1) -- (3,1);
\draw [<->] (2,0) -- (3,0);
\draw [>-] (2,-1) -- (3,-1);
\draw [>-] (2,-2) -- (3,-2);

\draw [<-] (2.5,2.5) -- (2.5,1.5);
\draw [<-] (2.5,1.5) -- (2.5,.5);
\draw [>-] (2.5,0.5) -- (2.5,-0.5);
\draw [>-] (2.5,-0.5) -- (2.5,-1.5);
\draw [>->] (2.5,-1.5) -- (2.5,-2.5);

\draw [>-] (3,2) -- (4,2);
\draw [<-] (3,1) -- (4,1);
\draw [>-] (3,-1) -- (4,-1);
\draw [>-] (3,-2) -- (4,-2);


\draw [<-]
	(4,2) arc (90:0:2);
\draw [*-<]
	(6,0) arc (0:-90:2);

\draw [<-]
	(4,1) arc (90:0:1);
\draw [*-<]
	(5,0) arc (0:-90:1);


	\draw [>-]	(3.5,-.5) -- (3.5,-1.5);
	\draw [<->]	(3.5,-1.5) -- (3.5,-2.5);

\end{tikzpicture}
  \caption{A sample $C^{**}_2$ configuration for $\lambda = \rho = (1,0)$}
  \label{fig:sample.C**2}
\end{minipage}
\end{figure}
}

Our next family is denoted $\mathfrak{D}^{\lambda}$; it is much like $\mathfrak{C}_*^{\lambda}$, but now the half column is labeled $1$, so the first full column to its left is labeled 2, etc. Moreover, when assigning arrows along the top of each column according to $\lambda$, we \emph{do} include the half column in our count.

Lastly, there's a family we call $\mathfrak{BC}^{\lambda}$.  Its $n$th row is central and has no bend attached to it, much like the $\mathfrak{B}_*^\lambda$ model (though this central row is labeled $n$ in this model instead of $0$).  Hence the total number of pairs of rows connected by a u-turn bend is one less than the number of terms in the partition.  The boundary conditions on the $n$-th row are set so that both ends point inward.  Examples of elements of $\mathfrak{D}^\lambda$ and $\mathfrak{BC}^\lambda$ can be found in Figure \ref{fig:type.D.configs}. 

\begin{figure}
\centering

\begin{subfigure}[!ht]{.49\textwidth}
  \centering
  \begin{tikzpicture}

\node [label=left:$1$] at (2,2) {};
\node [label=left:$2$] at (2,1) {};
\node [label=left:$\overline{2}$] at (2,0) {};
\node [label=left:$\overline{1}$] at (2,-1) {};

\node [label=above:$5$] at (2.5,2.5) {};
\node [label=above:$4$] at (3.5,2.5) {};
\node [label=above:$3$] at (4.5,2.5) {};
\node [label=above:$2$] at (5.5,2.5) {};
\node [label=above:$1$] at (6.5,2.5) {};

\draw [>-] (2,2) -- (3,2);
\draw [>-] (2,1) -- (3,1);
\draw [>-] (2,0) -- (3,0);
\draw [>-] (2,-1) -- (3,-1);

\draw [<-] (2.5,2.5) -- (2.5,1.5);
\draw [<-] (2.5,1.5) -- (2.5,.5);
\draw [<-] (2.5,0.5) -- (2.5,-.5);
\draw [>->] (2.5,-.5) -- (2.5,-1.5);

\draw [>-] (3,2) -- (4,2);
\draw [>-] (3,1) -- (4,1);
\draw [<-] (3,0) -- (4,0);
\draw [>-] (3,-1) -- (4,-1);

\draw [>-] (3.5,2.5) -- (3.5,1.5);
\draw [>-] (3.5,1.5) -- (3.5,.5);
\draw [>-] (3.5,0.5) -- (3.5,-.5);
\draw [<->] (3.5,-.5) -- (3.5,-1.5);

\draw [>-] (4,2) -- (5,2);
\draw [>-] (4,1) -- (5,1);
\draw [>-] (4,0) -- (5,0);
\draw [<-] (4,-1) -- (5,-1);

\draw [>-] (4.5,2.5) -- (4.5,1.5);
\draw [>-] (4.5,1.5) -- (4.5,.5);
\draw [>-] (4.5,0.5) -- (4.5,-.5);
\draw [>->] (4.5,-.5) -- (4.5,-1.5);

\draw [>-] (5,2) -- (6,2);
\draw [>-] (5,1) -- (6,1);
\draw [>-] (5,0) -- (6,0);
\draw [<-] (5,-1) -- (6,-1);

\draw [>-] (5.5,2.5) -- (5.5,1.5);
\draw [>-] (5.5,1.5) -- (5.5,.5);
\draw [>-] (5.5,0.5) -- (5.5,-.5);
\draw [>->] (5.5,-.5) -- (5.5,-1.5);

\draw [>-] (6,2) -- (7,2);
\draw [>-] (6,1) -- (7,1);
\draw [>-] (6,0) -- (7,0);
\draw [<-] (6,-1) -- (7,-1);

\draw [-]
	(7,2) arc (90:0:1.5);
\draw [*->]
	(8.5,0.5) arc (0:-90:1.5);

\draw [-]
	(7,1) arc (90:0:.5);
\draw [*->]
	(7.5,0.5) arc (0:-90:.5);

\draw [<-]	(6.5,.5) -- (6.5,-.5);
\draw [>->]	(6.5,-.5) -- (6.5,-1.5);

\end{tikzpicture}
\end{subfigure}
\begin{subfigure}[!ht]{.5\textwidth}
\centering
\begin{tikzpicture}[scale=1]

\node [label=left:$1$] at (1,2) {};
\node [label=left:$2$] at (1,1) {};
\node [label=left:$3$] at (1,0) {};
\node [label=left:$\overline{2}$] at (1,-1) {};
\node [label=left:$\overline{1}$] at (1,-2) {};

\node [label=above:$5$] at (1.5,2.5) {};
\node [label=above:$4$] at (2.5,2.5) {};
\node [label=above:$3$] at (3.5,2.5) {};
\node [label=above:$2$] at (4.5,2.5) {};
\node [label=above:$1$] at (5.5,2.5) {};

\draw [>-] (1,2) -- (2,2);
\draw [>-] (1,1) -- (2,1);
\draw [>-] (1,0) -- (2,0);
\draw [>-] (1,-1) -- (2,-1);
\draw [>-] (1,-2) -- (2,-2);

\draw [<-] (1.5,2.5) -- (1.5,1.5);
\draw [<-] (1.5,1.5) -- (1.5,.5);
\draw [<-] (1.5,0.5) -- (1.5,-0.5);
\draw [>-] (1.5,-0.5) -- (1.5,-1.5);
\draw [>->] (1.5,-1.5) -- (1.5,-2.5);

\draw [>-] (2,2) -- (3,2);
\draw [>-] (2,1) -- (3,1);
\draw [<-] (2,0) -- (3,0);
\draw [>-] (2,-1) -- (3,-1);
\draw [>-] (2,-2) -- (3,-2);

\draw [<-] (2.5,2.5) -- (2.5,1.5);
\draw [>-] (2.5,1.5) -- (2.5,.5);
\draw [>-] (2.5,0.5) -- (2.5,-0.5);
\draw [<-] (2.5,-0.5) -- (2.5,-1.5);
\draw [>->] (2.5,-1.5) -- (2.5,-2.5);

\draw [<-] (3,2) -- (4,2);
\draw [>-] (3,1) -- (4,1);
\draw [>-] (3,0) -- (4,0);
\draw [<-] (3,-1) -- (4,-1);
\draw [>-] (3,-2) -- (4,-2);

\draw [>-] (3.5,2.5) -- (3.5,1.5);
\draw [>-] (3.5,1.5) -- (3.5,.5);
\draw [>-] (3.5,0.5) -- (3.5,-0.5);
\draw [>-] (3.5,-0.5) -- (3.5,-1.5);
\draw [<->] (3.5,-1.5) -- (3.5,-2.5);

\draw [<-] (4,2) -- (5,2);
\draw [>-] (4,1) -- (5,1);
\draw [>-] (4,0) -- (5,0);
\draw [>-] (4,-1) -- (5,-1);
\draw [<-] (4,-2) -- (5,-2);

\draw [>-] (4.5,2.5) -- (4.5,1.5);
\draw [<-] (4.5,1.5) -- (4.5,.5);
\draw [>-] (4.5,0.5) -- (4.5,-0.5);
\draw [>-] (4.5,-0.5) -- (4.5,-1.5);
\draw [>->] (4.5,-1.5) -- (4.5,-2.5);

\draw [>-] (5,2) -- (6,2);
\draw [<-] (5,1) -- (6,1);
\draw [>-<] (5,0) -- (6,0);
\draw [>-] (5,-1) -- (6,-1);
\draw [<-] (5,-2) -- (6,-2);

\draw [<-] (5.5,2.5) -- (5.5,1.5);
\draw [<-] (5.5,1.5) -- (5.5,.5);
\draw [<-] (5.5,0.5) -- (5.5,-0.5);
\draw [>-] (5.5,-0.5) -- (5.5,-1.5);
\draw [>->] (5.5,-1.5) -- (5.5,-2.5);

\draw [>-]
	(6,2) arc (90:0:2);
\draw [*->]
	(8,0) arc (0:-90:2);

\draw [<-]
	(6,1) arc (90:0:1);
\draw [*-<]
	(7,0) arc (0:-90:1);

\end{tikzpicture}
 

  \end{subfigure}
\caption{An element of $\mathfrak{D}^{[5,1]}$ on the left, and $\mathfrak{BC}^{[5,4,1]}$ on the right.}
\label{fig:type.D.configs}

\end{figure}

In the sequel we will often make use of bijections between our ice models above and certain classes of matrices that are akin to (if not precisely) alternating sign matrices (ASMs).  Recall that an ASM is a square matrix whose entries come from $\{-1,0,1\}$, whose nonzero entries alternate along any given row or column, and for which the net sum along any given row or column is $1$. 

To create a matrix associated to an admissible state $A\in \mathfrak{A}^\lambda$, one fills in the entries of an $n \times \lambda_1$ matrix $\widehat{A}$ according to the following dictionary: if the vertex in row $j$, column $k$ of $A$ has inward pointing arrows only along the horizontal axis, then $\hat a_{jk} = 1$; if the vertex has inward pointing arrows only along the vertical axis, then $\hat a_{jk} = -1$; otherwise, $\hat a_{jk} = 0$.  The procedure for associating a matrix to an admissible state of one of the bent models above is similar, though in these cases the associated matrix $\widehat{A}$ has only the left half of its entries determined by the ice configuration, with the remaining values determined by half-turn symmetry.  So, for instance, the matrix associated to a state in $\mathfrak{B}^\lambda$ is $2n \times 2\lambda_1$, and the matrix associated to a state in $\mathfrak{C}^\lambda$ is $2n +1 \times 2\lambda_1 + 1$.  We give two examples in Figure \ref{fig:matrix.bijection}.

\begin{figure}[!ht]
\begin{subfigure}{.4\textwidth}
$$\left(\begin{array}{rrrrrrrr}1&-1&0&0&1&0&0&0\\0&1&0&0&0&0&0&0\\0&0&1&-1&-1&1&0&0\\0&0&0&0&0&0&1&0\\0&0&0&1&0&0&-1&1\end{array}\right)$$
\end{subfigure}
\begin{subfigure}{.4\textwidth}
$$\left(\begin{array}{rrrrrrr}1&0&-1&1&0&0&0\\0&1&0&-1&1&0&0\\0&0&1&-1&0&1&0\\0&0&0&1&-1&0&1\end{array}\right)$$
\end{subfigure}

\caption{The matrices associated to the states of $\mathfrak{B}_*^{[4,2]}$ and $\mathfrak{C}_*^{[3,2]}$ from Figures \ref{fig:type.B.configs} and \ref{fig:type.C.configs}.}\label{fig:matrix.bijection}
\end{figure}

Note that the set of admissible states $\mathfrak{A}^\rho$ is in bijection with the collection of $n \times n$ alternating sign matrices, the set $\mathfrak{B}^\rho$ is in bijection with half-turn symmetric $2n \times 2n$ alternating sign matrices (see, e.g., Figure~7 of \cite{kuperberg}), and the set $\mathfrak{C}^\rho$ is in bijection with the set of half-turn symmetric $2n+1 \times 2n+1$ alternating sign matrices (see Figure 10 in \cite{razumovstroganov}).  We discuss further connections of our various models to alternating sign matrices in Section \ref{okadamatch}.

Finally, our models differ from the type $C$ model of \cite{hk-symplectic} in the identification of pairs of rows along the right-hand boundary. The bends in the models above are nested, linking rows equidistant from the central row(s), while the model of Hamel and King indentifies the right-most edges of consecutive rows. There appears to be a real dichotomy between models of the two types. As we have remarked, nested models are more closely tied to symmetry classes of alternating sign matrices, while the non-nested type $C$ model has physical implications (see \cite{yamamoto-tsuchiya}) and is more closely connected to branching rules for the symplectic group, as manifested in the Gelfand-Tsetlin patterns of Zhelobenko. It would be interesting to develop a theory of non-nested models for other types.

\section{Boltzmann weights and the Yang-Baxter equation}
Each tetravalent vertex in an admissible state is required to have two inward pointing arrows and two outward pointing arrows. Boltzmann weights are assigned to each of the six possible adjacent edge decorations. These weights are allowed to depend on the row in which the vertex sits, and so we call the row numbers the ``spectral indices." For instance, if the inward-pointing arrows for a vertex decoration in the $j$th row from the top are in the north and west positions, we call this weight $a_1^{(j)}$.  The labeling of the rows indicates that the same vertex decoration in the $j$th row from the bottom is denoted $a_1^{(\overline{j})}$.

\begin{figure}[!ht]
\begin{tabular}
{|c|c|c|c|c|c|}
\hline
$\begin{tikzpicture}
\node [label=left:$j$] at (0.5,1) {};
\node [label=right:$ $] at (1.5,1) {};
\node [label=above:$ $] at (1,1.5) {};
\node [label=below:$ $] at (1,0.5) {};
\draw [>->] (0.5,1) -- (1.5,1);
\draw [>->] (1,1.5) -- (1,0.5);
\end{tikzpicture}$
&
$\begin{tikzpicture}
\node [label=left:$j$] at (0.5,1) {};
\node [label=right:$ $] at (1.5,1) {};
\node [label=above:$ $] at (1,1.5) {};
\node [label=below:$ $] at (1,0.5) {};
\draw [<-<] (0.5,1) -- (1.5,1);
\draw [<-<] (1,1.5) -- (1,0.5);
\end{tikzpicture}$
&
$\begin{tikzpicture}
\node [label=left:$j$] at (0.5,1) {};
\node [label=right:$ $] at (1.5,1) {};
\node [label=above:$ $] at (1,1.5) {};
\node [label=below:$ $] at (1,0.5) {};
\draw [>->] (0.5,1) -- (1.5,1);
\draw [<-<] (1,1.5) -- (1,0.5);
\end{tikzpicture}$
&
$\begin{tikzpicture}
\node [label=left:$j$] at (0.5,1) {};
\node [label=right:$ $] at (1.5,1) {};
\node [label=above:$ $] at (1,1.5) {};
\node [label=below:$ $] at (1,0.5) {};
\draw [<-<] (0.5,1) -- (1.5,1);
\draw [>->] (1,1.5) -- (1,0.5);
\end{tikzpicture}$
&
$\begin{tikzpicture}
\node [label=left:$j$] at (0.5,1) {};
\node [label=right:$ $] at (1.5,1) {};
\node [label=above:$ $] at (1,1.5) {};
\node [label=below:$ $] at (1,0.5) {};
\draw [<->] (0.5,1) -- (1.5,1);
\draw [>-<] (1,1.5) -- (1,0.5);
\end{tikzpicture}$
&
$\begin{tikzpicture}
\node [label=left:$j$] at (0.5,1) {};
\node [label=right:$ $] at (1.5,1) {};
\node [label=above:$ $] at (1,1.5) {};
\node [label=below:$ $] at (1,0.5) {};
\draw [>-<] (0.5,1) -- (1.5,1);
\draw [<->] (1,1.5) -- (1,0.5);
\end{tikzpicture}$
\\
\hline
$a_1^{(j)} = NW$&$a_2^{(j)}=SE$&$b_1^{(j)}=SW$&$b_2^{(j)}=NE$&$c_1^{(j)}=NS$&$c_2^{(j)}=EW$\\ \hline
\end{tabular}
\caption{Boltzmann weights for each of the six possible vertex decorations}\label{fig:six.vertex.model}
\end{figure}

Weights are also assigned to the two possible adjacent edge decorations of the bend vertices and of the vertex at the right-angle in row 0 of the $\mathfrak{C}$ model.  Recall that in each case, a valid vertex decoration consists of one inward- and one outward-pointing arrow.  These additional vertex configurations are given in Figure \ref{fig:bonus.vertex.configurations}.
\begin{figure}[!ht]
\begin{tabular}{|c|c|c|c|}
\hline
\begin{tikzpicture}[scale=0.5]
\node [label=left:$j$] at (0,1) {};
\node [label=left:$\overline{j}$] at (0,-1) {};
\node [label=right:$ $] at (1,0) {};

\draw [>-]
	(0,1) arc (90:0:1);
\draw [*->]
	(1,0) arc (0:-90:1);
\end{tikzpicture}
&
\begin{tikzpicture}[scale=0.5]
\node [label=left:$j$] at (0,1) {};
\node [label=left:$\overline{j}$] at (0,-1) {};
\node [label=right:$ $] at (1,0) {};

\draw [<-]
	(0,1) arc (90:0:1);
\draw [*-<]
	(1,0) arc (0:-90:1);
\end{tikzpicture}
&
\begin{tikzpicture}
\node [label=left:$0$] at (0,1) {};
\node [label=below:$ $] at (1,0) {};
\node [label=above:$ $] at (1,1) {};

\draw [>-] (0,1)--(1,1);
\draw [*->] (1,1)--(1,0);
\end{tikzpicture}
&
\begin{tikzpicture}
\node [label=left:$0$] at (0,1) {};
\node [label=below:$ $] at (1,0) {};
\node [label=above:$ $] at (1,1) {};

\draw [<-] (0,1)--(1,1);
\draw [*-<] (1,1)--(1,0);
\end{tikzpicture}
\\\hline
$D^{(j)}$&$U^{(j)}$&$R^{(0)} = R$&$L^{(0)}=L$\\\hline
\end{tabular}
\caption{Boltzmann weights for special vertices.}
\label{fig:bonus.vertex.configurations}
\end{figure}

To each admissible state $A$ in a given family $\mathfrak{M}^\lambda$, define the Boltzmann weight of the state, $\text{wt}(A)$, to be the product of the Boltzmann weights over all vertices in $A$.  The partition function for $\mathfrak{M}^\lambda$ is then 
\begin{equation} \mathcal{Z}(\mathfrak{M}^\lambda) := \sum_{A \in \mathfrak{M}^\lambda} \text{wt}(A). \label{partitiondef} \end{equation}
Our primary goal is to study the partition function for the various families we have constructed above, as a polynomial in the Boltzmann weights. Our main tool is the Yang-Baxter equation as in \cite{baxter} and as applied to ice models in \cite{kuperberg}, whose definition we recall momentarily. First we place an additional restriction on the Boltzmann weights which will aid in the application of the Yang-Baxter equation.

\begin{symassumption}
The symmetry conditions assumed for Boltzmann weights for bent ice diagrams are
$$a_1^{(j)} = a_2^{(\overline{j})}, a_1^{(\overline{j})} = a_2^{(j)}, b_1^{(j)} = b_2^{(\overline{j})}, b_1^{(\overline{j})} = b_2^{(j)}, c_1^{(j)} = c_1^{(\overline{j})}, c_2^{(j)} = c_2^{(\overline{j})}.$$
\end{symassumption}

An additional motivation for these restrictions comes from their connection to symmetry classes of alternating sign matrices. Half-turn symmetric alternating sign matrices may be represented by ice models in one of two ways --- either as a subset of the admissible states in the $\mathfrak{A}^\rho$ model with correlated vertices or as a bent ice model. From the point of view of statistical mechanics, the latter is to be preferred since the alternating sign matrices are in bijection with all admissible states. However if we take the former approach, then any vertex decoration in the top-right quartile of the ice determines the vertex decoration in the corresponding entry of the bottom-left quartile. One determines the vertex decoration in the bottom-left quartile by simply rotating the decoration in the upper-right quartile by $180^\circ$. For instance, if the vertex decoration in row $1$, column $2n$ is $a_1$, then the vertex decoration in row $2n$, column $1$ will be $a_2$.  Hence from this point of view, it is natural insist $a_1^{(1)} = a_2^{(\overline{1})}$.  

Notice that in the $\mathfrak{B}_*,\mathfrak{C}$ and $\mathfrak{BC}$ models, the central row has the property that its vertical mirror image is itself, and so this gives us a degeneracy of the spectral index: $0 = \overline{0}$ in the first two families, and $n = \overline{n}$ in the third family.  The symmetry conditions on the Boltzmann weights therefore give
\begin{align*}
a_1^{(0)} &= a_2^{(\overline{0})} = a_2^{(0)} = a_1^{(\overline{0})}\\
b_1^{(0)} &= b_2^{(\overline{0})} = b_2^{(0)} = b_1^{(\overline{0})}\end{align*}
in the $\mathfrak{B}_*$ and $\mathfrak{C}$ families.  Hence in these families we write $a^{(0)}$ and $b^{(0)}$ for these two quantities.  Similar results follow in the $\mathfrak{BC}$ family, with the parameter $0$ replaced by $n$.

To analyze the partition functions for each model, we will apply the Yang-Baxter equation (YBE) in the spirit of \cite{baxter}. Baxter used slightly different boundary conditions in the six-vertex model which leads to correlations among the six vertices and a simplification of the Boltzmann weights (the so-called ``field-free case''), so we use the more general formulation in \cite{bbf-ice}. To each spectral index $j$, define
\begin{equation} \Delta^{(j)} = a_1^{(j)}a_2^{(j)}+b_1^{(j)}b_2^{(j)} - c_1^{(j)}c_2^{(j)}. \label{deltadefined} \end{equation}

\begin{proposition}[Yang-Baxter Equation {\cite[Th.~3]{bbf-ice}}]\label{prop:ybe}
Let $j$ and $k$ be a pair of spectral indices whose Boltzmann weights satisfy that $\Delta^{(j)} = \Delta^{(k)} = 0$. Then
there is a third set of Boltzmann weights for the six-vertex model given by
$$
\scalebox{0.85}{
\makebox[0pt]{
\begin{tabular}
{|c|c|c|c|c|c|}
\hline
$\begin{tikzpicture}[scale=0.5]
\node [label=left:$j$] at (0,1) {};
\node [label=left:$k$] at (0,0) {};
\node [label=right:$ $] at (2,1) {};
\node [label=right:$ $] at (2,0) {};
\node [label=above:$ $] at (0,1) {};
\node [label=below:$ $] at (0,0) {};

\draw [>->] (0,1) .. controls (1,1) and (1,0) .. (2,0);
\draw [>->] (0,0) .. controls (1,0) and (1,1) .. (2,1);

\end{tikzpicture}$
&
$\begin{tikzpicture}[scale=0.5]
\node [label=left:$j$] at (0,1) {};
\node [label=left:$k$] at (0,0) {};
\node [label=right:$ $] at (2,1) {};
\node [label=right:$ $] at (2,0) {};
\node [label=above:$ $] at (0,1) {};
\node [label=below:$ $] at (0,0) {};

\draw [<-<] (0,1) .. controls (1,1) and (1,0) .. (2,0);
\draw [<-<] (0,0) .. controls (1,0) and (1,1) .. (2,1);

\end{tikzpicture}$
&$\begin{tikzpicture}[scale=0.5]
\node [label=left:$j$] at (0,1) {};
\node [label=left:$k$] at (0,0) {};
\node [label=right:$ $] at (2,1) {};
\node [label=right:$ $] at (2,0) {};
\node [label=above:$ $] at (0,1) {};
\node [label=below:$ $] at (0,0) {};

\draw [<->] (0,1) .. controls (1,1) and (1,0) .. (2,0);
\draw [>-<] (0,0) .. controls (1,0) and (1,1) .. (2,1);

\end{tikzpicture}$
&$\begin{tikzpicture}[scale=0.5]
\node [label=left:$j$] at (0,1) {};
\node [label=left:$k$] at (0,0) {};
\node [label=right:$ $] at (2,1) {};
\node [label=right:$ $] at (2,0) {};
\node [label=above:$ $] at (0,1) {};
\node [label=below:$ $] at (0,0) {};

\draw [>-<] (0,1) .. controls (1,1) and (1,0) .. (2,0);
\draw [<->] (0,0) .. controls (1,0) and (1,1) .. (2,1);

\end{tikzpicture}$
&$\begin{tikzpicture}[scale=0.5]
\node [label=left:$j$] at (0,1) {};
\node [label=left:$k$] at (0,0) {};
\node [label=right:$ $] at (2,1) {};
\node [label=right:$ $] at (2,0) {};
\node [label=above:$ $] at (0,1) {};
\node [label=below:$ $] at (0,0) {};

\draw [>->] (0,1) .. controls (1,1) and (1,0) .. (2,0);
\draw [<-<] (0,0) .. controls (1,0) and (1,1) .. (2,1);

\end{tikzpicture}$
&
$\begin{tikzpicture}[scale=0.5]
\node [label=left:$j$] at (0,1) {};
\node [label=left:$k$] at (0,0) {};
\node [label=right:$ $] at (2,1) {};
\node [label=right:$ $] at (2,0) {};
\node [label=above:$ $] at (0,1) {};
\node [label=below:$ $] at (0,0) {};

\draw [<-<] (0,1) .. controls (1,1) and (1,0) .. (2,0);
\draw [>->] (0,0) .. controls (1,0) and (1,1) .. (2,1);

\end{tikzpicture}$

\\ \hline
$a_1^{(k)}a_2^{(j)}+b_1^{(j)}b_2^{(k)}$ & $a_1^{(j)}a_2^{(k)}+b_1^{(k)}b_2^{(j)}$ & $c_1^{(j)}c_2^{(k)}$ & $c_1^{(k)}c_2^{(j)}$ & $a_1^{(j)}b_2^{(k)}-a_1^{(k)}b_2^{(j)}$ & $a_2^{(j)}b_1^{(k)}-a_2^{(k)}b_1^{(j)}$ \\ 
\hline
\end{tabular}
}}
$$
such that the following identity of partition functions holds:
$$
\mathcal{Z} \left(
 \begin{minipage}{1.75in}
\begin{tikzpicture}
\node [label=left:$j$] at (-0.5,1) {};
\node [label=left:$k$] at (-0.5,0) {};

\node [circle,draw,scale=.6] at (-.25,1) {$\alpha$};
\node [circle,draw,scale=0.55] at (-.25,0) {$\beta$};
\node [circle,draw,scale=.55] at (2,-.75) {$\gamma$};
\node [circle,draw,scale=.55] at (2.75,0) {$\delta$};
\node [circle,draw,scale=.6] at (2.75,1) {$\epsilon$};
\node [circle,draw,scale=.55] at (2,1.75) {$\phi$};

\draw [-] (0,1) .. controls (1,1) and (1,0) .. (2,0);
\draw [-] (0,0) .. controls (1,0) and (1,1) .. (2,1);
\draw [-] (2,0) -- (2.5,0);
\draw [-] (2,1) -- (2.5,1);
\draw [-] (2,1.5) -- (2,-.5);

\end{tikzpicture}
\end{minipage}
\right)
\quad =\quad 
\mathcal{Z} \left(
\begin{minipage}{1.75in}
\begin{tikzpicture}
\node [label=left:$j$] at (-0.5,1) {};
\node [label=left:$k$] at (-0.5,0) {};

\node [circle,draw,scale=.6] at (-.25,1) {$\alpha$};
\node [circle,draw,scale=0.55] at (-.25,0) {$\beta$};
\node [circle,draw,scale=.55] at (.5,-.75) {$\gamma$};
\node [circle,draw,scale=.55] at (2.75,0) {$\delta$};
\node [circle,draw,scale=.6] at (2.75,1) {$\epsilon$};
\node [circle,draw,scale=.55] at (.5,1.75) {$\phi$};

\draw [-] (0.5,1) .. controls (1.5,1) and (1.5,0) .. (2.5,0);
\draw [-] (0.5,0) .. controls (1.5,0) and (1.5,1) .. (2.5,1);
\draw [-] (0,0) -- (0.5,0);
\draw [-] (0,1) -- (0.5,1);
\draw [-] (.5,1.5) -- (.5,-.5);
\end{tikzpicture}
\end{minipage}
\right)
$$
for any choice of boundary arrow decorations $\alpha,\beta,\gamma,\delta,\epsilon,\phi$.
\end{proposition}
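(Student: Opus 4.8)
The plan is to reduce the identity to a finite local verification. Both partition functions in the statement are sums over the interior edge decorations of a planar graph with exactly three internal vertices — the braiding (R-matrix) vertex carrying the new weights, together with the two ordinary six-vertex vertices where the vertical line meets rows $j$ and $k$ — and six boundary half-edges whose arrows $\alpha,\beta,\gamma,\delta,\epsilon,\phi$ are held fixed. On the left-hand configuration the three internal edges are the two strand segments joining the crossing to the vertical line and the vertical segment between the two six-vertex vertices; the right-hand configuration has the analogous three internal edges. So it suffices to fix each admissible choice of $(\alpha,\beta,\gamma,\delta,\epsilon,\phi)$ and to check that the weight of the left graph, summed over admissible completions of its interior, equals that of the right graph.

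First I would cut down the number of boundary conditions to be checked. The ice rule (in-degree equal to out-degree at every vertex) forces the number of inward-pointing boundary arrows to equal the number of outward-pointing ones: summing in-degrees over the three vertices counts each internal edge once and each inward boundary arrow once, and likewise for out-degrees, so with three internal edges one gets exactly three arrows in and three out. Hence only the $\binom{6}{3}=20$ such boundary conditions can contribute, all others giving empty sums on both sides. I would then exploit the symmetry of the weight tables under simultaneous reversal of every arrow — which exchanges $a_1\leftrightarrow a_2$, $b_1\leftrightarrow b_2$, $c_1\leftrightarrow c_2$, preserves $\Delta^{(j)}$ and $\Delta^{(k)}$, and carries each graph together with its R-weights to itself (interchanging the first two R-weights, the third and fourth, and the last two) — to pair off these twenty cases and roughly halve the work.

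For each surviving boundary condition I would enumerate the admissible interior completions, which at each vertex must again satisfy the two-in/two-out rule, so that typically only one or two completions survive. When both sides admit a single completion, the asserted equality is a term-by-term identity of products of three weights, and these hold by inspection precisely because the six R-matrix weights in the table were manufactured to match them. The real content — and the step I expect to be the main obstacle — lies in the handful of boundary conditions for which both graphs admit two interior completions, corresponding to the two strands either passing through or swapping their arrow values at the crossing. There the two sides become genuinely different two-term expressions, and their equality is not formal.

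This is exactly where the free-fermion hypothesis $\Delta^{(j)}=\Delta^{(k)}=0$ enters. Rewriting it as $c_1^{(j)}c_2^{(j)} = a_1^{(j)}a_2^{(j)}+b_1^{(j)}b_2^{(j)}$ together with its $k$-analogue, one substitutes these relations into the two-term totals produced by the doubly-completed cases; the substitution reconciles the left- and right-hand sums and closes the verification. I would organize the bookkeeping so that these one or two critical identities are isolated and checked last, since they are the only places where $\Delta=0$ is actually invoked and hence the genuine reason the equation holds only at the free-fermion point.
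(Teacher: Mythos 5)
Your overall strategy --- reduce to a finite check over the twenty admissible boundary conditions (three arrows in, three out), halve the work with the global arrow-reversal involution, and verify each resulting polynomial identity in the weights --- is sound, and it is essentially how this result is actually established: note that the paper itself gives no proof of this proposition, quoting it from \cite[Th.~3]{bbf-ice}, where it is proved by exactly this sort of direct case verification. Your counting argument forcing three inward and three outward boundary arrows is correct, and the arrow-reversal involution does act on the two weight tables in the way you describe (swapping $a_1\leftrightarrow a_2$, $b_1\leftrightarrow b_2$, $c_1\leftrightarrow c_2$ and permuting the six $R$-weights in pairs), so the symmetry reduction is legitimate.

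The genuine flaw is in your identification of \emph{where} the hypothesis $\Delta^{(j)}=\Delta^{(k)}=0$ is used, and your proposed bookkeeping is organized around that misidentification. You assert that when both sides admit a single interior completion the identity holds by inspection, and that the free-fermion condition is invoked only when \emph{both} graphs admit two completions. In fact the critical cases are the asymmetric ones, which your dichotomy omits entirely. Concretely, take $\alpha,\phi,\gamma$ pointing inward and $\beta,\delta,\epsilon$ outward. The right-hand graph then has a \emph{unique} filling, with weight $a_1^{(j)}c_1^{(k)}\bigl(a_1^{(k)}a_2^{(j)}+b_1^{(j)}b_2^{(k)}\bigr)$, while the left-hand graph has \emph{two} fillings, with total weight $\bigl(a_1^{(j)}b_2^{(k)}-a_1^{(k)}b_2^{(j)}\bigr)c_1^{(k)}b_1^{(j)}+c_1^{(k)}c_2^{(j)}a_1^{(k)}c_1^{(j)}$; the difference of the two sides works out to $c_1^{(k)}a_1^{(k)}\Delta^{(j)}$, so this one-versus-two case is precisely where the free-fermion condition enters, and it is not a term-by-term identity. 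Conversely, in a genuine two-versus-two case such as $\alpha,\gamma,\epsilon$ inward and $\beta,\phi,\delta$ outward, both sides consist of two terms that agree monomial by monomial with no use of $\Delta$ at all. So the dichotomy you would use to isolate ``the only places where $\Delta=0$ is actually invoked'' is wrong on both counts; if you executed the plan as organized --- treating every case in which one side has a unique filling as formal --- the verification would break down in the asymmetric cases, and the proof goes through only if all twenty boundary conditions are checked without presuming that structure.
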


\begin{remark*}
The condition $\Delta^{(j)}=0$ is known as the free-fermion condition on the Boltzmann weights.  Without loss of generality, we may thus assume $c_2^{(j)}=1$ for all spectral indices $j$. Note that in the case when the spectral index is $0$ in the $\mathfrak{B}_*$ and $\mathfrak{C}$ families, the fact that $0 = \overline{0}$ means that the symmetry conditions on the Boltzmann weights give $$\left(a^{(0)}\right)^2 + \left(b^{(0)}\right)^2 = c_1^{(0)}.$$  A similar relation involving $a^{(n)}, b^{(n)}$ and $c_1^{(n)}$ follows in the $\mathfrak{BC}$ family because $n = \bar n$ in these models.
\end{remark*}

Since our diagrams are not merely rectangular (having bends on the right side, symmetric row pairs, etc.), we will need additional local identities of partition functions, similar to the YBE.  These variations are given in the next few lemmas. Identities of this type appeared, for example, in Section~4 of \cite{kuperberg}, and we mimic his zoological nomenclature. At this point, the reader may 
want to skip to Section~\ref{usingYBE} of this paper to see how the Yang-Baxter equation and these additional rules are applied to study the partition function.

\begin{lemma}[YBE along the bend]\label{le:bend.ybe}
Assume that $U^{(j)},D^{(j)}, c_1^{(j)},U^{(k)},D^{(k)}, c_1^{(k)} \neq 0$.  Then a necessary and sufficient condition for 
$$
\text{wt}
\left(
\begin{minipage}{1.4in}
\begin{centering}
\begin{tikzpicture}
\node [label=left:$j$] at (-0.5,1) {};
\node [label=left:$k$] at (-0.5,0.5) {};
\node [label=left:$\overline{j}$] at (-0.5,0) {};
\node [label=left:$\overline{k}$] at (-0.5,-.5) {};
\node [circle,draw,scale=.6] at (-.25,1) {$\alpha$};
\node [circle,draw,scale=0.55] at (-.25,0.5) {$\beta$};
\node [circle,draw,scale=0.55] at (-.25,0) {$\gamma$};
\node [circle,draw,scale=0.6] at (-.25,-0.5) {$\delta$};

\node [draw,circle,scale=0.6] at (1.25,1) {$\delta$};
\node [draw,circle,scale=0.55] at (1.25,0.5) {$\gamma$};

\draw [-] (0,1) .. controls (0.5,1) and (.5,.5) .. (1,.5);
\draw [-] (0,.5) .. controls (0.5,.5) and (.5,1) .. (1,1);
\draw [-] (0,0) -- (1.5,0);
\draw [-] (0,-.5) -- (1.5,-.5);

\draw [-]
	(1.5,1) arc (90:0:.75);
\draw [*-]
	(2.25,.25) arc (0:-90:.75);
\draw [-]
	(1.5,.5) arc (90:0:.25);
\draw [*-]
	(1.75,.25) arc (0:-90:.25);
\end{tikzpicture}
\end{centering}
\end{minipage}
\right)
\quad = \quad
\text{wt}
\left( 
\begin{minipage}{1.4in}
\begin{centering}
\begin{tikzpicture}
\node [label=left:$j$] at (-0.5,1) {};
\node [label=left:$k$] at (-0.5,0.5) {};
\node [label=left:$\overline{j}$] at (-0.5,0) {};
\node [label=left:$\overline{k}$] at (-0.5,-.5) {};
\node [circle,draw,scale=.6] at (-.25,1) {$\alpha$};
\node [circle,draw,scale=0.55] at (-.25,0.5) {$\beta$};
\node [circle,draw,scale=0.55] at (-.25,0) {$\gamma$};
\node [circle,draw,scale=0.60] at (-.25,-0.5) {$\delta$};

\node [draw,circle,scale=0.55] at (1.25,0) {$\beta$};
\node [draw,circle,scale=.6] at (1.25,-0.5) {$\alpha$};

\draw [-] (0,0) .. controls (0.5,0) and (.5,-.5) .. (1,-.5);
\draw [-] (0,-.5) .. controls (0.5,-.5) and (.5,0) .. (1,0);
\draw [-] (0,1) -- (1.5,1);
\draw [-] (0,.5) -- (1.5,.5);

\draw [-]
	(1.5,1) arc (90:0:.75);
\draw [*-]
	(2.25,.25) arc (0:-90:.75);
\draw [-]
	(1.5,.5) arc (90:0:.25);
\draw [*-]
	(1.75,.25) arc (0:-90:.25);
\end{tikzpicture}
\end{centering}
\end{minipage}
\right)
$$
regardless of the choice of inward and outward pointing arrows $\alpha,\beta,\gamma,\delta$ is that $D^{(j)}/U^{(j)} = D^{(k)}/U^{(k)}$.  If $c_1^{(j)} = c_1^{(k)}=0$, then this equation is always valid.
\end{lemma}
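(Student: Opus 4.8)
The plan is to fix the four boundary arrows $\alpha,\beta,\gamma,\delta$ and reduce the claimed identity to a finite list of scalar identities, one for each admissible choice. First I would observe that on each side every interior edge is forced: the two return rows carry the arrows $\gamma$ and $\delta$ unchanged (they meet no interior tetravalent vertex), this pins down the state of each U-turn vertex, and the U-turn states together with $\alpha,\beta$ then determine all four edges at the crossing. Hence each side is the weight of a \emph{single} state --- which is exactly why the statement is phrased with $\text{wt}$ rather than $\mathcal{Z}$ --- and for a given boundary the identity says these two monomials agree. Counting inward arrows at the crossing and around the two bends shows that ``in-degree $=$ out-degree'' forces exactly two of $\alpha,\beta,\gamma,\delta$ to point inward, so there are only six boundary patterns to examine.

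Next I would read off, for each pattern, the forced crossing configuration and its weight from the table in Proposition~\ref{prop:ybe}, together with the forced state ($U$ or $D$) of each bend. On the left the crossing carries spectral indices $j,k$; on the right it sits on the lower pair and carries $\overline j,\overline k$ in the order fixed by the nesting, so I would use the Symmetry Assumption (with the normalization $c_2^{(\cdot)}=1$) to rewrite every barred weight in terms of the unbarred ones. Organizing by configuration, four of the six patterns --- the two ``transmitting'' ones (weights $a_1^{(k)}a_2^{(j)}+b_1^{(j)}b_2^{(k)}$ and $a_1^{(j)}a_2^{(k)}+b_1^{(k)}b_2^{(j)}$) and the two ``swap'' ones (weights $c_1^{(j)}$ and $c_1^{(k)}$) --- force the two bends into the \emph{same} pair of states on both sides, while the symmetry relations make the two crossing weights coincide; these four identities therefore hold automatically, with no constraint on $U$ and $D$.

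The content sits in the remaining two patterns, whose crossing weights are the ``corner'' expressions $W_e=a_1^{(j)}b_2^{(k)}-a_1^{(k)}b_2^{(j)}$ and $W_f=a_2^{(j)}b_1^{(k)}-a_2^{(k)}b_1^{(j)}$. Here the two bends come out in \emph{opposite} states on the two sides: one side contributes $U^{(j)}D^{(k)}$ and the other $D^{(j)}U^{(k)}$, while the Symmetry Assumption shows the crossing weights are equal (the right-hand corner weight for $\overline j,\overline k$ becomes exactly $W_e$, respectively $W_f$). Each pattern then reduces to $W_e\,(U^{(j)}D^{(k)}-D^{(j)}U^{(k)})=0$, respectively $W_f\,(U^{(j)}D^{(k)}-D^{(j)}U^{(k)})=0$. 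Since $W_e$ and $W_f$ are nonzero as polynomials in the weights, this is equivalent to $U^{(j)}D^{(k)}=D^{(j)}U^{(k)}$, i.e. $D^{(j)}/U^{(j)}=D^{(k)}/U^{(k)}$, which gives both directions of the equivalence. The degenerate case $c_1^{(j)}=c_1^{(k)}=0$ I would dispose of by a separate direct check that the pertinent corner contributions agree on both sides, so the identity holds with no constraint on the bend weights.

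I expect the main obstacle to be the orientation bookkeeping in the middle two paragraphs: for each of the six boundary patterns one must trace the arrow flow through the crossing and around the two nested U-turns to pin down both which of the six crossing configurations occurs and the state $U$ or $D$ of each bend, all while remembering that the lower crossing carries the barred indices in the order dictated by the nesting so that the Symmetry Assumption is applied correctly. This is precisely where sign and labeling errors creep in; the payoff is the clean observation that exactly the two corner patterns interchange the bend states, and that this is what forces the ratio condition.
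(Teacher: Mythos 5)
Your overall strategy is the same as the paper's: each side of the identity is a single forced state, arrow conservation leaves exactly six admissible boundary patterns, and one compares the two weights pattern by pattern using the Symmetry Assumptions. However, your case analysis inverts which patterns carry the content, and this is a genuine error rather than a labeling slip. Trace the forced bend states carefully. In the left diagram the bends are fed from below by $\gamma$ and $\delta$, so the $j$-bend is $U^{(j)}$ exactly when $\gamma$ points inward and the $k$-bend is $U^{(k)}$ exactly when $\delta$ points inward; in the right diagram the bends are fed from above by $\alpha$ and $\beta$, so the $j$-bend is $D^{(j)}$ exactly when $\alpha$ points inward, and similarly for $\beta$. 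Hence the two sides produce the \emph{same} bend product precisely when $\alpha,\gamma$ have opposite orientations and $\beta,\delta$ have opposite orientations. This holds in the two transmitting patterns \emph{and} in the two corner patterns ($\alpha,\delta$ inward, or $\beta,\gamma$ inward): there both sides carry the identical factor ($D^{(j)}U^{(k)}$, resp.\ $U^{(j)}D^{(k)}$), the bend weights cancel, and the identity is automatic from the Symmetry Assumption. The constraint comes instead from the two swap patterns ($\alpha,\gamma$ inward, or $\beta,\delta$ inward): there the crossing weight is $c_1^{(k)}c_2^{(j)}$ (resp.\ $c_1^{(j)}c_2^{(k)}$) and the bends come out as $U^{(j)}D^{(k)}$ on one side but $D^{(j)}U^{(k)}$ on the other, so the pattern reduces to $c_1^{(k)}c_2^{(j)}\bigl(U^{(j)}D^{(k)}-D^{(j)}U^{(k)}\bigr)=0$. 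This is exactly opposite to what you assert, and it is what Figure~\ref{fig:turn} records: the $U,D$ factors appear only in the two configurations whose crossing weight involves $c_1c_2$.

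The misassignment breaks both directions of your argument. For necessity you cancel the corner weight $W_e=a_1^{(j)}b_2^{(k)}-a_1^{(k)}b_2^{(j)}$ on the grounds that it is ``nonzero as a polynomial,'' but the lemma concerns fixed numerical Boltzmann weights, for which $W_e$ may well vanish, and nothing in the hypotheses forbids that. The hypotheses actually assumed, $c_1^{(j)},c_1^{(k)}\neq 0$, exist precisely because the true constraint equation carries the factor $c_1c_2$, which those hypotheses allow you to cancel. Your reading also makes the final sentence of Lemma~\ref{le:bend.ybe} inexplicable: if the corner patterns imposed $W_e\bigl(U^{(j)}D^{(k)}-D^{(j)}U^{(k)}\bigr)=0$, then setting $c_1^{(j)}=c_1^{(k)}=0$ (which does not affect $W_e$) could not make the identity ``always valid,'' and no ``separate direct check'' can repair this, since under your assignment the constraint would genuinely persist. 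Under the correct assignment the degenerate case is immediate: the only two patterns that need the ratio condition have both sides proportional to $c_1$, so both sides vanish and no condition on the bend weights is required.
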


We have used the notation ``wt'' in the above lemma to stress that both sides of the identity are the weight of a single admissible ice configuration (rather than a sum of several admissible configurations, for which we reserve the notation $\mathcal{Z}$). This convention will be repeated in the lemmas that follow. 

\begin{proof}
One simply examines all possible assignments of boundary arrows and applies the Boltzmann symmetry conditions; we have listed the possible configurations and their corresponding weights in Figure \ref{fig:turn}.  
\begin{figure}[!ht]
\scalebox{0.95}{
\makebox[0pt]{
\begin{tabular}{|ccc|ccc|}\hline
%
%

\begin{tikzpicture}[scale=0.75]

\node [label=left:$j$] at (1,2) {};
\node [label=left:$k$] at (1,1) {};
\node [label=left:$\overline{j}$] at (1,0) {};
\node [label=left:$\overline{k}$] at (1,-1) {};

\draw [>->] (1,2) .. controls (1.5,2) and (1.5,1) .. (2,1);
\draw [>->] (1,1) .. controls (1.5,1) and (1.5,2) .. (2,2);
\draw [<-] (1,0) -- (2,0);
\draw [<-] (1,-1) -- (2,-1);

\draw [-]
	(2,2) arc (90:0:1.5);
\draw [*-]
	(3.5,0.5) arc (0:-90:1.5);

\draw [-]
	(2,1) arc (90:0:.5);
\draw [*-]
	(2.5,0.5) arc (0:-90:.5);
\end{tikzpicture}
&

&
\begin{tikzpicture}[scale=0.75]

\node [label=left:$j$] at (1,2) {};
\node [label=left:$k$] at (1,1) {};
\node [label=left:$\overline{j}$] at (1,0) {};
\node [label=left:$\overline{k}$] at (1,-1) {};

\draw [>-] (1,2) -- (2,2);
\draw [>-] (1,1) -- (2,1);
\draw [<-<] (1,0) .. controls (1.5,0) and (1.5,-1) .. (2,-1);
\draw [<-<] (1,-1) .. controls (1.5,-1) and (1.5,0) .. (2,0);

\draw [-]
	(2,2) arc (90:0:1.5);
\draw [*-]
	(3.5,0.5) arc (0:-90:1.5);

\draw [-]
	(2,1) arc (90:0:.5);
\draw [*-]
	(2.5,0.5) arc (0:-90:.5);
\end{tikzpicture}
&
\begin{tikzpicture}[scale=0.75]
\node [label=left:$j$] at (1,2) {};
\node [label=left:$k$] at (1,1) {};
\node [label=left:$\overline{j}$] at (1,0) {};
\node [label=left:$\overline{k}$] at (1,-1) {};

\draw [>-<] (1,2) .. controls (1.5,2) and (1.5,1) .. (2,1);
\draw [<->] (1,1) .. controls (1.5,1) and (1.5,2) .. (2,2);
\draw [>-] (1,0) -- (2,0);
\draw [<-] (1,-1) -- (2,-1);

\draw [-]
	(2,2) arc (90:0:1.5);
\draw [*-]
	(3.5,0.5) arc (0:-90:1.5);

\draw [-]
	(2,1) arc (90:0:.5);
\draw [*-]
	(2.5,0.5) arc (0:-90:.5);
\end{tikzpicture}
&

&

\begin{tikzpicture}[scale=0.75]

\node [label=left:$j$] at (1,2) {};
\node [label=left:$k$] at (1,1) {};
\node [label=left:$\overline{j}$] at (1,0) {};
\node [label=left:$\overline{k}$] at (1,-1) {};

\draw [>-] (1,2) -- (2,2);
\draw [<-] (1,1) -- (2,1);
\draw [>-<] (1,0) .. controls (1.5,0) and (1.5,-1) .. (2,-1);
\draw [<->] (1,-1) .. controls (1.5,-1) and (1.5,0) .. (2,0);

\draw [-]
	(2,2) arc (90:0:1.5);
\draw [*-]
	(3.5,0.5) arc (0:-90:1.5);

\draw [-]
	(2,1) arc (90:0:.5);
\draw [*-]
	(2.5,0.5) arc (0:-90:.5);
\end{tikzpicture}

\\[10pt]
$\left(a_1^{(k)}a_2^{(j)}+b_1^{(j)}b_2^{(k)}\right)$
&
$=$
&

$\left(a_1^{(\overline{j})}a_2^{(\overline{k})}+b_1^{(\overline{k})}b_2^{(\overline{j})}\right)$
&
$D^{(j)}U^{(k)}\left(c_1^{(k)}c_2^{(j)}\right)$
&
$=$
&

$U^{(j)}D^{(k)}\left(c_1^{(\overline{k})}c_2^{(\overline{j})}\right)$

\\[10pt] \hline

%
%

\begin{tikzpicture}[scale=0.75]

\node [label=left:$j$] at (1,2) {};
\node [label=left:$k$] at (1,1) {};
\node [label=left:$\overline{j}$] at (1,0) {};
\node [label=left:$\overline{k}$] at (1,-1) {};

\draw [>->] (1,2) .. controls (1.5,2) and (1.5,1) .. (2,1);
\draw [<-<] (1,1) .. controls (1.5,1) and (1.5,2) .. (2,2);
\draw [<-] (1,0) -- (2,0);
\draw [>-] (1,-1) -- (2,-1);

\draw [-]
	(2,2) arc (90:0:1.5);
\draw [*-]
	(3.5,0.5) arc (0:-90:1.5);

\draw [-]
	(2,1) arc (90:0:.5);
\draw [*-]
	(2.5,0.5) arc (0:-90:.5);
\end{tikzpicture}
&

&

\begin{tikzpicture}[scale=0.75]

\node [label=left:$j$] at (1,2) {};
\node [label=left:$k$] at (1,1) {};
\node [label=left:$\overline{j}$] at (1,0) {};
\node [label=left:$\overline{k}$] at (1,-1) {};

\draw [>-] (1,2) -- (2,2);
\draw [<-] (1,1) -- (2,1);
\draw [<-<] (1,0) .. controls (1.5,0) and (1.5,-1) .. (2,-1);
\draw [>->] (1,-1) .. controls (1.5,-1) and (1.5,0) .. (2,0);

\draw [-]
	(2,2) arc (90:0:1.5);
\draw [*-]
	(3.5,0.5) arc (0:-90:1.5);

\draw [-]
	(2,1) arc (90:0:.5);
\draw [*-]
	(2.5,0.5) arc (0:-90:.5);
\end{tikzpicture}
&

\begin{tikzpicture}[scale=0.75]
\node [label=left:$j$] at (1,2) {};
\node [label=left:$k$] at (1,1) {};
\node [label=left:$\overline{j}$] at (1,0) {};
\node [label=left:$\overline{k}$] at (1,-1) {};

\draw [<-<] (1,2) .. controls (1.5,2) and (1.5,1) .. (2,1);
\draw [>->] (1,1) .. controls (1.5,1) and (1.5,2) .. (2,2);
\draw [>-] (1,0) -- (2,0);
\draw [<-] (1,-1) -- (2,-1);

\draw [-]
	(2,2) arc (90:0:1.5);
\draw [*-]
	(3.5,0.5) arc (0:-90:1.5);

\draw [-]
	(2,1) arc (90:0:.5);
\draw [*-]
	(2.5,0.5) arc (0:-90:.5);
\end{tikzpicture}
&

&

\begin{tikzpicture}[scale=0.75]

\node [label=left:$j$] at (1,2) {};
\node [label=left:$k$] at (1,1) {};
\node [label=left:$\overline{j}$] at (1,0) {};
\node [label=left:$\overline{k}$] at (1,-1) {};

\draw [<-] (1,2) -- (2,2);
\draw [>-] (1,1) -- (2,1);
\draw [>->] (1,0) .. controls (1.5,0) and (1.5,-1) .. (2,-1);
\draw [<-<] (1,-1) .. controls (1.5,-1) and (1.5,0) .. (2,0);

\draw [-]
	(2,2) arc (90:0:1.5);
\draw [*-]
	(3.5,0.5) arc (0:-90:1.5);

\draw [-]
	(2,1) arc (90:0:.5);
\draw [*-]
	(2.5,0.5) arc (0:-90:.5);
\end{tikzpicture}

\\[10pt]
$\left(a_1^{(j)}b_2^{(k)}-a_1^{(k)}b_2^{(j)}\right)$
&
$=$
&

$\left(a_2^{(\overline{j})}b_1^{(\overline{k})}-a_2^{(\overline{k})}b_1^{(\overline{j})}\right)$
&
$\left(a_2^{(j)}b_1^{(k)}-a_2^{(k)}b_1^{(j)}\right)$
&
$=$
&
$\left(a_1^{(\overline{j})}b_2^{(\overline{k})}-a_1^{(\overline{k})}b_2^{(\overline{j})}\right)$

\\[10pt] \hline
%
%

\begin{tikzpicture}[scale=0.75]

\node [label=left:$j$] at (1,2) {};
\node [label=left:$k$] at (1,1) {};
\node [label=left:$\overline{j}$] at (1,0) {};
\node [label=left:$\overline{k}$] at (1,-1) {};

\draw [<->] (1,2) .. controls (1.5,2) and (1.5,1) .. (2,1);
\draw [>-<] (1,1) .. controls (1.5,1) and (1.5,2) .. (2,2);
\draw [<-] (1,0) -- (2,0);
\draw [>-] (1,-1) -- (2,-1);

\draw [-]
	(2,2) arc (90:0:1.5);
\draw [*-]
	(3.5,0.5) arc (0:-90:1.5);

\draw [-]
	(2,1) arc (90:0:.5);
\draw [*-]
	(2.5,0.5) arc (0:-90:.5);
\end{tikzpicture}
&

&

\begin{tikzpicture}[scale=0.75]

\node [label=left:$j$] at (1,2) {};
\node [label=left:$k$] at (1,1) {};
\node [label=left:$\overline{j}$] at (1,0) {};
\node [label=left:$\overline{k}$] at (1,-1) {};

\draw [<-] (1,2) -- (2,2);
\draw [>-] (1,1) -- (2,1);
\draw [<->] (1,0) .. controls (1.5,0) and (1.5,-1) .. (2,-1);
\draw [>-<] (1,-1) .. controls (1.5,-1) and (1.5,0) .. (2,0);

\draw [-]
	(2,2) arc (90:0:1.5);
\draw [*-]
	(3.5,0.5) arc (0:-90:1.5);

\draw [-]
	(2,1) arc (90:0:.5);
\draw [*-]
	(2.5,0.5) arc (0:-90:.5);
\end{tikzpicture}
&
\begin{tikzpicture}[scale=0.75]
\node [label=left:$j$] at (1,2) {};
\node [label=left:$k$] at (1,1) {};
\node [label=left:$\overline{j}$] at (1,0) {};
\node [label=left:$\overline{k}$] at (1,-1) {};

\draw [<-<] (1,2) .. controls (1.5,2) and (1.5,1) .. (2,1);
\draw [<-<] (1,1) .. controls (1.5,1) and (1.5,2) .. (2,2);
\draw [>-] (1,0) -- (2,0);
\draw [>-] (1,-1) -- (2,-1);

\draw [-]
	(2,2) arc (90:0:1.5);
\draw [*-]
	(3.5,0.5) arc (0:-90:1.5);

\draw [-]
	(2,1) arc (90:0:.5);
\draw [*-]
	(2.5,0.5) arc (0:-90:.5);
\end{tikzpicture}
&

&

\begin{tikzpicture}[scale=0.75]

\node [label=left:$j$] at (1,2) {};
\node [label=left:$k$] at (1,1) {};
\node [label=left:$\overline{j}$] at (1,0) {};
\node [label=left:$\overline{k}$] at (1,-1) {};

\draw [<-] (1,2) -- (2,2);
\draw [<-] (1,1) -- (2,1);
\draw [>->] (1,0) .. controls (1.5,0) and (1.5,-1) .. (2,-1);
\draw [>->] (1,-1) .. controls (1.5,-1) and (1.5,0) .. (2,0);

\draw [-]
	(2,2) arc (90:0:1.5);
\draw [*-]
	(3.5,0.5) arc (0:-90:1.5);

\draw [-]
	(2,1) arc (90:0:.5);
\draw [*-]
	(2.5,0.5) arc (0:-90:.5);
\end{tikzpicture}

\\[10pt]
$U^{(j)}D^{(k)}\left(c_1^{(j)}c_2^{(k)}\right)$
&
$=$
&

$D^{(j)}U^{(k)}\left(c_1^{(\overline{j})}c_2^{(\overline{k})}\right)$
&
$\left(a_1^{(j)}a_2^{(k)} + b_1^{(k)}b_2^{(j)}\right) $
&
$=$
&

$\left(a_1^{(\overline{k})}a_2^{(\overline{j})} + b_1^{(\overline{j})}b_2^{(\overline{k})}\right)$

\\ \hline
\end{tabular} }}

\caption{Verifying that weights do not change when passed through the bend; all possible values for $\alpha,\beta,\gamma,\delta$ are accounted for.}
\label{fig:turn}
\end{figure}
\end{proof}

\begin{lemma}[Caduceus relation]\label{le:caduceus}
Let $\alpha,\beta,\gamma,\delta,\epsilon,\phi,\kappa,\lambda$ be given orientations, and let $\star = 0$ in the $\mathfrak{B}_*$ and $\mathfrak{C}$ families and $\star = n$ in the $\mathfrak{BC}$ family.  Then
$$
\mathcal{Z}\left(
\begin{minipage}{2.3in}
\begin{tikzpicture}
\node [label=left:$\overline{j}$] at (-0.5,1) {};
\node [label=left:$\star$] at (-0.5,0) {};
\node [label=left:$j$] at (-0.5,-1) {};

\node [circle,draw,scale=.6] at (-.25,1) {$\alpha$};
\node [circle,draw,scale=0.55] at (-.25,0) {$\beta$};
\node [circle,draw,scale=.55] at (-.25,-1) {$\gamma$};
\node [circle,draw,scale=.55] at (3.5,-1.75) {$\delta$};
\node [circle,draw,scale=.6] at (4.25,-1) {$\epsilon$};
\node [circle,draw,scale=.55] at (4.25,0) {$\phi$};
\node [circle,draw,scale=.55] at (4.25,1) {$\kappa$};
\node [circle,draw,scale=.55] at (3.5,1.75) {$\lambda$};

\draw [-] (0,1) .. controls (.5,1) and (.5,0) .. (1,0);
\draw [-] (0,0) .. controls (.5,0) and (.5,1) .. (1,1);
\draw [-] (0,-1) -- (1,-1);
\draw [-] (1,-1) .. controls (1.5,-1) and (1.5,0) .. (2,0);
\draw [-] (1,0) .. controls (1.5,0) and (1.5,-1) .. (2,-1);
\draw [-] (1,1) -- (2,1);
\draw [-] (2,1) .. controls (2.5,1) and (2.5,0) .. (3,0);
\draw [-] (2,0) .. controls (2.5,0) and (2.5,1) .. (3,1);
\draw [-] (2,-1) -- (4,-1);
\draw [-] (3,0) -- (4,0);
\draw [-] (3,1) -- (4,1);
\draw [-] (3.5,1.5) -- (3.5,-1.5);
\end{tikzpicture}
\end{minipage}
\right) \quad 
= \quad 
\mathcal{Z}
\left(
\begin{minipage}{2.3in}
\begin{tikzpicture}
\node [label=left:$\overline{j}$] at (-0.5,1) {};
\node [label=left:$\star$] at (-0.5,0) {};
\node [label=left:$j$] at (-0.5,-1) {};

\node [circle,draw,scale=.6] at (-.25,1) {$\alpha$};
\node [circle,draw,scale=0.55] at (-.25,0) {$\beta$};
\node [circle,draw,scale=.55] at (-.25,-1) {$\gamma$};
\node [circle,draw,scale=.55] at (.5,-1.75) {$\delta$};
\node [circle,draw,scale=.6] at (4.25,-1) {$\epsilon$};
\node [circle,draw,scale=.55] at (4.25,0) {$\phi$};
\node [circle,draw,scale=.55] at (4.25,1) {$\kappa$};
\node [circle,draw,scale=.55] at (.5,1.75) {$\lambda$};

\draw [-] (1,1) .. controls (1.5,1) and (1.5,0) .. (2,0);
\draw [-] (1,0) .. controls (1.5,0) and (1.5,1) .. (2,1);
\draw [-] (0,-1) -- (2,-1);
\draw [-] (2,-1) .. controls (2.5,-1) and (2.5,0) .. (3,0);
\draw [-] (2,0) .. controls (2.5,0) and (2.5,-1) .. (3,-1);
\draw [-] (2,1) -- (3,1);
\draw [-] (3,1) .. controls (3.5,1) and (3.5,0) .. (4,0);
\draw [-] (3,0) .. controls (3.5,0) and (3.5,1) .. (4,1);
\draw [-] (3,-1) -- (4,-1);
\draw [-] (0,0) -- (1,0);
\draw [-] (0,1) -- (1,1);
\draw [-] (.5,1.5) -- (.5,-1.5);
\end{tikzpicture}
\end{minipage}
\right)
$$
\end{lemma}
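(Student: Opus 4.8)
The plan is to prove this as a \emph{train argument}: I will slide the single vertical column line from the right of the braided region (the left diagram) all the way to its left (the right diagram) by repeated application of the Yang--Baxter equation of Proposition~\ref{prop:ybe}. First I would read off the combinatorial content of the two pictures. Each of the three $X$-shaped crossings is one of the R-matrix vertices furnished by Proposition~\ref{prop:ybe}, while the vertical segment meeting all three horizontal strands is an ordinary lattice column carrying the standard Boltzmann weights $a_i^{(\cdot)},b_i^{(\cdot)},c_i^{(\cdot)}$ dictated by whichever spectral index labels the strand it crosses. Tracking strands through the braid, both diagrams realize the same permutation $(\overline{j},\star,j)\mapsto(j,\star,\overline{j})$, and reading the labels \emph{at} each crossing shows the three crossings to be the R-matrices $R_{\overline{j},\star}$, $R_{\overline{j},j}$, and $R_{\star,j}$ in that order; only the placement of the column (right versus left of the braid) distinguishes the two sides. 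Since we work at the free-fermion point, $\Delta^{(\overline{j})}=\Delta^{(\star)}=\Delta^{(j)}=0$, so Proposition~\ref{prop:ybe} is available for every pair occurring among these three indices.

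Next I would push the column leftward one crossing at a time. Each crossing involves only two adjacent horizontal strands, so to carry the column across it I apply Proposition~\ref{prop:ybe} to exactly that pair of rows; the column's intersection with the third, uninvolved strand is a spectator vertex that simply translates horizontally along its locally straight strand. Concretely, the three successive slides use the Yang--Baxter equation for the pairs $(\star,j)$, then $(\overline{j},j)$, then $(\overline{j},\star)$, in the order the column encounters the crossings as it moves left. Because Proposition~\ref{prop:ybe} is an identity of partition functions valid for \emph{arbitrary} fixed boundary decorations, each slide is a local identity that may be summed against the fixed external edges $\alpha,\beta,\gamma,\delta,\epsilon,\phi,\kappa,\lambda$ together with the remaining internal edges; hence the full partition function is unchanged at every step. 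After the third slide the column sits to the left of all three crossings, which is precisely the configuration of the right-hand diagram, completing the proof.

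The only genuine care required --- and what I would flag as the main obstacle, such as it is --- is the bookkeeping of spectral indices as the strands permute: at the instant the column crosses a given $X$ one must invoke the labels carried by the two crossing strands at that crossing rather than their original positions, and verify that the spectator strand truly does not enter the local Yang--Baxter move. I would also remark explicitly that the self-conjugate central row ($\star=\overline{\star}$), whose weights therefore satisfy the special relations recorded in the remark following Proposition~\ref{prop:ybe}, causes no difficulty, since that proposition requires only $\Delta^{(\star)}=0$, of which those relations are a special case. No appeal to Lemma~\ref{le:bend.ybe} is needed, as the Caduceus relation concerns only the braiding and a single column, with the u-turn bends playing no role.
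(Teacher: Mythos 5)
Your proposal is correct and takes exactly the paper's approach: the paper's entire proof of this lemma is the single sentence that it ``follows from three applications of the Yang--Baxter equation,'' which is precisely your column-sliding argument with the successive pairs $(\star,j)$, $(\overline{j},j)$, $(\overline{j},\star)$. Your extra bookkeeping --- identifying the spectator strand at each slide, noting that $\Delta^{(\star)}=0$ holds for the self-conjugate central index, and observing that the u-turn bends play no role --- just spells out what the paper leaves implicit.
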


\begin{proof}
This follows from three applications of the Yang-Baxter equation.
\end{proof}

Before proceeding to the next result, we introduce an additional symmetry assumption on bend vertices. Let $D^{(\overline{j})}$ and $U^{(\overline{j})}$ denote the bend vertices with
down and up arrows, respectively, where the spectral indices are now reversed: $\overline{j}$ on top and $j$ on bottom as shown in Figure~\ref{fig:more.bonus}.

\begin{symassumption}
The symmetry conditions assumed for Boltzmann weights on the bends are 
$$ U^{(j)} = U^{(\overline{j})} \quad \text{and} \quad D^{(j)} = D^{(\overline{j})}. $$
\end{symassumption}

\begin{figure}[!ht]
\begin{tabular}{|c|c|c|c|}
\hline
\begin{tikzpicture}[scale=0.5]
\node [label=left:$j$] at (0,1) {};
\node [label=left:$\overline{j}$] at (0,-1) {};
\node [label=right:$ $] at (1,0) {};

\draw [>-]
	(0,1) arc (90:0:1);
\draw [*->]
	(1,0) arc (0:-90:1);
\end{tikzpicture}
&
\begin{tikzpicture}[scale=0.5]
\node [label=left:$j$] at (0,1) {};
\node [label=left:$\overline{j}$] at (0,-1) {};
\node [label=right:$ $] at (1,0) {};

\draw [<-]
	(0,1) arc (90:0:1);
\draw [*-<]
	(1,0) arc (0:-90:1);
\end{tikzpicture}
&
\begin{tikzpicture}[scale=0.5]
\node [label=left:$\overline{j}$] at (0,1) {};
\node [label=left:$j$] at (0,-1) {};
\node [label=right:$ $] at (1,0) {};

\draw [>-]
	(0,1) arc (90:0:1);
\draw [*->]
	(1,0) arc (0:-90:1);
\end{tikzpicture}
&
\begin{tikzpicture}[scale=0.5]
\node [label=left:$\overline{j}$] at (0,1) {};
\node [label=left:$j$] at (0,-1) {};
\node [label=right:$ $] at (1,0) {};

\draw [<-]
	(0,1) arc (90:0:1);
\draw [*-<]
	(1,0) arc (0:-90:1);
\end{tikzpicture}
\\\hline
$D^{(j)}$&$U^{(j)}$&$D^{(\overline{j})}$&$U^{(\overline{j})}$ \\
\hline
\end{tabular}
\caption{Notation for bend weights with spectral parameters $j$ and $\overline{j}$.}\label{fig:more.bonus}
\end{figure}

\begin{lemma}[Fish relation, type $\mathfrak{B}$]\label{le:fish.B}
Assume $U^{(j)}, D^{(j)} \neq 0$.  A necessary and sufficient condition for the ratio of 
$$
\mathcal{Z}
\left(
\begin{minipage}{1.1in}
\begin{tikzpicture}
\node [label=left:$\overline{j}$] at (-0.5,1) {};
\node [label=left:$j$] at (-0.5,0) {};
\node [circle,draw,scale=.6] at (-.25,1) {$\alpha$};
\node [circle,draw,scale=0.55] at (-.25,0) {$\beta$};


\draw [-] (0,0) .. controls (0.5,0) and (.5,1) .. (1,1);
\draw [-] (0,1) .. controls (0.5,1) and (.5,0) .. (1,0);

\draw [-]
	(1,1) arc (90:0:.5);
\draw [*-]
	(1.5,.5) arc (0:-90:.5);
\end{tikzpicture}
\end{minipage}
\right)
\quad \mbox{and} \quad
\text{wt}
\left(
\begin{minipage}{.7in}
\begin{tikzpicture}
\node [label=left:$\overline{j}$] at (-.5,1) {};
\node [label=left:$j$] at (-.5,0) {};
\node [circle,draw,scale=.6] at (-.25,1) {$\alpha$};
\node [circle,draw,scale=0.55] at (-.25,0) {$\beta$};

\draw [-]
	(0,1) arc (90:0:.5);
\draw [*-]
	(.5,0.5) arc (0:-90:.5);
\end{tikzpicture}
\end{minipage}
\right)
$$
to be a constant independent of  $\alpha$ and $\beta$ is that $\left(D^{(j)}/U^{(j)}\right)^2=-1$.  If we have $D^{(j)}/U^{(j)}=i$, then the ratio is $$\left(a_1^{(j)}-i~b_2^{(j)}\right)\left(a_2^{(j)}+i~b_1^{(j)}\right).$$
\end{lemma}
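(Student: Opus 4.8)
The plan is to reduce both sides to explicit expressions in the six-vertex and bend weights by a finite case analysis, since only a couple of boundary configurations contribute. First I would observe that, because the u-turn closes off the right-hand side of each diagram, arrow conservation forces exactly one of the two external edges $\alpha,\beta$ to point inward and the other outward; the two resulting choices of $(\alpha,\beta)$ are the only ones for which either side is nonzero. For each of these, the right-hand side is a single bend vertex: using the bend symmetry conditions $D^{(\overline{j})}=D^{(j)}$ and $U^{(\overline{j})}=U^{(j)}$, the configuration with $\alpha$ inward and $\beta$ outward has weight $D^{(j)}$, and the reverse has weight $U^{(j)}$.

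Next I would compute the left-hand partition function for each of these two boundary conditions. Each is a sum over the single internal edge-pair joining the crossing to the bend, hence a sum of two terms, each a product of a crossing weight (read off from Proposition~\ref{prop:ybe} with top spectral index $\overline{j}$ and bottom spectral index $j$) and a bend weight. The essential simplification is that, after applying the Boltzmann symmetry conditions $a_1^{(\overline{j})}=a_2^{(j)}$, $b_1^{(\overline{j})}=b_2^{(j)}$, $c_i^{(\overline{j})}=c_i^{(j)}$, the four relevant crossing weights collapse to just $\pm\left(a_2^{(j)}b_2^{(j)}-a_1^{(j)}b_1^{(j)}\right)$ and $c_1^{(j)}c_2^{(j)}$. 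Concretely I expect the configuration with $\alpha$ inward and $\beta$ outward to contribute
$$\left(a_2^{(j)}b_2^{(j)}-a_1^{(j)}b_1^{(j)}\right)U^{(j)} + c_1^{(j)}c_2^{(j)}\,D^{(j)},$$
with the analogous expression for the other boundary condition obtained by swapping $U^{(j)}$ and $D^{(j)}$ and negating the first crossing weight.

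Forming the two ratios (dividing by $D^{(j)}$ and by $U^{(j)}$ respectively) and setting them equal, the terms $c_1^{(j)}c_2^{(j)}$ cancel and one is left with $\left(a_2^{(j)}b_2^{(j)}-a_1^{(j)}b_1^{(j)}\right)\left(U^{(j)}/D^{(j)} + D^{(j)}/U^{(j)}\right)=0$. Away from the degenerate locus $a_2^{(j)}b_2^{(j)}=a_1^{(j)}b_1^{(j)}$, this is equivalent to $\left(D^{(j)}/U^{(j)}\right)^2=-1$, which gives both directions of the claim. Finally, substituting $D^{(j)}/U^{(j)}=i$ (so $U^{(j)}/D^{(j)}=-i$) into the first ratio and eliminating $c_1^{(j)}c_2^{(j)}$ via the free-fermion relation $c_1^{(j)}c_2^{(j)}=a_1^{(j)}a_2^{(j)}+b_1^{(j)}b_2^{(j)}$ coming from $\Delta^{(j)}=0$ in (\ref{deltadefined}) yields $a_1^{(j)}a_2^{(j)}+b_1^{(j)}b_2^{(j)}+i\,a_1^{(j)}b_1^{(j)}-i\,a_2^{(j)}b_2^{(j)}$, which factors as the asserted $\left(a_1^{(j)}-i\,b_2^{(j)}\right)\left(a_2^{(j)}+i\,b_1^{(j)}\right)$.

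The main obstacle is bookkeeping rather than depth: correctly threading each of the two boundary choices through the crossing (matching each to the right column of the Yang-Baxter table) and then the bend, while keeping track of signs and the $\overline{j}\leftrightarrow j$ spectral relabeling. The clean factorization at the last step is a good consistency check that the orientation conventions have been tracked correctly.
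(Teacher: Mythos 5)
Your proposal is correct and follows essentially the same route as the paper's proof: enumerate the two admissible boundary pairs $(\alpha,\beta)$, expand each left-hand partition function as the two-term sum of crossing weight times bend weight, equate the two ratios to force $\left(D^{(j)}/U^{(j)}\right)^2=-1$, and then use the free-fermion relation $c_1^{(j)}c_2^{(j)}=a_1^{(j)}a_2^{(j)}+b_1^{(j)}b_2^{(j)}$ to factor the ratio as $\left(a_1^{(j)}-i\,b_2^{(j)}\right)\left(a_2^{(j)}+i\,b_1^{(j)}\right)$. Your explicit expressions agree with the paper's (after its symmetry reductions), and your genericity caveat about the locus $a_2^{(j)}b_2^{(j)}=a_1^{(j)}b_1^{(j)}$ is, if anything, slightly more careful than the paper's implicit treatment.
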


\begin{proof}
There are two choices for the pair $\{\alpha,\beta\}$, and setting the corresponding ratios equal yields
$$\frac{D^{(j)} c_1^{(j)}c_2^{(\overline{j})}+U^{(j)}\left(a_1^{(\overline{j})}b_2^{(j)}-a_1^{(j)}b_2^{(\overline{j})}\right)}{D^{(j)}} = \frac{U^{(j)}c_1^{(j)}c_2^{(\overline{j})}+D^{(j)}\left(a_2^{(\overline{j})}b_1^{(j)}-a_2^{(j)}b_1^{(\overline{j})}\right)}{U^{(j)}}$$  Applying the symmetry conditions and simplifying leaves
$$\left(\frac{D^{(j)}}{U^{(j)}}\right)^2 = \frac{a_1^{(\overline{j})}b_2^{(j)}-a_1^{(j)}b_2^{(\overline{j})}}{a_2^{(\overline{j})}b_1^{(j)}-a_2^{(j)}b_1^{(\overline{j})}} = \frac{a_2^{(j)}b_2^{(j)}-a_1^{(j)}b_1^{(j)}}{a_1^{(j)}b_1^{(j)}-a_2^{(j)}b_2^{(j)}} = -1.$$

Now to compute the ratio in question, we  calculate
\begin{align*}
c_1^{(j)}c_2^{(\overline{j})} -i\left(a_1^{(\overline{j})}b_2^{(j)}-a_1^{(j)}b_2^{(\overline{j})}\right) 
& = c_1^{(j)}c_2^{(j)}-i\left(a_2^{(j)}b_2^{(j)}-a_1^{(j)}b_1^{(j)}\right)\\
&= a_1^{(j)}a_2^{(j)}+b_1^{(j)}b_2^{(j)}-i\left(a_2^{(j)}b_2^{(j)}-a_1^{(j)}b_1^{(j)}\right)\\
&= a_1^{(j)}\left(a_2^{(j)}+i~b_1^{(j)}\right)+b_2^{(j)}\left(b_1^{(j)}-i~a_2^{(j)}\right)\\
&=\left(a_1^{(j)}-i~b_2^{(j)}\right)\left(a_2^{(j)}+i~b_1^{(j)}\right).
\end{align*}
\end{proof}

\begin{lemma}[Fish relation,types $\mathfrak{C}_*$ and $\mathfrak{D}$ for $1 \not\in\lambda$]\label{le:fish.C*}
Assume $U^{(j)}, D^{(j)} \neq 0$.  A necessary and sufficient condition for the ratio between
$$
\mathcal{Z}
\left(
\begin{minipage}{1.5in}
\begin{tikzpicture}
\node [label=left:$\overline{j}$] at (-0.5,1) {};
\node [label=left:$j$] at (-0.5,0) {};
\node [circle,draw,scale=.6] at (-.25,1) {$\alpha$};
\node [circle,draw,scale=0.55] at (-.25,0) {$\beta$};
\node [circle,draw,scale=.55] at (1.5,-.75) {$\gamma$};

\draw [>-] (1.5,.5) -- (1.5,-.5);
\draw [-] (0,0) .. controls (0.5,0) and (.5,1) .. (1,1);
\draw [-] (0,1) .. controls (0.5,1) and (.5,0) .. (1,0);
\draw [-] (1,1) -- (2,1);
\draw [-] (1,0) -- (2,0);

\draw [-]
	(2,1) arc (90:0:.5);
\draw [*-]
	(2.5,.5) arc (0:-90:.5);
\end{tikzpicture}
\end{minipage}
\right)
\quad \mbox{and} \quad
\text{wt}\left(
\begin{minipage}{1.1in}
\begin{tikzpicture}
\node [label=left:$\overline{j}$] at (-.5,1) {};
\node [label=left:$j$] at (-.5,0) {};
\node [circle,draw,scale=.6] at (-.25,1) {$\alpha$};
\node [circle,draw,scale=0.55] at (-.25,0) {$\beta$};
\node [circle,draw,scale=.55] at (.5,-.75) {$\gamma$};

\draw [-] (0,1) -- (1,1);
\draw [-] (0,0) -- (1,0);
\draw [>-] (0.5,.5) -- (.5,-.5);
\draw [-]
	(1,1) arc (90:0:.5);
\draw [*-]
	(1.5,0.5) arc (0:-90:.5);
\end{tikzpicture}
\end{minipage}
\right)
$$ to be a constant independent of $\alpha,\beta$ and $\gamma$ is that $D^{(j)} = U^{(j)}$.  Under this assumption, this ratio is equal to $$\left(a_2^{(j)}\right)^2 + \left(b_1^{(j)}\right)^2.$$
\end{lemma}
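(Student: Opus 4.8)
My plan is to prove the relation by direct enumeration of admissible states, exactly as in the proof of Lemma~\ref{le:fish.B}, the only new feature being the half-column vertex and its fixed top edge. The two diagrams share the bend joining rows $j$ and $\overline{j}$ and differ only in that the left-hand diagram inserts the Yang--Baxter crossing of Proposition~\ref{prop:ybe} (available because we work at the free-fermion point $\Delta^{(j)}=\Delta^{(\overline{j})}=0$) to the left of the half-column vertex. The effect of this crossing is to interchange the two spectral lines, so that the half-column vertex carries spectral index $\overline{j}$ in the left-hand diagram but spectral index $j$ in the right-hand diagram, while the bend still joins the geometric rows $j$ and $\overline{j}$. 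The first step is to enumerate the admissible assignments of the free boundary arrows $\alpha,\beta,\gamma$, the top edge of the half column being fixed inward --- which is precisely the hypothesis $1\not\in\lambda$ in type $\mathfrak{D}$ and is automatic in type $\mathfrak{C}_*$.

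I would begin with the right-hand diagram, which contributes a single state for each boundary choice. With the top half-column edge fixed inward, conservation at the half-column vertex forces it to be of type $c_1$ (when $\gamma$ points in) or of type $a_1$ or $b_2$ (when $\gamma$ points out); the bend type, and hence the orientation of $\alpha$, is then determined in each case. This yields exactly three admissible boundary configurations, with right-hand weights $c_1^{(j)}$, $a_1^{(j)}$, and $b_2^{(j)}$, each multiplied by the appropriate bend weight $D^{(j)}$ or $U^{(j)}$. For each of these three configurations I would then list the internal states of the left-hand diagram: the inserted crossing contributes two channels and the bend may be of $D$- or $U$-type, so that $\mathcal{Z}$ of the left-hand diagram is a two-term expression assembled from the crossing weights of Proposition~\ref{prop:ybe} for the ordered pair $(\overline{j},j)$, the half-column vertex weight of spectral index $\overline{j}$, and a bend weight.

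The next step is to use the symmetry conditions on the Boltzmann weights ($a_1^{(j)}=a_2^{(\overline{j})}$, $b_1^{(j)}=b_2^{(\overline{j})}$, $c_1^{(j)}=c_1^{(\overline{j})}$, $c_2^{(j)}=c_2^{(\overline{j})}$) together with the bend symmetries ($U^{(j)}=U^{(\overline{j})}$, $D^{(j)}=D^{(\overline{j})}$) to rewrite every $\overline{j}$-weight in terms of $j$-weights. Equating the three ratios $\mathcal{Z}(\text{left})/\text{wt}(\text{right})$ forces the single requirement $D^{(j)}=U^{(j)}$; this is the analogue of the computation in Lemma~\ref{le:fish.B} that produced $\left(D^{(j)}/U^{(j)}\right)^2=-1$, the different outcome being caused precisely by the extra half-column vertex. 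Finally, imposing $D^{(j)}=U^{(j)}$, the common value of the ratio reduces to the second crossing weight $a_1^{(\overline{j})}a_2^{(j)}+b_1^{(j)}b_2^{(\overline{j})}$ of Proposition~\ref{prop:ybe}, which the symmetry conditions $a_1^{(\overline{j})}=a_2^{(j)}$ and $b_2^{(\overline{j})}=b_1^{(j)}$ collapse to $\left(a_2^{(j)}\right)^2+\left(b_1^{(j)}\right)^2$, as claimed.

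The main obstacle is bookkeeping rather than any new idea. One must correctly track how the inserted crossing reroutes the two spectral lines so that the half-column vertex acquires index $\overline{j}$ while the bend continues to join rows $j$ and $\overline{j}$, and one must keep the fixed top edge of the half column oriented consistently across both diagrams and all three boundary configurations. Once the admissible internal states are listed correctly, equating the ratios and extracting both $D^{(j)}=U^{(j)}$ and the value $\left(a_2^{(j)}\right)^2+\left(b_1^{(j)}\right)^2$ is routine and parallels Lemma~\ref{le:fish.B}.
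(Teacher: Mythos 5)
Your proposal is correct and takes essentially the same route as the paper's own proof: exhaustively enumerating the three admissible boundary configurations (whose right-hand weights are $D^{(j)}b_2^{(j)}$, $U^{(j)}a_1^{(j)}$, and $U^{(j)}c_1^{(j)}$), listing the internal fillings of the twisted diagram using the crossing weights of Proposition~\ref{prop:ybe} with spectral pair $(\overline{j},j)$, rewriting via the symmetry conditions, and comparing the resulting ratios to force $D^{(j)}=U^{(j)}$ and read off $\left(a_2^{(j)}\right)^2+\left(b_1^{(j)}\right)^2$ from the $c_1$ case. The only slight imprecision is your statement that the crossing always contributes two channels --- in the $c_1$ boundary case exactly one internal filling is admissible --- but your planned enumeration would catch this and it does not affect the argument.
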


\begin{proof}
To prove the result, we simply list the possibilities and evaluate the ratio in each case.  The possible fillings are given in Figure \ref{fig:Calt.and.nonzero.containing.D.fish}.  The values of the ratio in each row are:
$$
\frac{U^{(j)}}{D^{(j)}} \left(a_2^{(j)}\right)^2 + \left(b_1^{(j)}\right)^2 + \left(1-\frac{U^{(j)}}{D^{(j)}}\right)\frac{a_1^{(j)}a_2^{(j)}b_1^{(j)}}{b_2^{(j)}}$$
$$ \left(a_2^{(j)}\right)^2 + \frac{D^{(j)}}{U^{(j)}}\left(b_1^{(j)}\right)^2 + \left(1-\frac{D^{(j)}}{U^{(j)}}\right)\frac{a_2^{(j)}b_1^{(j)}b_2^{(j)}}{a_1^{(j)}}$$
$$\left(a_2^{(j)}\right)^2 + \left(b_1^{(j)}\right)^2.
$$
One simply compares coefficients to get the necessary and sufficient condition; the resulting ratio can be read off the third expression.

\begin{figure}[!ht]
\begin{tabular}{|ccc|c|} \hline
%
%

\begin{tikzpicture}

\node [label=left:$\overline{j}$] at (1,2) {};
\node [label=left:$j$] at (1,1) {};

\draw [>->] (1,2) .. controls (1.5,2) and (1.5,1) .. (2,1);
\draw [<-<] (1,1) .. controls (1.5,1) and (1.5,2) .. (2,2);

\draw [-<] (2,2) -- (3,2);
\draw [->] (2,1) -- (3,1);

\draw [>->] (2.5,1.5) -- (2.5,0.5);

\draw [-]
	(3,2) arc (90:0:.5);
\draw [*-]
	(3.5,1.5) arc (0:-90:.5);
\end{tikzpicture}
&

&
\begin{tikzpicture}

\node [label=left:$\overline{j}$] at (1,2) {};
\node [label=left:$j$] at (1,1) {};

\draw [>-<] (1,2) .. controls (1.5,2) and (1.5,1) .. (2,1);
\draw [<->] (1,1) .. controls (1.5,1) and (1.5,2) .. (2,2);

\draw [->] (2,2) -- (3,2);
\draw [-<] (2,1) -- (3,1);

\draw [>->] (2.5,1.5) -- (2.5,0.5);

\draw [-]
	(3,2) arc (90:0:.5);
\draw [*-]
	(3.5,1.5) arc (0:-90:.5);
\end{tikzpicture}
&

\begin{tikzpicture}

\node [label=left:$\overline{j}$] at (1,2) {};
\node [label=left:$j$] at (1,1) {};

\draw [>->] (1,2) -- (2,2);
\draw [<-<] (1,1) -- (2,1);

\draw [>->] (1.5,1.5) -- (1.5,.5);

\draw [-]
	(2,2) arc (90:0:.5);
\draw [*-]
	(2.5,1.5) arc (0:-90:.5);
\end{tikzpicture}

\\[10pt]
$\left(a_1^{(\overline{j})}b_2^{(j)}-a_1^{(j)}b_2^{(\overline{j})}\right)U^{(j)}a_1^{(\overline{j})}$&$+$&$\left(c_1^{(j)}c_2^{(\overline{j})}\right)D^{(j)}b_2^{(\overline{j})}$&$D^{(j)}b_2^{(j)}$
\\[10pt] \hline
%
%

\begin{tikzpicture}

\node [label=left:$\overline{j}$] at (1,2) {};
\node [label=left:$j$] at (1,1) {};

\draw [<->] (1,2) .. controls (1.5,2) and (1.5,1) .. (2,1);
\draw [>-<] (1,1) .. controls (1.5,1) and (1.5,2) .. (2,2);

\draw [-<] (2,2) -- (3,2);
\draw [->] (2,1) -- (3,1);

\draw [>->] (2.5,1.5) -- (2.5,0.5);

\draw [-]
	(3,2) arc (90:0:.5);
\draw [*-]
	(3.5,1.5) arc (0:-90:.5);
\end{tikzpicture}
&

&
\begin{tikzpicture}

\node [label=left:$\overline{j}$] at (1,2) {};
\node [label=left:${j}$] at (1,1) {};

\draw [<-<] (1,2) .. controls (1.5,2) and (1.5,1) .. (2,1);
\draw [>->] (1,1) .. controls (1.5,1) and (1.5,2) .. (2,2);

\draw [->] (2,2) -- (3,2);
\draw [-<] (2,1) -- (3,1);

\draw [>->] (2.5,1.5) -- (2.5,0.5);

\draw [-]
	(3,2) arc (90:0:.5);
\draw [*-]
	(3.5,1.5) arc (0:-90:.5);
\end{tikzpicture}
&

\begin{tikzpicture}

\node [label=left:$\overline{j}$] at (1,2) {};
\node [label=left:${j}$] at (1,1) {};

\draw [<-<] (1,2) -- (2,2);
\draw [>->] (1,1) -- (2,1);

\draw [>->] (1.5,1.5) -- (1.5,.5);

\draw [-]
	(2,2) arc (90:0:.5);
\draw [*-]
	(2.5,1.5) arc (0:-90:.5);
\end{tikzpicture}

\\[10pt]
$\left(c_1^{(\overline{j})}c_2^{({j})}\right)U^{(j)}a_1^{(\overline{j})}$&$+$&$\left(a_2^{(\overline{j})}b_1^{({j})}-a_2^{({j})}b_1^{(\overline{j})}\right)D^{(j)}b_2^{(\overline{j})}$&$U^{(j)}a_1^{({j})}$
\\[10pt] \hline

%
%

\begin{tikzpicture}

\node [label=left:$\overline{j}$] at (1,2) {};
\node [label=left:${j}$] at (1,1) {};

\draw [<-<] (1,2) .. controls (1.5,2) and (1.5,1) .. (2,1);
\draw [<-<] (1,1) .. controls (1.5,1) and (1.5,2) .. (2,2);

\draw [-<] (2,2) -- (3,2);
\draw [->] (2,1) -- (3,1);

\draw [>-<] (2.5,1.5) -- (2.5,0.5);

\draw [-]
	(3,2) arc (90:0:.5);
\draw [*-]
	(3.5,1.5) arc (0:-90:.5);
\end{tikzpicture}
&
&
&

\begin{tikzpicture}

\node [label=left:$\overline{j}$] at (1,2) {};
\node [label=left:${j}$] at (1,1) {};

\draw [<-<] (1,2) -- (2,2);
\draw [<->] (1,1) -- (2,1);

\draw [>-<] (1.5,1.5) -- (1.5,.5);

\draw [-]
	(2,2) arc (90:0:.5);
\draw [*-]
	(2.5,1.5) arc (0:-90:.5);
\end{tikzpicture}

\\[10pt]
$\left(a_1^{(\overline{j})}a_2^{({j})}+b_1^{({j})}b_2^{(\overline{j})}\right)U^{(j)}c_1^{(\overline{j})}$&$ $&$ $&$U^{(j)}c_1^{(j)}$
\\[10pt] \hline 
\end{tabular}
\caption{The various fish configurations in type $\mathfrak{C}_*$, as well as type $\mathfrak{D}$ when $1 \not\in\lambda$.}
\label{fig:Calt.and.nonzero.containing.D.fish}
\end{figure}

\end{proof}

\begin{lemma}[Fish relation, type $\mathfrak{D}$ for $1 \in\lambda$]\label{le:fish.D}
Assume $U^{(j)}, D^{(j)} \neq 0$.  A necessary and sufficient condition for the ratio of
$$
\mathcal{Z}
\left(
\begin{minipage}{1.5in}
\begin{tikzpicture}
\node [label=left:$\overline{j}$] at (-0.5,1) {};
\node [label=left:$j$] at (-0.5,0) {};
\node [circle,draw,scale=.6] at (-.25,1) {$\alpha$};
\node [circle,draw,scale=0.55] at (-.25,0) {$\beta$};
\node [circle,draw,scale=.55] at (1.5,-.75) {$\gamma$};


\draw [<-] (1.5,.5) -- (1.5,-.5);
\draw [-] (0,0) .. controls (0.5,0) and (.5,1) .. (1,1);
\draw [-] (0,1) .. controls (0.5,1) and (.5,0) .. (1,0);
\draw [-] (1,1) -- (2,1);
\draw [-] (1,0) -- (2,0);

\draw [-]
	(2,1) arc (90:0:.5);
\draw [*-]
	(2.5,.5) arc (0:-90:.5);
\end{tikzpicture}
\end{minipage}
\right)
\quad \mbox{and} \quad
\text{wt}\left(
\begin{minipage}{1.1in}
\begin{tikzpicture}
\node [label=left:$\overline{j}$] at (-.5,1) {};
\node [label=left:$j$] at (-.5,0) {};
\node [circle,draw,scale=.6] at (-.25,1) {$\alpha$};
\node [circle,draw,scale=0.55] at (-.25,0) {$\beta$};
\node [circle,draw,scale=.55] at (.5,-.75) {$\gamma$};

\draw [-] (0,1) -- (1,1);
\draw [-] (0,0) -- (1,0);
\draw [<-] (0.5,.5) -- (.5,-.5);
\draw [-]
	(1,1) arc (90:0:.5);
\draw [*-]
	(1.5,0.5) arc (0:-90:.5);
\end{tikzpicture}
\end{minipage}
\right)
$$ to be a constant independent of $\alpha,\beta$ and $\gamma$ is that $D^{(j)} = U^{(j)}$.  Under this assumption, this ratio is equal to $$\left(a_1^{(j)}\right)^2 + \left(b_2^{(j)}\right)^2.$$
\end{lemma}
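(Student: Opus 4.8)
The plan is to mirror the proof of Lemma~\ref{le:fish.C*} almost verbatim, the only structural change being that the outward-pointing arrow on the half-column's lower edge is now reversed to point inward, which is exactly the combinatorial effect of requiring $1\in\lambda$ rather than $1\notin\lambda$. First I would fix the orientation $\gamma$ on the half-column together with the external orientations $\alpha,\beta$ and enumerate every admissible filling of the local configuration (the two crossed strands, the horizontal edges, and the $\mathfrak{D}$-type bend). As in the previous lemma, there are exactly two choices for the unordered pair $\{\alpha,\beta\}$ that admit a filling, and for each such choice the vertical edge feeding the half column forces a short, explicit list of internal decorations. I would record these in a figure analogous to Figure~\ref{fig:Calt.and.nonzero.containing.D.fish}, reading off the weight of each configuration as a product of a bend weight ($U^{(j)}$ or $D^{(j)}$), a vertex weight coming from the crossing, and the half-column vertex weight.

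Next I would form, for each admissible pair $\{\alpha,\beta\}$, the ratio of $\mathcal{Z}$ of the full fish to the weight of the reduced configuration with the crossing removed. Dividing out the common bend and half-column contributions and applying the Symmetry Assumptions together with the free-fermion relation $\Delta^{(j)}=0$ (in the normalized form $c_1^{(j)}c_2^{(j)}=a_1^{(j)}a_2^{(j)}+b_1^{(j)}b_2^{(j)}$), these ratios simplify to three expressions of the same shape as in Lemma~\ref{le:fish.C*}, but with $a_1^{(j)}$ and $b_2^{(j)}$ playing the roles formerly held by $a_2^{(j)}$ and $b_1^{(j)}$; this exchange is precisely what the reversal of the $\gamma$ arrow induces. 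Setting the two ratios equal and comparing coefficients of the resulting monomials forces the cross term to vanish, which happens if and only if $D^{(j)}=U^{(j)}$; under that hypothesis all three expressions collapse to $\left(a_1^{(j)}\right)^2+\left(b_2^{(j)}\right)^2$, giving the claimed constant.

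The genuinely routine part is the case-by-case weight bookkeeping. The one place that requires care is confirming that reversing the half-column arrow exchanges $a_1\leftrightarrow a_2$ and $b_1\leftrightarrow b_2$ consistently across all admissible fillings, so that the ratio comes out as $\left(a_1^{(j)}\right)^2+\left(b_2^{(j)}\right)^2$ rather than the type-$\mathfrak{C}_*$ value. I expect the main (modest) obstacle to be verifying that no additional filling becomes admissible under the arrow reversal --- that is, that the number of configurations and the pattern of forced internal edges track those of Lemma~\ref{le:fish.C*} exactly --- since it is this correspondence that guarantees the condition $D^{(j)}=U^{(j)}$ is again both necessary and sufficient.
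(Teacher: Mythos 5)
Your proposal is correct and takes essentially the same approach as the paper: the paper likewise enumerates all admissible fillings (its Figure~\ref{fig:zero.containing.D.fish} is exactly the type-$\mathfrak{C}_*$ figure with the half-column arrow reversed, which swaps $a_1^{(j)} \leftrightarrow a_2^{(j)}$ and $b_1^{(j)} \leftrightarrow b_2^{(j)}$ in the resulting expressions), computes each ratio using the Symmetry Assumption and the free-fermion relation, and compares coefficients to force $D^{(j)} = U^{(j)}$, after which the constant $\left(a_1^{(j)}\right)^2 + \left(b_2^{(j)}\right)^2$ can be read off. One small imprecision: the boundary data $(\alpha,\beta,\gamma)$ admits three valid ordered configurations (the mixed pair $\{\alpha,\beta\}$ occurs in two distinct orders, yielding different ratio expressions), not two, but since you plan to enumerate every admissible filling this does not affect the argument.
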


\begin{proof}
Again, we simply list the possibilities and evaluate the ratio in each case.  The possible fillings are given in Figure \ref{fig:zero.containing.D.fish}.  The values of the ratio in each row are:
$$
\left(a_1^{(j)}\right)^2 +\frac{U^{(j)}}{D^{(j)}} \left(b_2^{(j)}\right)^2 +  \left(1-\frac{U^{(j)}}{D^{(j)}}\right)\frac{a_1^{(j)}b_1^{(j)}b_2^{(j)}}{a_2^{(j)}}$$
$$
 \frac{D^{(j)}}{U^{(j)}}\left(a_1^{(j)}\right)^2 + \left(b_2^{(j)}\right)^2 + \left(1-\frac{D^{(j)}}{U^{(j)}}\right)\frac{a_1^{(j)}a_2^{(j)}b_2^{(j)}}{b_1^{(j)}}$$
$$
\left(a_1^{(j)}\right)^2 + \left(b_2^{(j)}\right)^2.$$
One simply compares coefficients to get the necessary and sufficient condition; the resulting ratio can be read off the third expression.

\begin{figure}[!ht]

\begin{tabular}{|ccc|c|} \hline
%
%

\begin{tikzpicture}

\node [label=left:$\overline{j}$] at (1,2) {};
\node [label=left:${j}$] at (1,1) {};

\draw [>->] (1,2) .. controls (1.5,2) and (1.5,1) .. (2,1);
\draw [<-<] (1,1) .. controls (1.5,1) and (1.5,2) .. (2,2);

\draw [-<] (2,2) -- (3,2);
\draw [->] (2,1) -- (3,1);

\draw [<-<] (2.5,1.5) -- (2.5,0.5);

\draw [-]
	(3,2) arc (90:0:.5);
\draw [*-]
	(3.5,1.5) arc (0:-90:.5);
\end{tikzpicture}
&

&
\begin{tikzpicture}

\node [label=left:$\overline{j}$] at (1,2) {};
\node [label=left:${j}$] at (1,1) {};

\draw [>-<] (1,2) .. controls (1.5,2) and (1.5,1) .. (2,1);
\draw [<->] (1,1) .. controls (1.5,1) and (1.5,2) .. (2,2);

\draw [->] (2,2) -- (3,2);
\draw [-<] (2,1) -- (3,1);

\draw [<-<] (2.5,1.5) -- (2.5,0.5);

\draw [-]
	(3,2) arc (90:0:.5);
\draw [*-]
	(3.5,1.5) arc (0:-90:.5);
\end{tikzpicture}
&

\begin{tikzpicture}

\node [label=left:$\overline{j}$] at (1,2) {};
\node [label=left:${j}$] at (1,1) {};

\draw [>->] (1,2) -- (2,2);
\draw [<-<] (1,1) -- (2,1);

\draw [<-<] (1.5,1.5) -- (1.5,.5);

\draw [-]
	(2,2) arc (90:0:.5);
\draw [*-]
	(2.5,1.5) arc (0:-90:.5);
\end{tikzpicture}

\\[10pt]
$\left(a_1^{(\overline{j})}b_2^{({j})}-a_1^{({j})}b_2^{(\overline{j})}\right)U^{(i)}b_1^{(\overline{j})}$&$+$&$\left(c_1^{({j})}c_2^{(\overline{j})}\right)D^{(j)}a_2^{(\overline{j})}$&$D^{(j)}a_2^{({j})}$
\\[10pt] \hline

%
%

\begin{tikzpicture}

\node [label=left:$\overline{j}$] at (1,2) {};
\node [label=left:${j}$] at (1,1) {};

\draw [<->] (1,2) .. controls (1.5,2) and (1.5,1) .. (2,1);
\draw [>-<] (1,1) .. controls (1.5,1) and (1.5,2) .. (2,2);

\draw [-<] (2,2) -- (3,2);
\draw [->] (2,1) -- (3,1);

\draw [<-<] (2.5,1.5) -- (2.5,0.5);

\draw [-]
	(3,2) arc (90:0:.5);
\draw [*-]
	(3.5,1.5) arc (0:-90:.5);
\end{tikzpicture}
&

&
\begin{tikzpicture}

\node [label=left:$\overline{j}$] at (1,2) {};
\node [label=left:${j}$] at (1,1) {};

\draw [<-<] (1,2) .. controls (1.5,2) and (1.5,1) .. (2,1);
\draw [>->] (1,1) .. controls (1.5,1) and (1.5,2) .. (2,2);

\draw [->] (2,2) -- (3,2);
\draw [-<] (2,1) -- (3,1);

\draw [<-<] (2.5,1.5) -- (2.5,0.5);

\draw [-]
	(3,2) arc (90:0:.5);
\draw [*-]
	(3.5,1.5) arc (0:-90:.5);
\end{tikzpicture}
&

\begin{tikzpicture}

\node [label=left:$\overline{j}$] at (1,2) {};
\node [label=left:${j}$] at (1,1) {};

\draw [<-<] (1,2) -- (2,2);
\draw [>->] (1,1) -- (2,1);

\draw [<-<] (1.5,1.5) -- (1.5,.5);

\draw [-]
	(2,2) arc (90:0:.5);
\draw [*-]
	(2.5,1.5) arc (0:-90:.5);
\end{tikzpicture}

\\[10pt]
$\left(c_1^{(\overline{j})}c_2^{({j})}\right)U^{(j)}b_1^{(\overline{j})}$&$+$&$\left(a_2^{(\overline{j})}b_1^{({j})}-a_2^{({j})}b_1^{(\overline{j})}\right)D^{(j)}a_2^{(\overline{j})}$&$U^{(j)}b_1^{({j})}$
\\[10pt] \hline

%
%

\begin{tikzpicture}

\node [label=left:$\overline{j}$] at (1,2) {};
\node [label=left:${j}$] at (1,1) {};

\draw [>->] (1,2) .. controls (1.5,2) and (1.5,1) .. (2,1);
\draw [>->] (1,1) .. controls (1.5,1) and (1.5,2) .. (2,2);

\draw [->] (2,2) -- (3,2);
\draw [-<] (2,1) -- (3,1);

\draw [<->] (2.5,1.5) -- (2.5,0.5);

\draw [-]
	(3,2) arc (90:0:.5);
\draw [*-]
	(3.5,1.5) arc (0:-90:.5);
\end{tikzpicture}
&

&
&

\begin{tikzpicture}

\node [label=left:$\overline{j}$] at (1,2) {};
\node [label=left:${j}$] at (1,1) {};

\draw [>->] (1,2) -- (2,2);
\draw [>-<] (1,1) -- (2,1);

\draw [<->] (1.5,1.5) -- (1.5,.5);

\draw [-]
	(2,2) arc (90:0:.5);
\draw [*-]
	(2.5,1.5) arc (0:-90:.5);
\end{tikzpicture}

\\[10pt]
$\left(a_1^{({j})}a_2^{(\overline{j})}+b_1^{(\overline{j})}b_2^{({j})}\right)D^{(j)}c_2^{(\overline{j})}$&$ $&$ $&$D^{(j)}c_2^{({j})}$
\\[10pt] \hline
\end{tabular}
\caption{The fish configurations in type $\mathfrak{D}$  when $1 \in \lambda$.}
\label{fig:zero.containing.D.fish}
\end{figure}
\end{proof}

\begin{lemma}[Jellyfish relation, type $\mathfrak{C}$]\label{le:fish.C}
Assume $U^{(j)}, D^{(j)} \neq 0$.  A necessary and sufficient condition for the ratio of
$$
\mathcal{Z}
\left(
\begin{minipage}{2.3in}
\begin{tikzpicture}
	

	\node [label=left:$\overline{j}$] at (-.5,1) {};
	\node [label=left:$0$] at (-.5,0) {};
	\node [label=left:$j$] at (-.5,-1) {};

	\node [circle,draw,scale=.6] at (-.25,1) {$\alpha$};
	\node [circle,draw,scale=0.55] at (-.25,0) {$\beta$};
	\node [circle,draw,scale=.55] at (-.25,-1) {$\gamma$};
	\node [circle,draw,scale=.55] at (3.5,-1.75) {$\delta$};

	\draw [-]	(0,1) .. controls (.5,1) and (.5,0) .. (1,0);
	\draw [-]	(0,0) .. controls (.5,0) and (.5,1) .. (1,1);
	\draw [-]	(0,-1) -- (1,-1);

	\draw [-]	(1,1) -- (2,1);	
	\draw [-]	(1,0) .. controls (1.5,0) and (1.5,-1) .. (2,-1);
	\draw [-]	(1,-1) .. controls (1.5,-1) and (1.5,0) .. (2,0);
	
	\draw [-]	(2,1) .. controls (2.5,1) and (2.5,0) .. (3,0);
	\draw [-]	(2,0) .. controls (2.5,0) and (2.5,1) .. (3,1);
	\draw [-]	(2,-1) -- (3,-1);
	
	\draw [-]	(3,1) -- (3.5,1);
	\draw [-]	(3,0) -- (3.5,0);
	\draw [-]	(3,-1) -- (3.5,-1);
	
	
	\draw [-] (3.5,1) arc (90:0:1);
	\draw [*-] (4.5,0) arc (0:-90:1);
	
	\draw [*-] (3.5,0) -- (3.5,-1.5);
		
\end{tikzpicture}

\end{minipage}
\right)
\quad \mbox{and} \quad
\text{wt}\left(
\begin{minipage}{1.1in}
\begin{tikzpicture}
\node [label=left:$\overline{j}$] at (-.5,1) {};
\node [label=left:$0$] at (-.5,0) {};
\node [label=left:$j$] at (-.5,-1) {};
\node [circle,draw,scale=.6] at (-.25,1) {$\alpha$};
\node [circle,draw,scale=0.55] at (-.25,0) {$\beta$};
\node [circle,draw,scale=.55] at (-.25,-1) {$\gamma$};
\node [circle,draw,scale=.55] at (.5,-1.75) {$\delta$};

\draw [-] (0,1) -- (.5,1);
\draw [-] (0,0) -- (.5,0);
\draw [-] (0,-1) -- (.5,-1);

\draw [*-] (.5,0) -- (.5,-1.5);

\draw [-]
	(.5,1) arc (90:0:1);
\draw [*-]
	(1.5,0) arc (0:-90:1);
\end{tikzpicture}
\end{minipage}
\right)
$$ to be a polynomial in $\Z[i,a^{(0)},b^{(0)},a_1^{(j)},a_2^{(j)},b_1^{(j)},b_2^{(j)}]$ that is independent of $\alpha,\beta,\gamma$ and $\delta$ is that $L/R = a^{(0)}\pm i~b^{(0)}$ and $D^{(j)}/U^{(j)} =\mp i$.  When $L/R = a^{(0)}-ib^{(0)}$ and $D^{(j)}/U^{(j)} = i$, this ratio is equal to $$\left(a_1^{(j)}- i~b_2^{(j)}\right)\left(a_2^{(j)} + i~b_1^{(j)}\right)\left(a_1^{(j)}a^{(0)} + b^{(0)}b_2^{(j)}\right)\left(a^{(0)}a_2^{(j)}+b_1^{(j)}b^{(0)}\right).$$
\end{lemma}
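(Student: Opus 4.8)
The plan is to mimic the proofs of the earlier fish relations (Lemmas~\ref{le:fish.B}, \ref{le:fish.C*}, \ref{le:fish.D}): for each admissible choice of boundary orientations $\alpha,\beta,\gamma,\delta$ I would enumerate all admissible fillings of the interior edges of the large diagram, sum their Boltzmann weights to obtain $\mathcal{Z}$, divide by the single weight $\text{wt}$ of the small diagram, and then impose that the resulting ratio be both independent of the boundary data and free of denominators, so that it lands in the stated polynomial ring. What makes this tractable, and what I would use to organize the bookkeeping, is the shape of the claimed answer: it factors as the product $(a_1^{(j)}-i\,b_2^{(j)})(a_2^{(j)}+i\,b_1^{(j)})$ already produced by the type $\mathfrak{B}$ fish relation, times two further factors $(a_1^{(j)}a^{(0)}+b^{(0)}b_2^{(j)})(a^{(0)}a_2^{(j)}+b_1^{(j)}b^{(0)})$ involving the central-row weights. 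This strongly suggests that the large diagram should \emph{factor} into a ``bend piece'' built from rows $j,\overline{j}$ and a ``central piece'' built from row $0$ together with the half-column.

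Concretely, I would first use the Yang-Baxter equation (Proposition~\ref{prop:ybe}) together with the bend relation (Lemma~\ref{le:bend.ybe}) to slide the three crossings of the braid so that the strand in row $0$ is disentangled from the $j/\overline{j}$ u-turn; this is the same maneuver underlying the caduceus relation (Lemma~\ref{le:caduceus}). Once the $j,\overline{j}$ strands meet the bend directly, Lemma~\ref{le:fish.B} applies and peels off the factor $(a_1^{(j)}-i\,b_2^{(j)})(a_2^{(j)}+i\,b_1^{(j)})$; this is exactly where the constraint $\left(D^{(j)}/U^{(j)}\right)^2=-1$, i.e.\ $D^{(j)}/U^{(j)}=\pm i$, first appears. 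In the normalization $c_2^{(j)}=1$ with $\Delta^{(j)}=0$ from (\ref{deltadefined}), the algebraic simplifications here are identical to those already carried out in the proof of Lemma~\ref{le:fish.B}.

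It then remains to evaluate the central piece: the strand in row $0$ interacting with the half-column through the corner vertex (weights $R,L$) and the bend. Here I would again list the finitely many admissible fillings, now using that the nonzero central-row weights are $a^{(0)}$ and $b^{(0)}$, subject to $\left(a^{(0)}\right)^2+\left(b^{(0)}\right)^2=c_1^{(0)}$ from the Remark, and demand that the sum over internal states, divided by the small-diagram weight, be independent of $\beta,\gamma,\delta$ and polynomial. Comparing the coefficients of the competing monomials forces $L/R=a^{(0)}\pm i\,b^{(0)}$, with the sign opposite to that selected for $D^{(j)}/U^{(j)}$, matching the $\mp$ in the statement; substituting the particular pair $L/R=a^{(0)}-i\,b^{(0)}$ and $D^{(j)}/U^{(j)}=i$ then collapses the sum to $(a_1^{(j)}a^{(0)}+b^{(0)}b_2^{(j)})(a^{(0)}a_2^{(j)}+b_1^{(j)}b^{(0)})$, completing the four-factor product.

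The main obstacle I anticipate is organizational rather than conceptual: the large diagram has many more internal edges than the earlier fish pictures (three crossings, a central row, a half-column, and a nested bend), so a naive enumeration produces a sizable case list, and one must verify simultaneously that the ratio is the \emph{same} function for every admissible $(\alpha,\beta,\gamma,\delta)$ and that all denominators in $U^{(j)},D^{(j)},L,R$ cancel. Organizing the computation through the factorization into a bend piece and a central piece is what keeps this manageable. The genuinely delicate point is that the two disentangling conditions $D^{(j)}/U^{(j)}=\mp i$ and $L/R=a^{(0)}\pm i\,b^{(0)}$ must hold with \emph{correlated} signs; this correlation is invisible if one only asks for boundary-independence and is precisely what the stronger requirement of polynomiality enforces.
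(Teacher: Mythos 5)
Your fallback plan in the opening paragraph --- enumerate the admissible fillings for each boundary choice, compute the ratios, and impose polynomiality plus boundary-independence --- is exactly how the paper proceeds: it examines two boundary cases in detail, uses polynomiality of one quotient to force $a^{(0)}+b^{(0)}\,U^{(j)}/D^{(j)}-L/R=0$, equates coefficients across the two cases to get a second identity, adds these to obtain the correlated signs, and relegates sufficiency and the explicit value of the quotient to six computer-assisted computations. But the device you actually propose to make this tractable --- sliding the crossings with Proposition~\ref{prop:ybe} and Lemma~\ref{le:bend.ybe} so that the row-$0$ strand is ``disentangled'' and Lemma~\ref{le:fish.B} can then be applied to the $j,\overline{j}$ part --- does not work, and this is a genuine gap. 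The Yang--Baxter equation only rearranges crossings (it turns the braid $\sigma_1\sigma_2\sigma_1$ into $\sigma_2\sigma_1\sigma_2$); it never reduces their number, so the two crossings involving the row-$0$ strand cannot be removed by it. Lemma~\ref{le:bend.ybe} passes a crossing through a \emph{pair of nested} u-turns, which is not the configuration here (a single u-turn for $j,\overline{j}$ plus the corner vertex and half-column for row $0$), and no relation in the paper lets a crossing move past the corner vertex carrying the weights $R,L$; establishing such a relation is essentially equivalent in difficulty to the lemma you are trying to prove. The caduceus relation, for its part, moves the entire intact braid past a column of vertical edges --- it never separates one strand from the other two.

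There is also a structural obstruction to your second step specifically: in every Yang--Baxter rearrangement of this braid, the row-$0$ strand must terminate in the middle position (it is the strand attached to the corner vertex), so the strands $j$ and $\overline{j}$ end at the outer positions with row $0$ between them. Consequently a $(j,\overline{j})$ crossing can never sit immediately adjacent to the u-turn, and Lemma~\ref{le:fish.B} is never directly applicable inside this diagram. The factorization of the final answer into the $\mathfrak{B}$-fish factor $\left(a_1^{(j)}-i\,b_2^{(j)}\right)\left(a_2^{(j)}+i\,b_1^{(j)}\right)$ times the two central factors is an \emph{output} of the computation, not evidence of a diagrammatic factorization of the partition function. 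If you want to avoid the brute-force case analysis, you would first have to prove new local identities --- an inversion/unitarity relation for the crossing vertices and a rule for passing a crossing through the corner vertex --- neither of which is available in the paper; as written, your argument should fall back on the enumeration you sketched, which is precisely the paper's proof.
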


\begin{proof}
Again, the proof is simply a matter of writing out all the possible fillings in each case and evaluating the partition function, though of course this time the calculations are a bit more cumbersome.  Examining two cases in detail will be enough to prove necessity of the stated conditions; sufficiency and the value of the quotient come from six cumbersome calculations that are best implemented by a computer. Figure \ref{fig:C.fish} shows the possible fillings for two choices of $\{\alpha,\beta,\gamma,\delta\}$. 

\begin{figure}[!ht]
\begin{tabular}{|ccccc|c|} \hline
%
%

\begin{tikzpicture}[scale=.5]

	\node [label=left:$\overline{j}$] at (0,1) {};
	\node [label=left:$0$] at (0,0) {};
	\node [label=left:$j$] at (0,-1) {};

	\draw [>-]	(0,1) .. controls (.5,1) and (.5,0) .. (1,0);
	\draw [>-]	(0,0) .. controls (.5,0) and (.5,1) .. (1,1);
	\draw [<-]	(0,-1) -- (1,-1);

	\draw [>-]	(1,1) -- (2,1);	
	\draw [>-]	(1,0) .. controls (1.5,0) and (1.5,-1) .. (2,-1);
	\draw [<-]	(1,-1) .. controls (1.5,-1) and (1.5,0) .. (2,0);
	
	\draw [>-]	(2,1) .. controls (2.5,1) and (2.5,0) .. (3,0);
	\draw [>-]	(2,0) .. controls (2.5,0) and (2.5,1) .. (3,1);
	\draw [<-]	(2,-1) -- (3,-1);
	
	\draw [>-]	(3,1) -- (3.5,1);
	\draw [>-]	(3,0) -- (3.5,0);
	\draw [<-]	(3,-1) -- (3.5,-1);
	
	
	\draw [-] (3.5,1) arc (90:0:1);
	\draw [*-] (4.5,0) arc (0:-90:1);
	
	\draw [*->] (3.5,0) -- (3.5,-1.5);
		
\end{tikzpicture}
&

&
\begin{tikzpicture}[scale=.5]
	
	\node [label=left:$\overline{j}$] at (0,1) {};
	\node [label=left:$0$] at (0,0) {};
	\node [label=left:$j$] at (0,-1) {};

	\draw [>-]	(0,1) .. controls (.5,1) and (.5,0) .. (1,0);
	\draw [>-]	(0,0) .. controls (.5,0) and (.5,1) .. (1,1);
	\draw [<-]	(0,-1) -- (1,-1);

	\draw [>-]	(1,1) -- (2,1);	
	\draw [>-]	(1,0) .. controls (1.5,0) and (1.5,-1) .. (2,-1);
	\draw [<-]	(1,-1) .. controls (1.5,-1) and (1.5,0) .. (2,0);
	
	\draw [>-]	(2,1) .. controls (2.5,1) and (2.5,0) .. (3,0);
	\draw [<-]	(2,0) .. controls (2.5,0) and (2.5,1) .. (3,1);
	\draw [>-]	(2,-1) -- (3,-1);
	
	\draw [>-]	(3,1) -- (3.5,1);
	\draw [<-]	(3,0) -- (3.5,0);
	\draw [>-]	(3,-1) -- (3.5,-1);
	
	
	\draw [-] (3.5,1) arc (90:0:1);
	\draw [*-] (4.5,0) arc (0:-90:1);
	
	\draw [*->] (3.5,0) -- (3.5,-1.5);
		
\end{tikzpicture}
&
&
\begin{tikzpicture}[scale=.5]
	
	\node [label=left:$\overline{j}$] at (0,1) {};
	\node [label=left:$0$] at (0,0) {};
	\node [label=left:$j$] at (0,-1) {};

	\draw [>-]	(0,1) .. controls (.5,1) and (.5,0) .. (1,0);
	\draw [>-]	(0,0) .. controls (.5,0) and (.5,1) .. (1,1);
	\draw [<-]	(0,-1) -- (1,-1);

	\draw [>-]	(1,1) -- (2,1);	
	\draw [>-]	(1,0) .. controls (1.5,0) and (1.5,-1) .. (2,-1);
	\draw [<-]	(1,-1) .. controls (1.5,-1) and (1.5,0) .. (2,0);
	
	\draw [>-]	(2,1) .. controls (2.5,1) and (2.5,0) .. (3,0);
	\draw [<-]	(2,0) .. controls (2.5,0) and (2.5,1) .. (3,1);
	\draw [>-]	(2,-1) -- (3,-1);
	
	\draw [<-]	(3,1) -- (3.5,1);
	\draw [>-]	(3,0) -- (3.5,0);
	\draw [>-]	(3,-1) -- (3.5,-1);
	
	
	\draw [-] (3.5,1) arc (90:0:1);
	\draw [*-] (4.5,0) arc (0:-90:1);
	
	\draw [*->] (3.5,0) -- (3.5,-1.5);
		
\end{tikzpicture}
&
\begin{tikzpicture}[scale=.5]
\node [label=left:$\overline{j}$] at (0,1) {};
\node [label=left:$0$] at (0,0) {};
\node [label=left:$j$] at (0,-1) {};

\draw [>-] (0,1) -- (.5,1);
\draw [>-] (0,0) -- (.5,0);
\draw [<-] (0,-1) -- (.5,-1);

\draw [*->] (.5,0) -- (.5,-1.5);

\draw [-]
	(.5,1) arc (90:0:1);
\draw [*-]
	(1.5,0) arc (0:-90:1);
\end{tikzpicture}
\\[10pt]\hline
%
%

\begin{tikzpicture}[scale=.5]

	\node [label=left:$\overline{j}$] at (0,1) {};
	\node [label=left:$0$] at (0,0) {};
	\node [label=left:$j$] at (0,-1) {};

	\draw [<-]	(0,1) .. controls (.5,1) and (.5,0) .. (1,0);
	\draw [<-]	(0,0) .. controls (.5,0) and (.5,1) .. (1,1);
	\draw [>-]	(0,-1) -- (1,-1);

	\draw [<-]	(1,1) -- (2,1);	
	\draw [<-]	(1,0) .. controls (1.5,0) and (1.5,-1) .. (2,-1);
	\draw [>-]	(1,-1) .. controls (1.5,-1) and (1.5,0) .. (2,0);
	
	\draw [<-]	(2,1) .. controls (2.5,1) and (2.5,0) .. (3,0);
	\draw [>-]	(2,0) .. controls (2.5,0) and (2.5,1) .. (3,1);
	\draw [<-]	(2,-1) -- (3,-1);
	
	\draw [>-]	(3,1) -- (3.5,1);
	\draw [<-]	(3,0) -- (3.5,0);
	\draw [<-]	(3,-1) -- (3.5,-1);
	
	
	\draw [-] (3.5,1) arc (90:0:1);
	\draw [*-] (4.5,0) arc (0:-90:1);
	
	\draw [*-<] (3.5,0) -- (3.5,-1.5);
		
\end{tikzpicture}
&

&
\begin{tikzpicture}[scale=.5]
	
	\node [label=left:$\overline{j}$] at (0,1) {};
	\node [label=left:$0$] at (0,0) {};
	\node [label=left:$j$] at (0,-1) {};

	\draw [<-]	(0,1) .. controls (.5,1) and (.5,0) .. (1,0);
	\draw [<-]	(0,0) .. controls (.5,0) and (.5,1) .. (1,1);
	\draw [>-]	(0,-1) -- (1,-1);

	\draw [<-]	(1,1) -- (2,1);	
	\draw [<-]	(1,0) .. controls (1.5,0) and (1.5,-1) .. (2,-1);
	\draw [>-]	(1,-1) .. controls (1.5,-1) and (1.5,0) .. (2,0);
	
	\draw [<-]	(2,1) .. controls (2.5,1) and (2.5,0) .. (3,0);
	\draw [>-]	(2,0) .. controls (2.5,0) and (2.5,1) .. (3,1);
	\draw [<-]	(2,-1) -- (3,-1);
	
	\draw [<-]	(3,1) -- (3.5,1);
	\draw [>-]	(3,0) -- (3.5,0);
	\draw [<-]	(3,-1) -- (3.5,-1);
	
	
	\draw [-] (3.5,1) arc (90:0:1);
	\draw [*-] (4.5,0) arc (0:-90:1);
	
	\draw [*-<] (3.5,0) -- (3.5,-1.5);
		
\end{tikzpicture}
&
&
\begin{tikzpicture}[scale=.5]
	
	\node [label=left:$\overline{j}$] at (0,1) {};
	\node [label=left:$0$] at (0,0) {};
	\node [label=left:$j$] at (0,-1) {};

	\draw [<-]	(0,1) .. controls (.5,1) and (.5,0) .. (1,0);
	\draw [<-]	(0,0) .. controls (.5,0) and (.5,1) .. (1,1);
	\draw [>-]	(0,-1) -- (1,-1);

	\draw [<-]	(1,1) -- (2,1);	
	\draw [<-]	(1,0) .. controls (1.5,0) and (1.5,-1) .. (2,-1);
	\draw [>-]	(1,-1) .. controls (1.5,-1) and (1.5,0) .. (2,0);
	
	\draw [<-]	(2,1) .. controls (2.5,1) and (2.5,0) .. (3,0);
	\draw [<-]	(2,0) .. controls (2.5,0) and (2.5,1) .. (3,1);
	\draw [>-]	(2,-1) -- (3,-1);
	
	\draw [<-]	(3,1) -- (3.5,1);
	\draw [<-]	(3,0) -- (3.5,0);
	\draw [>-]	(3,-1) -- (3.5,-1);
	
	
	\draw [-] (3.5,1) arc (90:0:1);
	\draw [*-] (4.5,0) arc (0:-90:1);
	
	\draw [*-<] (3.5,0) -- (3.5,-1.5);
		
\end{tikzpicture}
&
\begin{tikzpicture}[scale=.5]
\node [label=left:$\overline{j}$] at (0,1) {};
\node [label=left:$0$] at (0,0) {};
\node [label=left:$j$] at (0,-1) {};

\draw [<-] (0,1) -- (.5,1);
\draw [<-] (0,0) -- (.5,0);
\draw [>-] (0,-1) -- (.5,-1);

\draw [*-<] (.5,0) -- (.5,-1.5);

\draw [-]
	(.5,1) arc (90:0:1);
\draw [*-]
	(1.5,0) arc (0:-90:1);
\end{tikzpicture}
\\[10pt]\hline
\end{tabular}
\caption{Two of the six fish configurations in type $\mathfrak{C}$.}
\label{fig:C.fish}
\end{figure}

A computer algebra system allows one to efficiently compute the relevant quotients.  Insisting that the quotient be a polynomial allows one to then show that certain terms vanish.  For instance, when expanding the quotient in the first case, the following are the only terms that involve $b_2^{(j)}$ as a denominator:
$$\frac{\left(a^{(0)}\right)^2 \left(a_1^{(j)}\right)^3 a_2^{(j)} b_1^{(j)}}{b_2^{(j)}} + \frac{a^{(0)}\left(a_1^{(j)}\right)^3 a_2^{(j)}b^{(0)}b_1^{(j)}U^{(j)}}{b_2^{(j)}D^{(j)}} - \frac{a^{(0)}\left(a_1^{(j)}\right)^3a_2^{(j)}b_1^{(j)}L}{b_2^{(j)}R}.$$  In order for this quotient to be a polynomial we must have \begin{equation}\label{eq:fish.C.identity.one}a^{(0)}+b^{(0)}\frac{U^{(j)}}{D^{(j)}} - \frac{L}{R} = 0.\end{equation}  
On the other hand, setting the two quotients equal to each other forces equalities of coefficients; for example, equating terms involving $\left(a_1^{(j)}\right)^2\left(a_2^{(j)}\right)^2$ in each of the above quotients then shows that 
\begin{equation}\label{eq:fish.C.identity.three}-b^{(0)} \frac{U^{(j)}}{D^{(j)}} + \frac{L}{R} = \left(\left(a^{(0)}\right)^2+\left(b^{(0)}\right)^2\right)\frac{R}{L}-b^{(0)}\frac{D^{(j)}}{U^{(j)}}.\end{equation}

Adding together equations (\ref{eq:fish.C.identity.one}) and (\ref{eq:fish.C.identity.three}) gives
$$a^{(0)}+b^{(0)}\frac{D^{(j)}}{U^{(j)}} = \left(\left(a^{(0)}\right)^2+\left(b^{(0)}\right)^2\right)\frac{R}{L},$$
from which we recover $R/L = (a^{(0)} \pm i~b^{(0)})^{-1}$ and $D^{(j)}/U^{(j)} =\mp i$.
\end{proof}

\begin{lemma}[Jellyfish relation, type $\mathfrak{B}_*$]\label{le:fish.B*}
Assume $U^{(j)}, D^{(j)} \neq 0$.  A necessary and sufficient condition for the ratio of
$$
\mathcal{Z}
\left(
\begin{minipage}{2.3in}
\begin{tikzpicture}
	

	\node [label=left:$\overline{j}$] at (-.5,1) {};
	\node [label=left:$0$] at (-.5,0) {};
	\node [label=left:$j$] at (-.5,-1) {};

	\node [circle,draw,scale=.6] at (-.25,1) {$\alpha$};
	\node [circle,draw,scale=.55] at (-.25,-1) {$\beta$};

	\draw [-]	(0,1) .. controls (.5,1) and (.5,0) .. (1,0);
	\draw [>-]	(-.25,0) .. controls (.5,0) and (.5,1) .. (1,1);
	\draw [-]	(0,-1) -- (1,-1);

	\draw [-]	(1,1) -- (2,1);	
	\draw [-]	(1,0) .. controls (1.5,0) and (1.5,-1) .. (2,-1);
	\draw [-]	(1,-1) .. controls (1.5,-1) and (1.5,0) .. (2,0);
	
	\draw [-]	(2,1) .. controls (2.5,1) and (2.5,0) .. (3,0);
	\draw [-]	(2,0) .. controls (2.5,0) and (2.5,1) .. (3,1);
	\draw [-]	(2,-1) -- (3,-1);
	
	\draw [-]	(3,1) -- (3.5,1);
	\draw [->]	(3,0) -- (3.5,0);
	\draw [-]	(3,-1) -- (3.5,-1);
	
	
	\draw [-] (3.5,1) arc (90:0:1);
	\draw [*-] (4.5,0) arc (0:-90:1);
	
		
\end{tikzpicture}

\end{minipage}
\right)
\quad \mbox{and} \quad
\text{wt}
\left(
\begin{minipage}{.9in}
\begin{tikzpicture}
\node [label=left:$\overline{j}$] at (-.5,1) {};
\node [label=left:$j$] at (-.5,-1) {};
\node [circle,draw,scale=.6] at (-.25,1) {$\alpha$};
\node [circle,draw,scale=.55] at (-.25,-1) {$\beta$};



\draw [-]
	(0,1) arc (90:0:1);
\draw [*-]
	(1,0) arc (0:-90:1);
\end{tikzpicture}
\end{minipage}
\right)
$$ to be a polynomial in $\Z[i,a^{(0)},b^{(0)},a_1^{(j)},a_2^{(j)},b_1^{(j)},b_2^{(j)}]$ that is independent of $\alpha$ and $\beta$ is that $\left(D^{(j)}\right)^2 = \left(U^{(j)}\right)^2$.  If we set $D^{(j)}=U^{(j)}=1$, then this ratio is equal to $$\left(a^{(0)}a_2^{(j)}+b_1^{(j)}b^{(0)}\right)\left(a_1^{(j)}a^{(0)}+b^{(0)}b_2^{(j)}\right)\left(\left(a_1^{(j)}\right)^2+\left(b_2^{(j)}\right)^2\right).$$  
\end{lemma}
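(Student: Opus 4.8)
The plan is to follow the template of Lemma~\ref{le:fish.C}, the type $\mathfrak{C}$ jellyfish relation, since the two configurations differ only at the right end: here the central row~$0$ exits directly with a fixed outward arrow rather than bending into the half-column $R/L$ vertex. I would first record the algebraic data we may use. The free-fermion normalization $c_2^{(j)} = 1$ together with $\Delta^{(j)} = 0$ gives $c_1^{(j)} = a_1^{(j)}a_2^{(j)} + b_1^{(j)}b_2^{(j)}$; the degeneracy of the central index yields $a^{(0)} = a_1^{(0)} = a_2^{(0)}$, $b^{(0)} = b_1^{(0)} = b_2^{(0)}$, and $c_1^{(0)} = (a^{(0)})^2 + (b^{(0)})^2$ as in the Remark following Proposition~\ref{prop:ybe}; and the bend symmetry conditions $U^{(j)} = U^{(\overline{j})}$ and $D^{(j)} = D^{(\overline{j})}$. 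Together these express every $c$-weight through the variables of the target ring $\Z[i, a^{(0)}, b^{(0)}, a_1^{(j)}, a_2^{(j)}, b_1^{(j)}, b_2^{(j)}]$.

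There are two admissible choices of the boundary pair $\{\alpha, \beta\}$ compatible with the bend. For each I would enumerate all interior fillings of the three-row braid feeding into the bend (the central row~$0$ has completely prescribed boundary), sum their Boltzmann weights to obtain $\mathcal{Z}$ explicitly, and divide by the weight $\text{wt}$ of the single bend on the right to form the quotient $Q_{\alpha,\beta}$.

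Necessity of $(D^{(j)})^2 = (U^{(j)})^2$ would then come from two requirements, exactly as equations~(\ref{eq:fish.C.identity.one}) and~(\ref{eq:fish.C.identity.three}) arise in the $\mathfrak{C}$ case. First, polynomiality: in the raw expression for $Q_{\alpha,\beta}$ certain monomials carry a denominator (a power of $b_2^{(j)}$, say), and demanding that these cancel forces one relation in the ratio $D^{(j)}/U^{(j)}$. Second, independence of the boundary: equating $Q_{\alpha,\beta}$ for the two choices of $\{\alpha,\beta\}$ and matching the coefficient of a well-chosen monomial, for instance $(a_1^{(j)})^2(a_2^{(j)})^2$, gives a second relation. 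Because the central row here exits directly rather than through an $R/L$ vertex, there is no $L/R$ parameter left to absorb, so in contrast to the $\mathfrak{C}$ case these relations do not pin $D^{(j)}/U^{(j)}$ down to $\mp i$; instead they are consistent precisely when $(D^{(j)}/U^{(j)})^2 = 1$.

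For sufficiency and the evaluation I would set $D^{(j)} = U^{(j)} = 1$, at which point $Q_{\alpha,\beta}$ is manifestly polynomial and agrees across the two boundary choices, and then simplify using the free-fermion and central-row identities recorded above to reach the claimed product $(a^{(0)}a_2^{(j)} + b_1^{(j)}b^{(0)})(a_1^{(j)}a^{(0)} + b^{(0)}b_2^{(j)})((a_1^{(j)})^2 + (b_2^{(j)})^2)$. I expect this last step to be the main obstacle: the three-row braid produces a large number of monomials, and the factorization into these degree-two pieces is not visible term-by-term but emerges only after repeated, non-obvious substitutions of the relation $c_1^{(j)} = a_1^{(j)}a_2^{(j)} + b_1^{(j)}b_2^{(j)}$. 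As with the six cases of Lemma~\ref{le:fish.C}, checking that the simplified quotient is genuinely independent of the remaining $c$-weights and of the boundary is best carried out with a computer algebra system.
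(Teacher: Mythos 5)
Your proposal is correct and follows essentially the same route as the paper's proof: enumerate the two admissible boundary pairs $\{\alpha,\beta\}$ and their fillings, expand both partition functions, and derive $\left(D^{(j)}\right)^2 = \left(U^{(j)}\right)^2$ by matching coefficients of a monomial across the two quotients (the paper uses $a^{(0)}\left(a_1^{(j)}\right)^3 b^{(0)} b_1^{(j)}$), with the final factorization verified by direct (computer-assisted) computation at $D^{(j)}=U^{(j)}=1$. The only slight discrepancy is that the polynomiality/denominator step you import from the type $\mathfrak{C}$ case plays no real role here—since the comparison configuration is just a bare bend of weight $U^{(j)}$ or $D^{(j)}$, no denominators in $b_2^{(j)}$ arise, and the necessary condition comes entirely from boundary independence—but this does not affect the validity of your argument.
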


\begin{proof}
As usual, the proof boils down to writing out all the possibilities and performing the relevant calculations.  In this case there are only two choices for $\alpha$ and $\beta$ which lead to valid diagrams.  These various fillings in each case are given in Figure \ref{fig:B*.fish}.

\begin{figure}[!ht]
\begin{tabular}{|ccccc|c|} \hline
%
%

\begin{tikzpicture}[scale=.5]

	\node [label=left:$\overline{j}$] at (0,1) {};
	\node [label=left:$0$] at (0,0) {};
	\node [label=left:$j$] at (0,-1) {};

	\draw [>-]	(0,1) .. controls (.5,1) and (.5,0) .. (1,0);
	\draw [>-]	(0,0) .. controls (.5,0) and (.5,1) .. (1,1);
	\draw [<-]	(0,-1) -- (1,-1);

	\draw [>-]	(1,1) -- (2,1);	
	\draw [>-]	(1,0) .. controls (1.5,0) and (1.5,-1) .. (2,-1);
	\draw [<-]	(1,-1) .. controls (1.5,-1) and (1.5,0) .. (2,0);
	
	\draw [>->]	(2,1) .. controls (2.5,1) and (2.5,0) .. (3,0);
	\draw [>->]	(2,0) .. controls (2.5,0) and (2.5,1) .. (3,1);
	\draw [<-<]	(2,-1) -- (3,-1);
	
	
	
	\draw [-] (3,1) arc (90:0:1);
	\draw [*-] (4,0) arc (0:-90:1);
	
		
\end{tikzpicture}
&

&
\begin{tikzpicture}[scale=.5]
	
	\node [label=left:$\overline{j}$] at (0,1) {};
	\node [label=left:$0$] at (0,0) {};
	\node [label=left:$j$] at (0,-1) {};

	\draw [>-]	(0,1) .. controls (.5,1) and (.5,0) .. (1,0);
	\draw [>-]	(0,0) .. controls (.5,0) and (.5,1) .. (1,1);
	\draw [<-]	(0,-1) -- (1,-1);

	\draw [>-]	(1,1) -- (2,1);	
	\draw [>-]	(1,0) .. controls (1.5,0) and (1.5,-1) .. (2,-1);
	\draw [<-]	(1,-1) .. controls (1.5,-1) and (1.5,0) .. (2,0);
	
	\draw [>->]	(2,1) .. controls (2.5,1) and (2.5,0) .. (3,0);
	\draw [<-<]	(2,0) .. controls (2.5,0) and (2.5,1) .. (3,1);
	\draw [>->]	(2,-1) -- (3,-1);
	
	
	
	\draw [-] (3,1) arc (90:0:1);
	\draw [*-] (4,0) arc (0:-90:1);
	
		
\end{tikzpicture}
&
&
&
\begin{tikzpicture}[scale=.5]
\node [label=left:$\overline{j}$] at (0,1) {};
\node [label=left:$j$] at (0,-1) {};



\draw [>-]
	(.5,1) arc (90:0:1);
\draw [*->]
	(1.5,0) arc (0:-90:1);
\end{tikzpicture}
\\[10pt]\hline
%
%

\begin{tikzpicture}[scale=.5]

	\node [label=left:$\overline{j}$] at (0,1) {};
	\node [label=left:$0$] at (0,0) {};
	\node [label=left:$j$] at (0,-1) {};

	\draw [<-]	(0,1) .. controls (.5,1) and (.5,0) .. (1,0);
	\draw [>-]	(0,0) .. controls (.5,0) and (.5,1) .. (1,1);
	\draw [>-]	(0,-1) -- (1,-1);

	\draw [>-]	(1,1) -- (2,1);	
	\draw [<-]	(1,0) .. controls (1.5,0) and (1.5,-1) .. (2,-1);
	\draw [>-]	(1,-1) .. controls (1.5,-1) and (1.5,0) .. (2,0);
	
	\draw [>->]	(2,1) .. controls (2.5,1) and (2.5,0) .. (3,0);
	\draw [>->]	(2,0) .. controls (2.5,0) and (2.5,1) .. (3,1);
	\draw [<-<]	(2,-1) -- (3,-1);
	
	
	
	\draw [-] (3,1) arc (90:0:1);
	\draw [*-] (4,0) arc (0:-90:1);
	
		
\end{tikzpicture}
&

&
\begin{tikzpicture}[scale=.5]

	\node [label=left:$\overline{j}$] at (0,1) {};
	\node [label=left:$0$] at (0,0) {};
	\node [label=left:$j$] at (0,-1) {};

	\draw [<-]	(0,1) .. controls (.5,1) and (.5,0) .. (1,0);
	\draw [>-]	(0,0) .. controls (.5,0) and (.5,1) .. (1,1);
	\draw [>-]	(0,-1) -- (1,-1);

	\draw [>-]	(1,1) -- (2,1);	
	\draw [<-]	(1,0) .. controls (1.5,0) and (1.5,-1) .. (2,-1);
	\draw [>-]	(1,-1) .. controls (1.5,-1) and (1.5,0) .. (2,0);
	
	\draw [>->]	(2,1) .. controls (2.5,1) and (2.5,0) .. (3,0);
	\draw [<-<]	(2,0) .. controls (2.5,0) and (2.5,1) .. (3,1);
	\draw [>->]	(2,-1) -- (3,-1);
	
	
	
	\draw [-] (3,1) arc (90:0:1);
	\draw [*-] (4,0) arc (0:-90:1);
	
		
\end{tikzpicture}
&
&
\begin{tikzpicture}[scale=.5]

	\node [label=left:$\overline{j}$] at (0,1) {};
	\node [label=left:$0$] at (0,0) {};
	\node [label=left:$j$] at (0,-1) {};

	\draw [<-]	(0,1) .. controls (.5,1) and (.5,0) .. (1,0);
	\draw [>-]	(0,0) .. controls (.5,0) and (.5,1) .. (1,1);
	\draw [>-]	(0,-1) -- (1,-1);

	\draw [<-]	(1,1) -- (2,1);	
	\draw [>-]	(1,0) .. controls (1.5,0) and (1.5,-1) .. (2,-1);
	\draw [>-]	(1,-1) .. controls (1.5,-1) and (1.5,0) .. (2,0);
	
	\draw [<->]	(2,1) .. controls (2.5,1) and (2.5,0) .. (3,0);
	\draw [>-<]	(2,0) .. controls (2.5,0) and (2.5,1) .. (3,1);
	\draw [>->]	(2,-1) -- (3,-1);
	
	
	
	\draw [-] (3,1) arc (90:0:1);
	\draw [*-] (4,0) arc (0:-90:1);
	
		
\end{tikzpicture}
&
\begin{tikzpicture}[scale=.5]
\node [label=left:$\overline{j}$] at (0,1) {};
\node [label=left:$j$] at (0,-1) {};



\draw [<-]
	(0,1) arc (90:0:1);
\draw [*-<]
	(1,0) arc (0:-90:1);
\end{tikzpicture}
\\[10pt]\hline
\end{tabular}
\caption{The two jellyfish relations in type $\mathfrak{B}_*$.}
\label{fig:B*.fish}
\end{figure}

In order to ensure that the quotient for each of these two possibilities is equal, one expands each partition function and compares terms; one finds relations such as
$$a^{(0)}\left(a_1^{(j)}\right)^3b^{(0)}b_1^{(j)}\frac{U^{(j)}}{D^{(j)}} = a^{(0)}\left(a_1^{(j)}\right)^3b^{(0)}b_1^{(j)} \frac{D^{(j)}}{U^{(j)}},$$ from which we deduce $\left(D^{(j)}\right)^2 = \left(U^{(j)}\right)^2.$
\end{proof}

\begin{lemma}[Jellyfish relation, type $\mathfrak{BC}$]\label{le:fish.X}
Assume $U^{(j)}, D^{(j)} \neq 0$.  A necessary and sufficient condition for the ratio of
$$
\mathcal{Z}
\left(
\begin{minipage}{2.3in}
\begin{tikzpicture}
	

	\node [label=left:$\overline{j}$] at (-.5,1) {};
	\node [label=left:$n$] at (-.5,0) {};
	\node [label=left:$j$] at (-.5,-1) {};

	\node [circle,draw,scale=.6] at (-.25,1) {$\alpha$};
	\node [circle,draw,scale=.55] at (-.25,-1) {$\beta$};

	\draw [-]	(0,1) .. controls (.5,1) and (.5,0) .. (1,0);
	\draw [<-]	(-.25,0) .. controls (.5,0) and (.5,1) .. (1,1);
	\draw [-]	(0,-1) -- (1,-1);

	\draw [-]	(1,1) -- (2,1);	
	\draw [-]	(1,0) .. controls (1.5,0) and (1.5,-1) .. (2,-1);
	\draw [-]	(1,-1) .. controls (1.5,-1) and (1.5,0) .. (2,0);
	
	\draw [-]	(2,1) .. controls (2.5,1) and (2.5,0) .. (3,0);
	\draw [-]	(2,0) .. controls (2.5,0) and (2.5,1) .. (3,1);
	\draw [-]	(2,-1) -- (3,-1);
	
	\draw [-]	(3,1) -- (3.5,1);
	\draw [-<]	(3,0) -- (3.5,0);
	\draw [-]	(3,-1) -- (3.5,-1);
	
	
	\draw [-] (3.5,1) arc (90:0:1);
	\draw [*-] (4.5,0) arc (0:-90:1);
	
		
\end{tikzpicture}
\end{minipage}
\right)
\quad \mbox{and} \quad
\text{wt}
\left(
\begin{minipage}{.9in}
\begin{tikzpicture}
\node [label=left:$\overline{j}$] at (-.5,1) {};
\node [label=left:$j$] at (-.5,-1) {};
\node [circle,draw,scale=.6] at (-.25,1) {$\alpha$};
\node [circle,draw,scale=.55] at (-.25,-1) {$\beta$};



\draw [-]
	(0,1) arc (90:0:1);
\draw [*-]
	(1,0) arc (0:-90:1);
\end{tikzpicture}
\end{minipage}
\right)
$$ to be a polynomial in $\Z[i,a^{(n)},b^{(n)},a_1^{(j)},a_2^{(j)},b_1^{(j)},b_2^{(j)}]$ that is independent of $\alpha$ and $\beta$ is that $\left(D^{(j)}\right)^2 = \left(U^{(j)}\right)^2$.  If we set $D^{(j)}=U^{(j)}=1$, then this ratio is equal to $$\left(a^{(n)}a_2^{(j)}+b_1^{(j)}b^{(n)}\right)\left(a_1^{(j)}a^{(n)}+b^{(n)}b_2^{(j)}\right)\left(\left(a_2^{(j)}\right)^2+\left(b_1^{(j)}\right)^2\right).$$  
\end{lemma}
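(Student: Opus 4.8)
The plan is to deduce this statement directly from the type $\mathfrak{B}_*$ jellyfish relation (Lemma~\ref{le:fish.B*}) by means of the arrow-reversing involution of the six-vertex model, rather than repeating the enumeration from scratch. Comparing the two local diagrams, the only difference between the type $\mathfrak{BC}$ jellyfish and the type $\mathfrak{B}_*$ jellyfish is that both boundary stubs of the central row have had their orientations reversed, while every other edge appears identically. This is exactly the effect of reversing every edge orientation throughout the whole diagram. Reversing all arrows carries admissible fillings to admissible fillings, since it preserves the two-in/two-out condition at every vertex, so it furnishes a bijection between the fillings summed on the left-hand side for $\mathfrak{B}_*$ and those for $\mathfrak{BC}$, and likewise between the single configurations on the right-hand side; under it the free boundary labels $\alpha,\beta$ are simply replaced by their reverses, which is harmless since both lemmas quantify over all valid choices.

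Next I would track the effect on the Boltzmann weights. Reversing the orientation at a single vertex sends each of the six vertex types to its reverse, which at the level of weight names realizes the substitution $a_1^{(\bullet)} \leftrightarrow a_2^{(\bullet)}$, $b_1^{(\bullet)} \leftrightarrow b_2^{(\bullet)}$, $c_1^{(\bullet)} \leftrightarrow c_2^{(\bullet)}$ at every spectral index, together with $U^{(j)} \leftrightarrow D^{(j)}$ on the bend and the relabeling $a^{(0)} \mapsto a^{(n)}$, $b^{(0)} \mapsto b^{(n)}$ on the central row. I would then check that this substitution is compatible with the standing hypotheses: it preserves the Symmetry Assumption, since swapping $a_1 \leftrightarrow a_2$ turns the identity $a_1^{(j)} = a_2^{(\overline{j})}$ into itself, and it preserves the free-fermion relation $\Delta^{(j)} = a_1^{(j)}a_2^{(j)} + b_1^{(j)}b_2^{(j)} - c_1^{(j)}c_2^{(j)} = 0$ because each product is symmetric in the interchanged factors.

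With these two ingredients in place, the type $\mathfrak{B}_*$ identity transforms term by term into the type $\mathfrak{BC}$ identity. The criterion $\left(D^{(j)}\right)^2 = \left(U^{(j)}\right)^2$ is invariant under $U^{(j)} \leftrightarrow D^{(j)}$, and the normalization $D^{(j)} = U^{(j)} = 1$ is fixed as well, so the stated necessary and sufficient condition carries over. Applying the weight substitution to the $\mathfrak{B}_*$ quotient sends
\begin{equation*}
\left(a^{(0)}a_2^{(j)}+b_1^{(j)}b^{(0)}\right)\left(a_1^{(j)}a^{(0)}+b^{(0)}b_2^{(j)}\right)\left(\left(a_1^{(j)}\right)^2+\left(b_2^{(j)}\right)^2\right)
\end{equation*}
to
\begin{equation*}
\left(a^{(n)}a_1^{(j)}+b_2^{(j)}b^{(n)}\right)\left(a_2^{(j)}a^{(n)}+b^{(n)}b_1^{(j)}\right)\left(\left(a_2^{(j)}\right)^2+\left(b_1^{(j)}\right)^2\right),
\end{equation*}
which, after swapping the order of the first two factors, is exactly the asserted $\mathfrak{BC}$ quotient.

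The main obstacle I anticipate is purely bookkeeping: confirming rigorously that the local $\mathfrak{BC}$ and $\mathfrak{B}_*$ jellyfish diagrams really are exact arrow-reversals of one another, so that the bijection is well-defined including the orientation of the bend on the right, and that the substitution above is the correct dictionary for how each reversed vertex is re-weighted. Should any of this prove delicate, I would fall back on the direct method used for the preceding jellyfish lemmas: enumerate the two admissible $\{\alpha,\beta\}$ fillings, as in Figure~\ref{fig:B*.fish}, expand both partition functions using the weights of Figures~\ref{fig:six.vertex.model} and~\ref{fig:bonus.vertex.configurations} together with the Symmetry Assumption and the free-fermion condition, equate coefficients to extract $\left(D^{(j)}\right)^2 = \left(U^{(j)}\right)^2$, and read off the quotient after setting $D^{(j)} = U^{(j)} = 1$. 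This computation is routine and, as elsewhere, best delegated to a computer algebra system.
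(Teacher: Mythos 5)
Your primary route is genuinely different from the paper's. The paper proves Lemma~\ref{le:fish.X} exactly the way your fallback does: it declares the proof ``essentially the same'' as that of Lemma~\ref{le:fish.B*}, supplies the list of admissible fillings (Figure~\ref{fig:B**.fish}), and leaves the expansion and comparison of coefficients to the reader (in practice, a computer). Your involution argument instead \emph{derives} the $\mathfrak{BC}$ statement from the $\mathfrak{B}_*$ statement, and its key observations check out: the two local diagrams really are exact arrow-reversals of one another (the only fixed decorations are the two stubs of the central row, which are (in,\,out) in type $\mathfrak{B}_*$ and (out,\,in) in type $\mathfrak{BC}$, all other boundary data being quantified); reversal preserves the two-in/two-out condition, so it is a bijection on fillings; on weights it effects $a_1 \leftrightarrow a_2$, $b_1 \leftrightarrow b_2$, $c_1 \leftrightarrow c_2$, $U^{(j)} \leftrightarrow D^{(j)}$ and relabels $a^{(0)}, b^{(0)}$ as $a^{(n)}, b^{(n)}$; this fixes the criterion $\left(D^{(j)}\right)^2 = \left(U^{(j)}\right)^2$ and carries the $\mathfrak{B}_*$ quotient to the $\mathfrak{BC}$ quotient after reordering its first two factors. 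This is cleaner than the paper's route, requires no new computation, and explains (rather than re-verifies) why Figures~\ref{fig:B*.fish} and~\ref{fig:B**.fish} mirror each other.

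There is, however, one step you did not address, and as written it is a genuine gap: the paper's standing convention (Remark following Proposition~\ref{prop:ybe}, reiterated at the start of Section~\ref{usingYBE}) is the normalization $c_2^{(j)} = 1$, and Lemma~\ref{le:fish.B*} is stated and proved in that gauge. Your substitution sends $c_1 \leftrightarrow c_2$, so the image of a weight system normalized by $c_2 = 1$ satisfies $c_1 = 1$ instead, and Lemma~\ref{le:fish.B*} does not immediately apply to the swapped weights; checking invariance of the Symmetry Assumption and of $\Delta^{(j)}=0$, as you do, is not enough. The gap is fixable: in these local diagrams every vertex is either the bend or an $R$-vertex of Proposition~\ref{prop:ybe}, so the $c$-weights enter only through the crossing weights $c_1^{(x)}c_2^{(y)}$; since the central strand has both stubs fixed it reverses direction an even number of times, and pairing its two $c^{(n)}$-factors, and the outer strands' factors via $c_1^{(j)} = c_1^{(\overline{j})}$, $c_2^{(j)} = c_2^{(\overline{j})}$, shows every filling's weight involves the $c$'s only through the gauge-invariant products $c_1^{(j)}c_2^{(j)}$ and $c_1^{(n)}c_2^{(n)}$, which the free-fermion relation evaluates identically in either normalization. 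With that observation (or by restating both jellyfish lemmas for unnormalized free-fermionic weights) your transfer is complete; without it you would have to retreat to your fallback, which is literally the paper's proof.
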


\begin{proof}
This proof is essentially the same as the previous proof, just with different diagrams.  We will leave the proof to the reader, but provide the relevant diagrams in Figure \ref{fig:B**.fish} so these do not have to be recreated.
\begin{figure}[!ht]
\begin{tabular}{|ccccc|c|} \hline
%
%

\begin{tikzpicture}[scale=.5]

	\node [label=left:$\overline{j}$] at (0,1) {};
	\node [label=left:$n$] at (0,0) {};
	\node [label=left:$j$] at (0,-1) {};

	\draw [<-]	(0,1) .. controls (.5,1) and (.5,0) .. (1,0);
	\draw [<-]	(0,0) .. controls (.5,0) and (.5,1) .. (1,1);
	\draw [>-]	(0,-1) -- (1,-1);

	\draw [<-]	(1,1) -- (2,1);	
	\draw [<-]	(1,0) .. controls (1.5,0) and (1.5,-1) .. (2,-1);
	\draw [>-]	(1,-1) .. controls (1.5,-1) and (1.5,0) .. (2,0);
	
	\draw [<->]	(2,1) .. controls (2.5,1) and (2.5,0) .. (3,0);
	\draw [>-<]	(2,0) .. controls (2.5,0) and (2.5,1) .. (3,1);
	\draw [<-<]	(2,-1) -- (3,-1);
	
	
	
	\draw [-] (3,1) arc (90:0:1);
	\draw [*-] (4,0) arc (0:-90:1);
	
		
\end{tikzpicture}
&

&
\begin{tikzpicture}[scale=.5]
	
	\node [label=left:$\overline{j}$] at (0,1) {};
	\node [label=left:$n$] at (0,0) {};
	\node [label=left:$j$] at (0,-1) {};

	\draw [<-]	(0,1) .. controls (.5,1) and (.5,0) .. (1,0);
	\draw [<-]	(0,0) .. controls (.5,0) and (.5,1) .. (1,1);
	\draw [>-]	(0,-1) -- (1,-1);

	\draw [<-]	(1,1) -- (2,1);	
	\draw [<-]	(1,0) .. controls (1.5,0) and (1.5,-1) .. (2,-1);
	\draw [>-]	(1,-1) .. controls (1.5,-1) and (1.5,0) .. (2,0);
	
	\draw [<-<]	(2,1) .. controls (2.5,1) and (2.5,0) .. (3,0);
	\draw [<-<]	(2,0) .. controls (2.5,0) and (2.5,1) .. (3,1);
	\draw [>->]	(2,-1) -- (3,-1);
	
	
	
	\draw [-] (3,1) arc (90:0:1);
	\draw [*-] (4,0) arc (0:-90:1);
	
		
\end{tikzpicture}
&
&
&
\begin{tikzpicture}[scale=.5]
\node [label=left:$\overline{j}$] at (0,1) {};
\node [label=left:$j$] at (0,-1) {};



\draw [<-]
	(.5,1) arc (90:0:1);
\draw [*-<]
	(1.5,0) arc (0:-90:1);
\end{tikzpicture}
\\[10pt]\hline
%
%

\begin{tikzpicture}[scale=.5]

	\node [label=left:$\overline{j}$] at (0,1) {};
	\node [label=left:$n$] at (0,0) {};
	\node [label=left:$j$] at (0,-1) {};

	\draw [>-]	(0,1) .. controls (.5,1) and (.5,0) .. (1,0);
	\draw [<-]	(0,0) .. controls (.5,0) and (.5,1) .. (1,1);
	\draw [<-]	(0,-1) -- (1,-1);

	\draw [>-]	(1,1) -- (2,1);	
	\draw [<-]	(1,0) .. controls (1.5,0) and (1.5,-1) .. (2,-1);
	\draw [<-]	(1,-1) .. controls (1.5,-1) and (1.5,0) .. (2,0);
	
	\draw [>-<]	(2,1) .. controls (2.5,1) and (2.5,0) .. (3,0);
	\draw [<->]	(2,0) .. controls (2.5,0) and (2.5,1) .. (3,1);
	\draw [<-<]	(2,-1) -- (3,-1);
	
	
	
	\draw [-] (3,1) arc (90:0:1);
	\draw [*-] (4,0) arc (0:-90:1);
	
		
\end{tikzpicture}
&

&
\begin{tikzpicture}[scale=.5]

	\node [label=left:$\overline{j}$] at (0,1) {};
	\node [label=left:$n$] at (0,0) {};
	\node [label=left:$j$] at (0,-1) {};

	\draw [>-]	(0,1) .. controls (.5,1) and (.5,0) .. (1,0);
	\draw [<-]	(0,0) .. controls (.5,0) and (.5,1) .. (1,1);
	\draw [<-]	(0,-1) -- (1,-1);

	\draw [<-]	(1,1) -- (2,1);	
	\draw [>-]	(1,0) .. controls (1.5,0) and (1.5,-1) .. (2,-1);
	\draw [<-]	(1,-1) .. controls (1.5,-1) and (1.5,0) .. (2,0);
	
	\draw [<-<]	(2,1) .. controls (2.5,1) and (2.5,0) .. (3,0);
	\draw [>->]	(2,0) .. controls (2.5,0) and (2.5,1) .. (3,1);
	\draw [<-<]	(2,-1) -- (3,-1);
	
	
	
	\draw [-] (3,1) arc (90:0:1);
	\draw [*-] (4,0) arc (0:-90:1);
	
		
\end{tikzpicture}
&
&
\begin{tikzpicture}[scale=.5]

	\node [label=left:$\overline{j}$] at (0,1) {};
	\node [label=left:$n$] at (0,0) {};
	\node [label=left:$j$] at (0,-1) {};

	\draw [>-]	(0,1) .. controls (.5,1) and (.5,0) .. (1,0);
	\draw [<-]	(0,0) .. controls (.5,0) and (.5,1) .. (1,1);
	\draw [<-]	(0,-1) -- (1,-1);

	\draw [<-]	(1,1) -- (2,1);	
	\draw [>-]	(1,0) .. controls (1.5,0) and (1.5,-1) .. (2,-1);
	\draw [<-]	(1,-1) .. controls (1.5,-1) and (1.5,0) .. (2,0);
	
	\draw [<-<]	(2,1) .. controls (2.5,1) and (2.5,0) .. (3,0);
	\draw [<-<]	(2,0) .. controls (2.5,0) and (2.5,1) .. (3,1);
	\draw [>->]	(2,-1) -- (3,-1);
	
	
	
	\draw [-] (3,1) arc (90:0:1);
	\draw [*-] (4,0) arc (0:-90:1);
	
		
\end{tikzpicture}
&
\begin{tikzpicture}[scale=.5]
\node [label=left:$\overline{j}$] at (0,1) {};
\node [label=left:$j$] at (0,-1) {};



\draw [>-]
	(0,1) arc (90:0:1);
\draw [*->]
	(1,0) arc (0:-90:1);
\end{tikzpicture}
\\[10pt]\hline
\end{tabular}
\caption{The two jellyfish relations for type $\mathfrak{BC}$.}
\label{fig:B**.fish}
\end{figure}
\end{proof}

Since each of the separate models requires slightly different choices for $D^{(j)}/U^{(j)}$, we will summarize the conventions used in the rest of the paper in Table \ref{tab:summarized.bending.weights}.  If one is willing to assume $U^{(j)} \neq 0$ there is no loss of generality in selecting $U^{(j)}=1$, so assume this. Likewise, in the $\mathfrak{C}$ regime we will set $R=1$ without loss of generality. Lemma \ref{le:bend.ybe} shows that if $c_1^{(j)} \neq 0$ for all $j$, then $D^{(j)}$ must be chosen independent of $j$.  With the exception of the $\mathfrak{B}_*$, $\mathfrak{C}$ and $\mathfrak{BC}$, this determines the value of $D^{(j)}$.  In the case of $\mathfrak{B}_*$ and $\mathfrak{BC}$, however, there seems to be some meaningful flexibility in the choice for $D^{(j)}$ in order for the relevant ``jellyfish equations" to hold.  We will see later, however, that for each of these models there is a particular value that leads to a desirable factorization for the corresponding partition function.  It is these choices that we record in Table \ref{tab:summarized.bending.weights}.

\begin{table}[!ht]
\centering
\begin{tabular}{|c|c|c|c|c|c|c|}
\hline
Model&$\mathfrak{B}$&$\mathfrak{B}^*$&$\mathfrak{C}$&$\mathfrak{C}^*$&$\mathfrak{D}$&$\mathfrak{BC}$\\
\hline
$D^{(j)}$&$i$&$1$&$i$&$1$&$1$&$1$\\
\hline
$L$&N/A&N/A&$a^{(0)}-i~b^{(0)}$&N/A&N/A&N/A\\
\hline
\end{tabular}
\caption{Our conventions for assigning weights along the bend, as well as the corner vertex in the $\mathfrak{C}^\lambda$ model.}
\label{tab:summarized.bending.weights}
\end{table}

Finally, we observe that when $c_1^{(j)}=0$, Lemma \ref{le:bend.ybe} no longer ensures that $D^{(j)}/U^{(j)} = D^{(k)}/U^{(k)}$ for $k \neq j$.  However, the various fish and jellyfish equations still impose significant restrictions on the ratio $D^{(j)}/U^{(j)}$ (allowing at most a difference in sign), hence our assumption that the value of $D^{(j)}$ is constant regardless of $j$.

\section{Computing factors of partition functions \label{usingYBE}}

If $\mathfrak{M} \in \{\mathfrak{B},\mathfrak{B}_*,\mathfrak{C},\mathfrak{C}_*,\mathfrak{D},\mathfrak{BC}\}$, recall that $\mathcal{Z}(\mathfrak{M}^\lambda)$ denotes the corresponding partition function.  Our goal for this section is to compute explicit factors of $\mathcal{Z}(\mathfrak{M}^\lambda)$, and to determine $\mathcal{Z}(\mathfrak{M}^\lambda)$ completely when $\lambda = \rho = [n,n-1,\cdots,1]$.  Since our partition functions are homogeneous polynomials in the variables $\{a_1^{(j)},a_2^{(j)},b_1^{(j)},b_2^{(j)},c_1^{(j)},c_2^{(j)}: 1 \leq j \leq n\}$ (and perhaps the variables $a^{(0)}$ and $b^{(0)}$, depending on the family) and we have the relations $a_1^{(j)}a_2^{(j)}+b_1^{(j)}b_2^{(j)}=c_1^{(j)}c_2^{(j)}$, we specialize to the case $c_2^{(j)}=1$ without loss of generality.  Note in particular this means $c_1^{(j)}=a_1^{(j)}a_2^{(j)}+b_1^{(j)}b_2^{(j)}$, that $c_1^{(0)}=\left(a^{(0)}\right)^2 + \left(b^{(0)}\right)^2$ in the $\mathfrak{B}_*$ and $\mathfrak{C}$ families, and that $c_1^{(n)} = \left(a^{(n)}\right)^2 + \left(b^{(n)}\right)^2$ in the $\mathfrak{BC}$ family.  

\begin{lemma}\label{le:transposition.invariance}
For any $\mathfrak{M} \in \{\mathfrak{B},\mathfrak{B}_*,\mathfrak{C},\mathfrak{C}_*,\mathfrak{D},\mathfrak{BC}\}$ the expression $$\left(a_1^{(j)} a_2^{(j+1)} + b_1^{(j+1)}b_2^{(j)}\right) \mathcal{Z}(\mathfrak{M}^\lambda)$$ is invariant under the spectral index exchange $j \leftrightarrow j+1$ .
\end{lemma}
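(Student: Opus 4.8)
The plan is to prove this by a \emph{train argument}: inserting an $R$-matrix between rows $j$ and $j+1$ and dragging it across the model using the Yang--Baxter equation. Since the Boltzmann weights are free-fermionic ($\Delta^{(j)}=\Delta^{(j+1)}=0$), Proposition~\ref{prop:ybe} supplies an $R$-matrix for the pair of spectral indices $(j,j+1)$. I would glue this $R$-matrix to the left end of rows $j$ and $j+1$, producing an augmented model whose partition function I will evaluate in two ways. The far-left boundary edges point inward, so conservation of arrows at the $R$-vertex forces a single admissible $R$-configuration (both horizontal strands passing through), and the augmented partition function becomes a single $R$-weight times a partition function of the original family. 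The essential point, already visible in the figure of Proposition~\ref{prop:ybe}, is that the spectral data travels with the crossing strands; hence inserting the braiding $R$-vertex interchanges the parameters attached to the two physical rows of the grid lying to its right.

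Next I would transport the $R$-vertex through the diagram. Repeated application of Proposition~\ref{prop:ybe} drags it rightward across every full column of the top two rows, each move swapping that column's parameter assignment. Upon reaching the nested u-turn bends I invoke the YBE along the bend (Lemma~\ref{le:bend.ybe}) to move the crossing from the top pair $(j,j+1)$ onto the bottom pair $(\overline{j},\overline{j+1})$; its hypothesis $D^{(j)}/U^{(j)}=D^{(j+1)}/U^{(j+1)}$ holds because our conventions (Table~\ref{tab:summarized.bending.weights}) take $U^{(j)}=1$ with $D^{(j)}$ independent of $j$. Finally, further Yang--Baxter moves carry the crossing leftward along the bottom two rows to their far-left end. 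For the families $\mathfrak{C}$, $\mathfrak{C}_*$, and $\mathfrak{D}$ the half-column is crossed on the way, but since it meets both $\overline{j}$ and $\overline{j+1}$ this is again an ordinary instance of the Yang--Baxter equation; because $j\geq 1$ the crossing never meets the central row or the corner vertex, so no fish or jellyfish relation is needed here.

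It then remains to read off the two evaluations. With the $R$-vertex at the far left of the top pair, the forced configuration contributes the weight $a_1^{(j+1)}a_2^{(j)}+b_1^{(j)}b_2^{(j+1)}$, while the remaining grid is the model with the roles of $j$ and $j+1$ (hence also of $\overline{j}$ and $\overline{j+1}$) interchanged. With the $R$-vertex transported to the far left of the bottom pair, the forced configuration contributes a weight in the barred indices which, after the symmetry conditions $a_1^{(\overline{m})}=a_2^{(m)}$ and $b_1^{(\overline{m})}=b_2^{(m)}$, simplifies to $a_1^{(j)}a_2^{(j+1)}+b_1^{(j+1)}b_2^{(j)}$, the remaining grid now appearing in its original order. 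Equating the two evaluations gives
\[
\left(a_1^{(j)}a_2^{(j+1)}+b_1^{(j+1)}b_2^{(j)}\right)\mathcal{Z}(\mathfrak{M}^\lambda)
=\left(a_1^{(j+1)}a_2^{(j)}+b_1^{(j)}b_2^{(j+1)}\right)\mathcal{Z}(\mathfrak{M}^\lambda)\big|_{j\leftrightarrow j+1},
\]
which is precisely the asserted invariance.

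The step I expect to be most delicate is the bookkeeping of the strand-to-row correspondence as the crossing passes through the u-turn: one must check that a single crossing routed around the bend interchanges the parameters of the top pair and of the bottom pair simultaneously and consistently, so that \emph{both} evaluations are genuine partition functions of the family rather than hybrids with mismatched rows. Verifying that the forced boundary weight at the bottom end really reduces, via the symmetry conditions, to the claimed factor $a_1^{(j)}a_2^{(j+1)}+b_1^{(j+1)}b_2^{(j)}$ is the other point requiring care; everything else is a routine iteration of the two Yang--Baxter-type moves already established.
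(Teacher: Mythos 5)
Your proposal is correct and takes essentially the same route as the paper's own proof: attach the crossing to the left of rows $j$ and $j+1$, drag it rightward through the columns by repeated use of Proposition~\ref{prop:ybe}, pass it onto the pair $(\overline{j},\overline{j+1})$ via Lemma~\ref{le:bend.ybe}, drag it back to the far left, and equate the two forced boundary evaluations, using the symmetry assumptions to rewrite the barred factor. The only (immaterial) difference is a bookkeeping convention --- the paper labels the external stubs so that the initial evaluation carries the original model and the final one the swapped model, while you do the reverse --- and both conventions yield the identical identity $\left(a_1^{(j)}a_2^{(j+1)}+b_1^{(j+1)}b_2^{(j)}\right)\mathcal{Z}(\mathfrak{M}^\lambda)=\left(a_1^{(j+1)}a_2^{(j)}+b_1^{(j)}b_2^{(j+1)}\right)\widehat{\mathcal{Z}}(\mathfrak{M}^\lambda)$.
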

\begin{remark*}
In all of our diagrams, we include a dashed half-column since not all models include this line, as well as boxes along the top boundary to indicate that these orientations are determined by $\lambda$ and the underlying family. (We will use similar notation in subsequent lemmas as well.)

\end{remark*}
\begin{proof}
We claim that 
\begin{equation}\label{eq:twisted.diagram.type.B}
\left(a_1^{(j)}a_2^{(j+1)}+b_1^{(j+1)}b_2^{(j)}\right)\mathcal{Z}(\mathfrak{M}^\lambda) = 
\mathcal{Z}
\left(
\begin{minipage}{2.7in}
  \begin{tikzpicture}[scale=0.25]

\node [label=left:$1$] at (0,17) {};
\node [label=left:${j}$] at (-4,11) {};
\node [label=left:${j+1}$] at (-4,13) {};
\node [label=left:$\overline{{j+1}}$] at (0,7) {};
\node [label=left:$\overline{j}$] at (0,5) {};
\node [label=left:$\overline{1}$] at (0,1) {};
\node [rectangle,draw,scale=1,style=densely dashed] at (1,18.7) {};
\node [rectangle,draw,scale=1,style=densely dashed] at (5,18.7) {};
\node [rectangle,draw,scale=1,style=densely dashed] at (7,18.7) {};
\node [rectangle,draw,scale=1,style=densely dashed] at (9,9) {};
\node at (0,15.5) {$\vdots$};
\node at (0,9.5) {$\vdots$};
\node at (0,3.5) {$\vdots$};
\node at (3,18) {$\cdots$};

\draw [>->] (-4,13) .. controls (-2,13) and (-2,11) .. (0,11);
\draw [>->] (-4,11) .. controls (-2,11) and (-2,13) .. (0,13);

\draw [>-] (0,17) -- (9,17);
\draw [>-] (0,13) -- (9,13);
\draw [>-] (0,11) -- (9,11);
\draw [>-] (0,7) -- (9,7);
\draw [>-] (0,5) -- (9,5);
\draw [>-] (0,1) -- (9,1);

\draw [->] (1,18) -- (1,0);
\draw [->] (5,18) -- (5,0);
\draw [->] (7,18) -- (7,0);
\draw [style=densely dashed,->] (9,8.25) -- (9,0);

\draw [-]
	(9,17) arc (90:0:8);
\draw [*-]
	(17,9) arc (0:-90:8);

\draw [-]
	(9,13) arc (90:0:4);
\draw [*-]
	(13,9) arc (0:-90:4);

\draw [-]
	(9,11) arc (90:0:2);
\draw [*-]
	(11,9) arc (0:-90:2);

\end{tikzpicture}
\end{minipage}
\right)
\end{equation}
This follows because each filling of this diagram yields an admissible configuration of $\mathfrak{M}^\lambda$ that is attached to the twisting diagram that satisfies 
$$\text{wt}\left(\begin{minipage}{1.5in}
\begin{centering}\begin{tikzpicture}[scale=0.25] 
\node [label=left:${j}$] at (-4,-1) {};
\node [label=left:${j+1}$] at (-4,1) {};
\node [label=right:${j}$] at (0,1) {};
\node [label=right:${j+1}$] at (0,-1) {};
\draw [>->] (-4,1) .. controls (-2,1) and (-2,-1) .. (0,-1);
\draw [>->] (-4,-1) .. controls (-2,-1) and (-2,1) .. (0,1);
\end{tikzpicture}\end{centering}\end{minipage}
\right) = a_1^{(j)}a_2^{(j+1)}+b_1^{(j)}b_2^{(j+1)}.$$

Iteratively applying the Yang-Baxter equation, together with Lemma \ref{le:bend.ybe} along the bend, tells us that the partition function for diagram (\ref{eq:twisted.diagram.type.B}) is equal to 
$$
\mathcal{Z}\left(
\begin{minipage}{2.3in}
\begin{centering}
  \begin{tikzpicture}[scale=0.25]

\node [label=left:$1$] at (0,17) {};
\node [label=left:${j}$] at (0,11) {};
\node [label=left:${j+1}$] at (0,13) {};
\node [label=left:$\overline{{j+1}}$] at (0,7) {};
\node [label=left:$\overline{j}$] at (0,5) {};
\node [label=left:$\overline{1}$] at (0,1) {};
\node [rectangle,draw,scale=1,style=densely dashed] at (1,18.7) {};
\node [rectangle,draw,scale=1,style=densely dashed] at (5,18.7) {};
\node [rectangle,draw,scale=1,style=densely dashed] at (7,18.7) {};
\node [rectangle,draw,scale=1,style=densely dashed] at (9,9) {};
\node at (0,15.5) {$\vdots$};
\node at (0,9.5) {$\vdots$};
\node at (0,3.5) {$\vdots$};
\node at (3,18) {$\cdots$};

\draw [-] (1,13) .. controls (3,13) and (3,11) .. (5,11);
\draw [-] (1,11) .. controls (3,11) and (3,13) .. (5,13);

\draw [>-] (0,17) -- (9,17);
\draw [>-] (0,13) -- (1,13);
\draw [-] (5,13) -- (9,13);
\draw [>-] (0,11) -- (1,11);
\draw [-] (5,11) -- (9,11);
\draw [>-] (0,7) -- (9,7);
\draw [>-] (0,5) -- (9,5);
\draw [>-] (0,1) -- (9,1);

\draw [->] (1,18) -- (1,0);
\draw [->] (5,18) -- (5,0);
\draw [->] (7,18) -- (7,0);
\draw [style=densely dashed,->] (9,8.25) -- (9,0);

\draw [-]
	(9,17) arc (90:0:8);
\draw [*-]
	(17,9) arc (0:-90:8);

\draw [-]
	(9,13) arc (90:0:4);
\draw [*-]
	(13,9) arc (0:-90:4);

\draw [-]
	(9,11) arc (90:0:2);
\draw [*-]
	(11,9) arc (0:-90:2);

\end{tikzpicture}
\end{centering}\end{minipage}
\right)
= \cdots = 
\mathcal{Z}\left(
\begin{minipage}{2.3in}
\begin{centering}
  \begin{tikzpicture}[scale=0.25]

\node [label=left:$1$] at (0,17) {};
\node [label=left:${j}$] at (0,11) {};
\node [label=left:${j+1}$] at (0,13) {};
\node [label=left:$\overline{{j+1}}$] at (0,7) {};
\node [label=left:$\overline{j}$] at (0,5) {};
\node [label=left:$\overline{1}$] at (0,1) {};
\node [rectangle,draw,scale=1,style=densely dashed] at (1,18.7) {};
\node [rectangle,draw,scale=1,style=densely dashed] at (5,18.7) {};
\node [rectangle,draw,scale=1,style=densely dashed] at (7,18.7) {};
\node [rectangle,draw,scale=1,style=densely dashed] at (9,9) {};
\node at (0,15.5) {$\vdots$};
\node at (0,9.5) {$\vdots$};
\node at (0,3.5) {$\vdots$};
\node at (3,18) {$\cdots$};

\draw [-] (1,5) .. controls (3,5) and (3,7) .. (5,7);
\draw [-] (1,7) .. controls (3,7) and (3,5) .. (5,5);

\draw [>-] (0,17) -- (9,17);
\draw [>-] (0,13) -- (9,13);
\draw [>-] (0,11) -- (9,11);
\draw [>-] (0,7) -- (1,7);
\draw [-] (5,7) -- (9,7);
\draw [>-] (0,5) -- (1,5);
\draw [-] (5,5) -- (9,5);
\draw [>-] (0,1) -- (9,1);

\draw [->] (1,18) -- (1,0);
\draw [->] (5,18) -- (5,0);
\draw [->] (7,18) -- (7,0);
\draw [style=densely dashed,->] (9,8.25) -- (9,0);

\draw [-]
	(9,17) arc (90:0:8);
\draw [*-]
	(17,9) arc (0:-90:8);

\draw [-]
	(9,13) arc (90:0:4);
\draw [*-]
	(13,9) arc (0:-90:4);

\draw [-]
	(9,11) arc (90:0:2);
\draw [*-]
	(11,9) arc (0:-90:2);

\end{tikzpicture}
\end{centering}\end{minipage}
\right)
$$

One last application of the Yang-Baxter equation shows us that this quantity is equal to
$$
\mathcal{Z}
\left(  
\begin{minipage}{2.7in}
\begin{centering}
\begin{tikzpicture}[scale=0.25]

\node [label=left:$1$] at (0,17) {};
\node [label=left:${j+1}$] at (0,13) {};
\node [label=left:${j}$] at (0,11) {};
\node [label=left:$\overline{{j+1}}$] at (-4,7) {};
\node [label=left:$\overline{j}$] at (-4,5) {};
\node [label=left:$\overline{1}$] at (0,1) {};
\node [rectangle,draw,scale=1,style=densely dashed] at (1,18.7) {};
\node [rectangle,draw,scale=1,style=densely dashed] at (5,18.7) {};
\node [rectangle,draw,scale=1,style=densely dashed] at (7,18.7) {};
\node [rectangle,draw,scale=1,style=densely dashed] at (9,9) {};
\node at (0,15.5) {$\vdots$};
\node at (0,9.5) {$\vdots$};
\node at (0,3.5) {$\vdots$};
\node at (3,18) {$\cdots$};

\draw [>->] (-4,5) .. controls (-2,5) and (-2,7) .. (0,7);
\draw [>->] (-4,7) .. controls (-2,7) and (-2,5) .. (0,5);

\draw [>-] (0,17) -- (9,17);
\draw [>-] (0,13) -- (9,13);
\draw [>-] (0,11) -- (9,11);
\draw [>-] (0,7) -- (9,7);
\draw [>-] (0,5) -- (9,5);
\draw [>-] (0,1) -- (9,1);

\draw [->] (1,18) -- (1,0);
\draw [->] (5,18) -- (5,0);
\draw [->] (7,18) -- (7,0);
\draw [style=densely dashed,->] (9,8.25) -- (9,0);

\draw [-]
	(9,17) arc (90:0:8);
\draw [*-]
	(17,9) arc (0:-90:8);

\draw [-]
	(9,13) arc (90:0:4);
\draw [*-]
	(13,9) arc (0:-90:4);

\draw [-]
	(9,11) arc (90:0:2);
\draw [*-]
	(11,9) arc (0:-90:2);
\end{tikzpicture}
\end{centering}
\end{minipage}
\right)
$$ (Notice in particular that in the twisted portion of the diagram at the bottom, the fact that the arrows on the left side of the twist both point inward forces the arrows on the right side of the twist to point outward.) 

This latter expression is simply $$\left(a_1^{(\overline{j})}a_2^{(\overline{j+1})} + b_1^{(\overline{j+1})}b_2^{(\overline{j})}\right) \widehat{\mathcal{Z}}(\mathfrak{M}^\lambda) = \left(a_1^{(j+1)}a_2^{(j)}+b_1^{(j)}b_2^{(j+1)}\right)\widehat{\mathcal{Z}}(\mathfrak{M}^\lambda),$$ where here $\widehat{\mathcal{Z}}(\mathfrak{M}^\lambda)$ denotes the partition function that one gets by exchanging the role of spectral indices $j$ and $j+1$ in the original family. Hence we have $$\left(a_1^{(j)} a_2^{(j+1)} + b_1^{(j+1)}b_2^{(j)}\right) \mathcal{Z}(\mathfrak{M}^\lambda) = \left(a_1^{(j+1)}a_2^{(j)}+b_1^{(j)}b_2^{(j+1)}\right)\widehat{\mathcal{Z}}(\mathfrak{M}^\lambda).$$  Clearly the right had side of this equation is the result of the exchange of spectral indices  $j \leftrightarrow j+1$, and so we have the desired invariance.
%
\end{proof}

\begin{lemma}\label{le:fish.invariance}
When using the bending weights from Table \ref{tab:summarized.bending.weights}, the quantities 
$$
\begin{array}{ll}
 \mathcal{Z}(\mathfrak{B}^\lambda) \left(a_1^{(n)} + i~b_2^{(n)}\right)&\\[10pt]
\mathcal{Z}(\mathfrak{C}_*^\lambda)\left(\left(a_1^{(n)}\right)^2+\left(b_2^{(n)}\right)^2\right) &\\[10pt]
\mathcal{Z}(\mathfrak{D}^\lambda)\left(\left(a_1^{(n)}\right)^2+\left(b_2^{(n)}\right)^2\right)&\qquad \text{(if $1 \not\in\lambda$)}\\[10pt]
\mathcal{Z}(\mathfrak{D}^\lambda) &\qquad\text{(if $1 \in\lambda$)}
\end{array}$$
are each invariant under the spectral index inversion $n \leftrightarrow \overline{n}$.
\end{lemma}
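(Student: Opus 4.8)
The plan is to adapt the crossing-propagation argument of Lemma~\ref{le:transposition.invariance} to the innermost pair of spectral indices $n,\overline{n}$, which are joined by the innermost u-turn. There the passage of a crossing \emph{through} the bends was governed by Lemma~\ref{le:bend.ybe}; here the analogous move across the single innermost u-turn is governed by the relevant Fish relation (Lemma~\ref{le:fish.B} for $\mathfrak{B}$, Lemma~\ref{le:fish.C*} for $\mathfrak{C}_*$ and for $\mathfrak{D}$ with $1\notin\lambda$, and Lemma~\ref{le:fish.D} for $\mathfrak{D}$ with $1\in\lambda$). Since the innermost bend of $\mathfrak{M}^\lambda$ joins row $n$ on top to row $\overline{n}$ on the bottom, these relations are invoked at spectral index $\overline{n}$, and their scalars are to be rewritten in terms of index $n$ through the Symmetry Assumptions.

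Concretely, I would insert an $R$-vertex crossing the two innermost rows just to the left of the innermost bend and evaluate the resulting partition function two ways. Collapsing the crossing into the bend by the appropriate Fish relation produces the Fish scalar $C$ (at index $\overline{n}$) times the untouched bulk, namely $C\,\mathcal{Z}(\mathfrak{M}^\lambda)$. Propagating the same crossing leftward across every column by the Yang--Baxter equation (Proposition~\ref{prop:ybe}, available because $\Delta^{(n)}=\Delta^{(\overline{n})}=0$) carries it to the far-left boundary, where both incoming arrows point inward; this fixes the unique both-in/both-out state, with weight
$$ w \;=\; a_1^{(\overline{n})}a_2^{(n)} + b_1^{(n)}b_2^{(\overline{n})} \;=\; \bigl(a_2^{(n)}\bigr)^2 + \bigl(b_1^{(n)}\bigr)^2, $$
and---because turning the crossing through the u-turn reverses the two innermost lines, just as passage through the bends crosses the bend-connections in Lemma~\ref{le:transposition.invariance}---leaves the $n\leftrightarrow\overline{n}$ exchanged bulk $\widehat{\mathcal{Z}}(\mathfrak{M}^\lambda)$. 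Equating the two evaluations gives the master identity $C\,\mathcal{Z}(\mathfrak{M}^\lambda) = w\,\widehat{\mathcal{Z}}(\mathfrak{M}^\lambda)$.

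The four cases then follow by reading off $C$ through the Symmetry Assumptions and cancelling common factors. In type $\mathfrak{B}$ one has $C=(a_2^{(n)}-i\,b_1^{(n)})(a_1^{(n)}+i\,b_2^{(n)})$, which shares the factor $a_2^{(n)}-i\,b_1^{(n)}$ with $w=(a_2^{(n)}-i\,b_1^{(n)})(a_2^{(n)}+i\,b_1^{(n)})$; cancelling yields $(a_1^{(n)}+i\,b_2^{(n)})\mathcal{Z} = (a_2^{(n)}+i\,b_1^{(n)})\widehat{\mathcal{Z}}$, i.e.\ the invariance of $\mathcal{Z}(\mathfrak{B}^\lambda)(a_1^{(n)}+i\,b_2^{(n)})$. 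For $\mathfrak{C}_*$ and for $\mathfrak{D}$ with $1\notin\lambda$ one gets $C=(a_1^{(n)})^2+(b_2^{(n)})^2$ against $w=(a_2^{(n)})^2+(b_1^{(n)})^2$, so the master identity is already the asserted invariance of the degree-two product. For $\mathfrak{D}$ with $1\in\lambda$ the Fish scalar is $C=(a_2^{(n)})^2+(b_1^{(n)})^2=w$, the factors cancel entirely, and $\mathcal{Z}(\mathfrak{D}^\lambda)$ is itself invariant.

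The main obstacle is precisely the step that produces $\widehat{\mathcal{Z}}$ rather than $\mathcal{Z}$: one must show that turning the crossing through the lone innermost u-turn genuinely reverses the two innermost spectral lines---so that the boundary evaluation sees the exchanged bulk---rather than collapsing the two evaluations into the tautology $C=w$. Making this precise requires tracking the half-turn data at the bend together with the Symmetry Assumptions on $U^{(j)},D^{(j)}$, and it is exactly here that the three Fish variants diverge: the extra right-hand half-column present in $\mathfrak{C}_*$ and $\mathfrak{D}$ must be slid past before the crossing meets the bend, and whether $1\in\lambda$ changes which half-column configuration---and hence which Fish scalar---is available. Once this reversal is justified in each geometry, the factor bookkeeping above is routine.
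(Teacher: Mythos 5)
Your proposal is correct and is essentially the paper's own argument run in the mirror direction: the paper attaches the crossing at the far-left boundary, where it contributes $\bigl(a_1^{(n)}\bigr)^2+\bigl(b_2^{(n)}\bigr)^2$ times $\mathcal{Z}(\mathfrak{M}^\lambda)$, and pushes it rightward via the Yang--Baxter equation until the appropriate Fish relation (Lemma~\ref{le:fish.B}, \ref{le:fish.C*} or \ref{le:fish.D}) converts the configuration into a scalar times $\widehat{\mathcal{Z}}(\mathfrak{M}^\lambda)$, whereas you start at the bend and push leftward; the two resulting master identities coincide after cancelling the common factor, and your case-by-case bookkeeping (which Fish lemma applies, the scalars rewritten via the Symmetry Assumptions, and the final cancellations) agrees with the paper's.
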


\begin{proof}

Let $\mathfrak{M} \in \{\mathfrak{B},\mathfrak{C}_*,\mathfrak{D}\}$. Notice that we have
\begin{equation*}
\left(\left(a_1^{(n)}\right)^2+\left(b_2^{(n)}\right)^2\right)\mathcal{Z}(\mathfrak{M}^\lambda) = 
\mathcal{Z}\left(
\begin{minipage}{2.45in}
\begin{centering}
  \begin{tikzpicture}[scale=0.25]

\node [label=left:$1$] at (0,17.5) {};
\node [label=left:${n-1}$] at (0,13.5) {};
\node [label=left:$\overline{{n}}$] at (-4,11.5) {};
\node [label=left:${n}$] at (-4,9) {};
\node [label=left:$\overline{{n-1}}$] at (0,7) {};
\node [label=left:$\overline{1}$] at (0,3) {};
\node [rectangle,draw,scale=1,style=densely dashed] at (1,19.2) {};
\node [rectangle,draw,scale=1,style=densely dashed] at (5,19.2) {};
\node [rectangle,draw,scale=1,style=densely dashed] at (7,19.2) {};
\node [rectangle,draw,scale=1,style=densely dashed] at (9,10.25) {};
\node at (0,15.5) {$\vdots$};
\node at (0,5) {$\vdots$};
\node at (3,18.5) {$\cdots$};

\draw [>->] (-4,9) .. controls (-2,9) and (-2,11.5) .. (0,11.5);
\draw [>->] (-4,11.5) .. controls (-2,11.5) and (-2,9) .. (0,9);

\draw [>-] (0,17.5) -- (10,17.5);
\draw [>-] (0,13.5) -- (10,13.5);
\draw [>-] (0,11.5) -- (10,11.5);
\draw [>-] (0,9) -- (10,9);
\draw [>-] (0,7) -- (10,7);
\draw [>-] (0,3) -- (10,3);

\draw [->] (1,18.5) -- (1,2);
\draw [->] (5,18.5) -- (5,2);
\draw [->] (7,18.5) -- (7,2);
\draw [style=densely dashed,->] (9,9.25) -- (9,2);

\draw [-]
	(10,17.5) arc (90:0:7.25);
\draw [*-]
	(17.25,10.25) arc (0:-90:7.25);

\draw [-]
	(10,13.5) arc (90:0:3.25);
\draw [*-]
	(13.25,10.25) arc (0:-90:3.25);

\draw [-]
	(10,11.5) arc (90:0:1.25);
\draw [*-]
	(11,10.25) arc (0:-90:1.25);

\end{tikzpicture}
\end{centering}
\end{minipage}
\right)
\end{equation*}

The Yang-Baxter equation allows us to ``pass the twist" through the diagram without disturbing the overall partition function, until eventually we arrive at
$$\mathcal{Z}\left(
\begin{minipage}{2.45in}
\begin{centering}
  \begin{tikzpicture}[scale=0.25]

\node [label=left:$1$] at (0,17.5) {};
\node [label=left:${n-1}$] at (0,13.5) {};
\node [label=left:$\overline{{n}}$] at (-4,11.5) {};
\node [label=left:${n}$] at (-4,9) {};
\node [label=left:$\overline{{n-1}}$] at (0,7) {};
\node [label=left:$\overline{1}$] at (0,3) {};
\node [rectangle,draw,scale=1,style=densely dashed] at (1,19.2) {};
\node [rectangle,draw,scale=1,style=densely dashed] at (5,19.2) {};
\node [rectangle,draw,scale=1,style=densely dashed] at (7,19.2) {};
\node [rectangle,draw,scale=1,style=densely dashed] at (9,10.25) {};
\node at (0,15.5) {$\vdots$};
\node at (0,5) {$\vdots$};
\node at (3,18.5) {$\cdots$};

\draw [>->] (-4,9) .. controls (-2,9) and (-2,11.5) .. (0,11.5);
\draw [>->] (-4,11.5) .. controls (-2,11.5) and (-2,9) .. (0,9);

\draw [>-] (0,17.5) -- (10,17.5);
\draw [>-] (0,13.5) -- (10,13.5);
\draw [>-] (0,11.5) -- (10,11.5);
\draw [>-] (0,9) -- (10,9);
\draw [>-] (0,7) -- (10,7);
\draw [>-] (0,3) -- (10,3);

\draw [->] (1,18.5) -- (1,2);
\draw [->] (5,18.5) -- (5,2);
\draw [->] (7,18.5) -- (7,2);
\draw [style=densely dashed,->] (9,9.5) -- (9,2);

\draw [-]
	(10,17.5) arc (90:0:7.25);
\draw [*-]
	(17.25,10.25) arc (0:-90:7.25);

\draw [-]
	(10,13.5) arc (90:0:3.25);
\draw [*-]
	(13.25,10.25) arc (0:-90:3.25);

\draw [-]
	(10,11.5) arc (90:0:1.25);
\draw [*-]
	(11.25,10.25) arc (0:-90:1.25);

\end{tikzpicture}
\end{centering}
\end{minipage}
\right)
 = \cdots = 
\mathcal{Z}\left(\begin{minipage}{2.35in}
\begin{centering}
  \begin{tikzpicture}[scale=0.25]

\node [label=left:$1$] at (0,17.5) {};
\node [label=left:${n-1}$] at (0,13.5) {};
\node [label=left:$\overline{{n}}$] at (0,11.5) {};
\node [label=left:${n}$] at (0,9) {};
\node [label=left:$\overline{{n-1}}$] at (0,7) {};
\node [label=left:$\overline{1}$] at (0,3) {};
\node [rectangle,draw,scale=1,style=densely dashed] at (1,19.2) {};
\node [rectangle,draw,scale=1,style=densely dashed] at (5,19.2) {};
\node [rectangle,draw,scale=1,style=densely dashed] at (7,19.2) {};
\node [rectangle,draw,scale=1,style=densely dashed] at (9.5,10.25) {};
\node at (0,15.5) {$\vdots$};
\node at (0,5) {$\vdots$};
\node at (3,18.5) {$\cdots$};

\draw [-] (7.5,9) .. controls (8,9) and (8,11.5) .. (9.5,11.5);
\draw [-] (7.5,11.5) .. controls (8,11.5) and (8,9) .. (9.5,9);

\draw [>-] (0,17.5) -- (10,17.5);
\draw [>-] (0,13.5) -- (10,13.5);
\draw [>-] (0,11.5) -- (7,11.5);
\draw [>-] (0,9) -- (7,9);
\draw [>-] (0,7) -- (10,7);
\draw [>-] (0,3) -- (10,3);

\draw [->] (1,18.5) -- (1,2);
\draw [->] (5,18.5) -- (5,2);
\draw [->] (7,18.5) -- (7,2);
\draw [style=densely dashed,->] (9.5,9.5) -- (9.5,2);

\draw [-]
	(10,17.5) arc (90:0:7.25);
\draw [*-]
	(17.25,10.25) arc (0:-90:7.25);

\draw [-]
	(10,13.5) arc (90:0:3.25);
\draw [*-]
	(13.25,10.25) arc (0:-90:3.25);

\draw [-]
	(10,11.5) arc (90:0:1.25);
\draw [*-]
	(11.25,10.25) arc (0:-90:1.25);

\end{tikzpicture}
\end{centering}
\end{minipage}
\right)
$$
\comment{
Suppose that we have a filling $F$ of this diagram except at the inner portion of the central twist; zooming in on this component, we have:

$$
\begin{tikzpicture}
\node [label=left:$\overline{n}$] at (-0.5,1) {};
\node [label=left:$n$] at (-0.5,0) {};

\node [circle,draw,scale=.6] at (-.25,1) {$\alpha$};
\node [circle,draw,scale=0.55] at (-.25,0) {$\beta$};
\node [circle,draw,scale=0.55,style=densely dashed] at (1.25,-.75) {$\gamma$};
\node [rectangle,draw,scale=.6,style=densely dashed] at (1.25,.5) {$\delta$};


\draw [-] (0,0) .. controls (0.5,0) and (.5,1) .. (1,1);
\draw [-] (0,1) .. controls (0.5,1) and (.5,0) .. (1,0);
\draw [style=densely dashed,-] (1.25,.25) -- (1.25,-.5);
\draw [-] (1,1) -- (1.5,1);
\draw [-] (1,0) -- (1.5,0);

\draw [-]
	(1.5,1) arc (90:0:.5);
\draw [*-]
	(2,.5) arc (0:-90:.5);
\end{tikzpicture}
$$}

According to Lemma \ref{le:fish.B}, \ref{le:fish.C*} or \ref{le:fish.D} (depending on the model), there exists a polynomial $P$ such that this latter partition function equals $P \widehat{\mathcal{Z}}(\mathfrak{M}^\lambda)$, where $\widehat{Z}(\mathfrak{M}^\lambda)$ is the partition function for the model, but with the roles of spectral indices $n$ and $\overline{n}$ reversed.

Hence we have
$$\left(\left(a_1^{(n)}\right)^2+\left(b_2^{(n)}\right)^2\right)\mathcal{Z}(\mathfrak{M}^\lambda) =  \left\{\begin{array}{ll}
\left(a_2^{(n)}+i~b_1^{(n)}\right)\left(a_1^{(n)}-i~b_2^{(n)}\right)~\widehat{\mathcal{Z}}(\mathfrak{B}^\lambda) &\quad \mbox{if } \mathfrak{M}^\lambda = \mathfrak{B}^\lambda, \\ [10pt]
\left(\left(a_2^{(n)}\right)^2+\left(b_1^{(n)}\right)^2\right)~\widehat{\mathcal{Z}}(\mathfrak{C}_*^\lambda) &\quad \mbox{if } \mathfrak{M}^\lambda = \mathfrak{C}_*^\lambda, \\ [10pt]
\left(\left(a_2^{(n)}\right)^2+\left(b_1^{(n)}\right)^2\right)~\widehat{\mathcal{Z}}(\mathfrak{D}^\lambda) & \mkern-18mu \mbox{if } \mathfrak{M}^\lambda = \mathfrak{D}^\lambda, 1 \not\in \lambda, \\ [10pt]
\left(\left(a_1^{(n)}\right)^2+\left(b_2^{(n)}\right)^2\right)~\widehat{\mathcal{Z}}(\mathfrak{D}^\lambda) & \mkern-18mu \mbox{if } \mathfrak{M}^\lambda = \mathfrak{D}^\lambda, 1 \in \lambda. \\
\end{array}\right.
$$

After canceling like factors in each case, we get
\begin{align*}
\left(a_1^{(n)}+i~b_2^{(n)}\right)\mathcal{Z}(\mathfrak{B}^\lambda) &=  \left(a_2^{(n)}+i~b_1^{(n)}\right)\widehat{\mathcal{Z}}(\mathfrak{B}^\lambda)\\[10pt]
\left(\left(a_1^{(n)}\right)^2+\left(b_2^{(n)}\right)^2\right)\mathcal{Z}(\mathfrak{C}_*^\lambda) &=\left(\left(a_2^{(n)}\right)^2+\left(b_1^{(n)}\right)^2\right)~\widehat{\mathcal{Z}}(\mathfrak{C}_*^\lambda)\\[10pt]
\left(\left(a_1^{(n)}\right)^2+\left(b_2^{(n)}\right)^2\right)\mathcal{Z}(\mathfrak{D}^\lambda) &= \left(\left(a_2^{(n)}\right)^2+\left(b_1^{(n)}\right)^2\right)~\widehat{\mathcal{Z}}(\mathfrak{D}^\lambda)&\text{(if $1 \not\in \lambda$)}\\[10pt]
\mathcal{Z}(\mathfrak{D}^\lambda) &= \widehat{\mathcal{Z}}(\mathfrak{D}^\lambda) &\text{(if $1 \in \lambda$).}
\end{align*}
  In each case, the terms on either side of the equation are easily seen to be equal under the exchange $n \leftrightarrow \overline{n}$.  This gives the desired result.
\end{proof}

\begin{lemma}\label{le:fish.invariance.C}
When using the bending weights from Table \ref{tab:summarized.bending.weights}, the quantities 
\begin{align*}
&\left(a_1^{(n)}a^{(0)} + b^{(0)} b_2^{(n)}\right)\mathcal{Z}(\mathfrak{B}_*^\lambda)\\
&\left(a_1^{(n)}a^{(0)} + b^{(0)} b_2^{(n)}\right)\left(a_1^{(n)}+ i~b_2^{(n)}\right)\mathcal{Z}(\mathfrak{C}^\lambda)\\
\end{align*}
are each invariant under the spectral index inversion $n \leftrightarrow \overline{n}$.  Likewise, the quantity 
$$
\left(a_1^{(n-1)}a^{(n)} + b^{(n)} b_2^{(n-1)}\right)\left(\left(a_1^{(n-1)}\right)^2+\left(b_2^{(n-1)}\right)^2\right)\mathcal{Z}(\mathfrak{BC}^\lambda)\\
$$
is invariant under $n-1 \leftrightarrow \overline{n-1}$.
\end{lemma}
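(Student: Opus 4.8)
The plan is to run the proof of Lemma~\ref{le:fish.invariance} essentially verbatim, replacing the fish resolution at the bend by the \emph{jellyfish} resolution: the three families $\mathfrak{B}_*,\mathfrak{C},\mathfrak{BC}$ are exactly those possessing a self-paired central row ($0=\overline{0}$ for $\mathfrak{B}_*,\mathfrak{C}$, and $n=\overline{n}$ for $\mathfrak{BC}$), so the relevant local move is supplied by Lemmas~\ref{le:fish.B*}, \ref{le:fish.C}, and \ref{le:fish.X} rather than by Lemmas~\ref{le:fish.B}--\ref{le:fish.D}. Fix the model, let $s$ denote its central index, and let $j$ be the index being inverted ($j=n$ for $\mathfrak{B}_*,\mathfrak{C}$, and $j=n-1$ for $\mathfrak{BC}$), so that $j$ and $\overline{j}$ are the two rows flanking $s$. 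First I would insert at the far left of the model the braid gadget that exchanges $\overline{j}$ and $j$ around $s$; because these rows are separated by $s$, this is the three-crossing gadget rather than the single $R$-vertex used in Lemma~\ref{le:fish.invariance}. Since the left boundary arrows on all three rows are forced inward by the definition of the family, the insertion evaluates to a scalar multiple of the original configuration, and a computation using the Symmetry Assumptions together with the normalization $c_2^{(j)}=1$ should identify this scalar as $M=\bigl(a_1^{(j)}a^{(s)}+b^{(s)}b_2^{(j)}\bigr)^2\bigl((a_1^{(j)})^2+(b_2^{(j)})^2\bigr)$. This yields $M\,\mathcal{Z}(\mathfrak{M}^\lambda)=\mathcal{Z}(\text{twisted diagram})$.

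Next I would transport the gadget rightward across the lattice without altering the partition function. On the central triple of rows this is precisely the Caduceus relation (Lemma~\ref{le:caduceus}), applied column by column; on the remaining rows and at each u-turn it is the ordinary Yang--Baxter equation (Proposition~\ref{prop:ybe}) and the bend relation (Lemma~\ref{le:bend.ybe}), whose hypotheses hold for the weights of Table~\ref{tab:summarized.bending.weights} and the bend Symmetry Assumption $U^{(j)}=U^{(\overline{j})}$, $D^{(j)}=D^{(\overline{j})}$. When the gadget reaches the right boundary it meets the central row and its u-turn (and, in type $\mathfrak{C}$, the corner vertex and half-column) in exactly the local pattern of the appropriate jellyfish lemma. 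Summing over the internal states there replaces the gadget-plus-bend by the tabulated jellyfish ratio $\Phi$ times the bare bend, and the surrounding diagram becomes $\widehat{\mathcal{Z}}(\mathfrak{M}^\lambda)$, the partition function with $j$ and $\overline{j}$ interchanged. Thus $M\,\mathcal{Z}(\mathfrak{M}^\lambda)=\Phi\,\widehat{\mathcal{Z}}(\mathfrak{M}^\lambda)$.

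Finally I would cancel the greatest common factor of $M$ and $\Phi$ and read off the claim. For $\mathfrak{B}_*$, Lemma~\ref{le:fish.B*} gives $\Phi=\bigl(a^{(0)}a_2^{(n)}+b_1^{(n)}b^{(0)}\bigr)\bigl(a_1^{(n)}a^{(0)}+b^{(0)}b_2^{(n)}\bigr)\bigl((a_1^{(n)})^2+(b_2^{(n)})^2\bigr)$, so $M$ and $\Phi$ share the factor $\bigl(a_1^{(n)}a^{(0)}+b^{(0)}b_2^{(n)}\bigr)\bigl((a_1^{(n)})^2+(b_2^{(n)})^2\bigr)$, and dividing it out leaves $\bigl(a_1^{(n)}a^{(0)}+b^{(0)}b_2^{(n)}\bigr)\mathcal{Z}(\mathfrak{B}_*^\lambda)=\bigl(a^{(0)}a_2^{(n)}+b_1^{(n)}b^{(0)}\bigr)\widehat{\mathcal{Z}}(\mathfrak{B}_*^\lambda)$; since the two coefficients are interchanged by $n\leftrightarrow\overline{n}$ (using $a_1^{(\overline{n})}=a_2^{(n)}$, $b_2^{(\overline{n})}=b_1^{(n)}$, with $a^{(0)},b^{(0)}$ fixed), the left-hand side is the asserted invariant. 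The $\mathfrak{C}$ and $\mathfrak{BC}$ cases are identical once the ratios of Lemmas~\ref{le:fish.C} and~\ref{le:fish.X} are substituted: there the shared factor is smaller, so the extra factor $\bigl(a_1^{(n)}+i\,b_2^{(n)}\bigr)$ for $\mathfrak{C}$ and $\bigl((a_1^{(n-1)})^2+(b_2^{(n-1)})^2\bigr)$ for $\mathfrak{BC}$ survives and appears in the stated invariants. I expect the main obstacle to be the first step --- verifying that the all-inward three-crossing gadget really collapses to the single scalar $M$ and correctly recording which factors $M$ shares with $\Phi$ --- because, unlike the single-crossing situation of Lemma~\ref{le:fish.invariance}, the central row now contributes its own internal summation, and it is the Caduceus relation that legitimizes transporting the gadget past it.
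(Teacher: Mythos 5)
Your proposal follows the paper's own proof essentially step for step: insert the three-crossing caduceus gadget at the left boundary of the central triple of rows (contributing exactly the multiplier $M=\bigl(a_1^{(j)}a^{(s)}+b^{(s)}b_2^{(j)}\bigr)^2\bigl((a_1^{(j)})^2+(b_2^{(j)})^2\bigr)$, which is forced since all left boundary edges point inward), transport it rightward with the caduceus relation (Lemma~\ref{le:caduceus}), resolve it at the right boundary with the appropriate jellyfish relation (Lemma~\ref{le:fish.B*}, \ref{le:fish.C}, or \ref{le:fish.X}) to obtain $M\,\mathcal{Z}(\mathfrak{M}^\lambda)=\Phi\,\widehat{\mathcal{Z}}(\mathfrak{M}^\lambda)$, and cancel common factors so that the surviving coefficients are interchanged by the index inversion --- exactly the paper's argument, with matching values of $M$, $\Phi$, and the final identities. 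The only inessential difference is your invocation of Proposition~\ref{prop:ybe} and Lemma~\ref{le:bend.ybe} during transport, which is vacuous here because the gadget never leaves the central triple of rows, and the only u-turn it meets is the innermost one already absorbed into the jellyfish relation.
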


\begin{proof}

If $\mathfrak{M} \in \{\mathfrak{B}_*,\mathfrak{C}\}$, then $\left(a_1^{(n)}a^{(0)} + b^{(0)} b_2^{(n)}\right)^2\left(\left(a_1^{(n)}\right)^2+\left(b_2^{(n)}\right)^2\right)
\mathcal{Z}(\mathfrak{M}^\lambda)$ is equal to
$$
\mathcal{Z}\left(
\begin{minipage}{2.65in}
\begin{centering}
  \begin{tikzpicture}[scale=0.25]

\node [label=left:$1$] at (0,19) {};
\node [label=left:${n-1}$] at (0,15) {};
\node [label=left:$\overline{{n}}$] at (-6,13) {};
\node [label=left:$0$] at (-6,11) {};
\node [label=left:${n}$] at (-6,9) {};
\node [label=left:$\overline{{n-1}}$] at (0,7) {};
\node [label=left:$\overline{1}$] at (0,3) {};
\node [rectangle,draw,scale=1,style=densely dashed] at (1,20.7) {};
\node [rectangle,draw,scale=1,style=densely dashed] at (5,20.7) {};
\node [rectangle,draw,scale=1,style=densely dashed] at (7,20.7) {};
\node [rectangle,draw,scale=1,style=densely dashed] at (9,11) {};
\node at (0,17) {$\vdots$};
\node at (0,5) {$\vdots$};
\node at (3,20) {$\cdots$};

\draw [>-] (-6,13) .. controls (-5,13) and (-5,11) .. (-4,11);
\draw [>-] (-4,13) -- (-2,13);
\draw [>-] (-6,11) .. controls (-5,11) and (-5,13) .. (-4,13);
\draw [>-] (-4,11) .. controls (-3,11) and (-3,9) .. (-2,9);
\draw [>-] (-6,9) -- (-4,9);
\draw [>-] (-4,9) .. controls (-3,9) and (-3,11) .. (-2,11);
\draw [>->] (-2,13) .. controls (-1,13) and (-1,11) .. (0,11);
\draw [>->] (-2,11) .. controls (-1,11) and (-1,13) .. (0,13);
\draw [>->] (-2,9) -- (0,9);

\draw [>-] (0,19) -- (9,19);
\draw [>-] (0,15) -- (9,15);
\draw [-] (0,13) -- (9,13);
\draw [-] (0,11) -- (8.25,11);
\draw [-] (0,9) -- (9,9);
\draw [>-] (0,7) -- (9,7);
\draw [>-] (0,3) -- (9,3);

\draw [->] (1,20) -- (1,2);
\draw [->] (5,20) -- (5,2);
\draw [->] (7,20) -- (7,2);
\draw [densely dashed,->] (9,10.25) -- (9,2);

\draw [-]
	(9,19) arc (90:0:8);
\draw [*-]
	(17,11) arc (0:-90:8);

\draw [-]
	(9,15) arc (90:0:4);
\draw [*-]
	(13,11) arc (0:-90:4);

\draw [-]
	(9,13) arc (90:0:2);
\draw [*-]
	(11,11) arc (0:-90:2);

\end{tikzpicture}
\end{centering}
\end{minipage}
\right)
$$
  
The caduceus relation (Lemma \ref{le:caduceus}) allows us to pass this braid through without disturbing the partition function, until eventually we have
$$
\mathcal{Z}\left(
  \begin{aligned}
  \begin{tikzpicture}[scale=0.22]

\node [label=left:$1$] at (0,19) {};
\node [label=left:${n-1}$] at (0,15) {};
\node [label=left:$\overline{{n}}$] at (-6,13) {};
\node [label=left:$0$] at (-6,11) {};
\node [label=left:${n}$] at (-6,9) {};
\node [label=left:$\overline{{n-1}}$] at (0,7) {};
\node [label=left:$\overline{1}$] at (0,3) {};
\node [rectangle,draw,scale=1,style=densely dashed] at (1,20.7) {};
\node [rectangle,draw,scale=1,style=densely dashed] at (5,20.7) {};
\node [rectangle,draw,scale=1,style=densely dashed] at (7,20.7) {};

\node [rectangle,draw,scale=1,style=densely dashed] at (9,11) {};
\node at (0,17) {$\vdots$};
\node at (0,5) {$\vdots$};
\node at (3,20) {$\cdots$};

\draw [>-] (-6,13) .. controls (-5,13) and (-5,11) .. (-4,11);
\draw [>-] (-4,13) -- (-2,13);
\draw [>-] (-6,11) .. controls (-5,11) and (-5,13) .. (-4,13);
\draw [>-] (-4,11) .. controls (-3,11) and (-3,9) .. (-2,9);
\draw [>-] (-6,9) -- (-4,9);
\draw [>-] (-4,9) .. controls (-3,9) and (-3,11) .. (-2,11);
\draw [>->] (-2,13) .. controls (-1,13) and (-1,11) .. (0,11);
\draw [>->] (-2,11) .. controls (-1,11) and (-1,13) .. (0,13);
\draw [>->] (-2,9) -- (0,9);

\draw [>-] (0,19) -- (9,19);
\draw [>-] (0,15) -- (9,15);
\draw [-] (0,13) -- (9,13);
\draw [-] (0,11) -- (8.25,11);
\draw [-] (0,9) -- (9,9);
\draw [>-] (0,7) -- (9,7);
\draw [>-] (0,3) -- (9,3);

\draw [->] (1,20) -- (1,2);
\draw [->] (5,20) -- (5,2);
\draw [->] (7,20) -- (7,2);
\draw [densely dashed,->] (9,10.25) -- (9,2);

\draw [-]
	(9,19) arc (90:0:8);
\draw [*-]
	(17,11) arc (0:-90:8);

\draw [-]
	(9,15) arc (90:0:4);
\draw [*-]
	(13,11) arc (0:-90:4);

\draw [-]
	(9,13) arc (90:0:2);
\draw [*-]
	(11,11) arc (0:-90:2);

\end{tikzpicture} 
\end{aligned}
\right)
= \cdots =
\mathcal{Z}\left(
\begin{aligned}
  \begin{tikzpicture}[scale=0.22]

\node [label=left:$1$] at (0,19) {};
\node [label=left:${n-1}$] at (0,15) {};
\node [label=left:$\overline{{n}}$] at (0,13) {};
\node [label=left:$0$] at (0,11) {};
\node [label=left:${n}$] at (0,9) {};
\node [label=left:$\overline{{n-1}}$] at (0,7) {};
\node [label=left:$\overline{1}$] at (0,3) {};
\node [rectangle,draw,scale=1,style=densely dashed] at (1,20.7) {};
\node [rectangle,draw,scale=1,style=densely dashed] at (5,20.7) {};
\node [rectangle,draw,scale=1,style=densely dashed] at (7,20.7) {};
\node [rectangle,draw,scale=1,style=densely dashed] at (14.75,11) {};
\node at (0,17) {$\vdots$};
\node at (0,5) {$\vdots$};
\node at (3,20) {$\cdots$};

\draw [-] (8,13) .. controls (9,13) and (9,11) .. (10,11);
\draw [-] (10,13) -- (12,13);
\draw [-] (8,11) .. controls (9,11) and (9,13) .. (10,13);
\draw [-] (10,11) .. controls (11,11) and (11,9) .. (12,9);
\draw [-] (8,9) -- (10,9);
\draw [-] (10,9) .. controls (11,9) and (11,11) .. (12,11);
\draw [-] (12,13) .. controls (13,13) and (13,11) .. (14,11);
\draw [-] (12,11) .. controls (13,11) and (13,13) .. (14,13);
\draw [-] (12,9) -- (14.75,9);
\draw [-] (14,13) -- (14.75,13);

\draw [>-] (0,19) -- (14.75,19);
\draw [>-] (0,15) -- (14.75,15);
\draw [>-] (0,13) -- (8,13);
\draw [>-] (0,11) -- (8,11);
\draw [>-] (0,9) -- (8,9);
\draw [>-] (0,7) -- (14.75,7);
\draw [>-] (0,3) -- (14.75,3);

\draw [->] (1,20) -- (1,2);
\draw [->] (5,20) -- (5,2);
\draw [->] (7,20) -- (7,2);
\draw [densely dashed,->] (14.75,10.25) -- (14.75,2);

\draw [-]
	(14.75,19) arc (90:0:8);
\draw [*-]
	(22.75,11) arc (0:-90:8);

\draw [-]
	(14.75,15) arc (90:0:4);
\draw [*-]
	(18.75,11) arc (0:-90:4);

\draw [-]
	(14.75,13) arc (90:0:2);
\draw [*-]
	(16.75,11) arc (0:-90:2);

\end{tikzpicture}
\end{aligned}
\right)
$$
By Lemmas \ref{le:fish.C} and \ref{le:fish.B*}, 
this latter partition function is equal to
\begin{align*}
\left(a_1^{(n)} - i~b_2^{(n)}\right)\left(a_2^{(n)} + i~b_1^{(n)}\right)\left(a_1^{(n)}a^{(0)} + b^{(0)} b_2^{(n)}\right)\left(a^{(0)}a_2^{(n)}+b_1^{(n)}b^{(0)}\right) &\widehat{\mathcal{Z}}(\mathfrak{C}^\lambda)\\
\left(a^{(0)}a_2^{(n)}+b^{(0)}b_1^{(n)}\right)\left(a^{(0)}a_1^{(n)}+b^{(0)}b_2^{(n)}\right)\left(\left(a_1^{(n)}\right)^2+\left(b_2^{(n)}\right)^2\right)\widehat{\mathcal{Z}}(\mathfrak{B}_*^\lambda)\\
\end{align*}
 where for each $\mathfrak{M} \in \{\mathfrak{C},\mathfrak{B}_*\}$ the expression $\widehat{\mathcal{Z}}(\mathfrak{M}^\lambda)$ represents the partition function for the $\mathfrak{M}^\lambda$ family, but with spectral indices $n$ and $\overline{n}$ exchanged. 
 
 Setting these equal to the appropriate $$\left(a_1^{(n)}a^{(0)} + b^{(0)} b_2^{(n)}\right)^2\left(\left(a_1^{(n)}\right)^2+\left(b_2^{(n)}\right)^2\right)\mathcal{Z}(\mathfrak{M}^\lambda)$$ and canceling like factors then gives
 \begin{align*}
\left(a_1^{(n)}a^{(0)} + b^{(0)} b_2^{(n)}\right)\left(a_1^{(n)} + i~b_2^{(n)}\right)\mathcal{Z}(\mathfrak{C}^\lambda)  &= \left(a_2^{(n)} + i~b_1^{(n)}\right)\left(a^{(0)}a_2^{(n)}+b_1^{(n)}b^{(0)}\right) \widehat{\mathcal{Z}}(\mathfrak{C}^\lambda)\\
\left(a_1^{(n)}a^{(0)} + b^{(0)} b_2^{(n)}\right)\mathcal{Z}(\mathfrak{B}_*^\lambda) &= \left(a^{(0)}a_2^{(n)}+b_1^{(n)}b^{(0)}\right) \widehat{\mathcal{Z}}(\mathfrak{B}_*^\lambda).\\
\end{align*} The result follows by observing that each factor on the left hand side transforms to a factor on the right hand side under the appropriate inversion.

The same argument works when $\mathfrak{M}^\lambda = \mathfrak{BC}^\lambda$ if one makes the appropriate spectral index substitutions and uses Lemma \ref{le:fish.X} in place of Lemmas \ref{le:fish.C} and \ref{le:fish.B*}.
\end{proof}

With these symmetry properties of the various partition functions proven, we give a lemma that tells us how to extract certain obvious factors of a symmetric polynomial.  

\begin{lemma}\label{le:finding.factors}
Suppose $p \in \Z[i,a^{(0)},b^{(0)},a_1^{(1)},a_2^{(1)},\cdots,b_1^{(n)},b_2^{(n)}]$ is a polynomial that is invariant under the spectral index inversion $n \leftrightarrow \bar n$, as well as under the spectral index transposition $\ell \leftrightarrow \ell+1$ for each $1 \leq \ell <n$.  Then we have the following implications:
\begin{align*}
\left(a_1^{(1)}a_2^{(2)}+b_1^{(2)}b_2^{(1)}\right) \mid p \Rightarrow & \prod_{j \neq k} \left(a_1^{(j)}a_2^{(k)}+b_1^{(k)}b_2^{(j)}\right) \\ & \qquad \prod_{j<k}\left(a_1^{(j)}a_1^{(k)}+b_2^{(k)}b_2^{(j)}\right)\left(a_2^{(j)}a_2^{(k)}+b_1^{(k)}b_1^{(j)}\right) \mid p \\[10pt]
\left(a_1^{(n)}+ i~b_2^{(n)}\right) \mid p \Rightarrow & \prod_{j=1}^n \left(a_1^{(j)}+ i~b_2^{(j)}\right)\left(a_2^{(j)} + i~b_1^{(j)}\right) \mid p\\[10pt]
\left(\left(a_1^{(n)}\right)^2 + \left(b_2^{(n)}\right)^2\right) \mid p \Rightarrow & \prod_{j=1}^n \left(\left(a_1^{(j)}\right)^2 + \left(b_2^{(j)}\right)^2\right)\left(\left(a_2^{(j)}\right)^2+\left(b_1^{(j)}\right)^2\right) \mid p\\[10pt]
\left(a_1^{(n)} a^{(0)} + b^{(0)}a_2^{(n)}\right) \mid p \Rightarrow & \prod_{j=1}^n \left(a_1^{(j)} a^{(0)}+b^{(0)}b_2^{(j)}\right)\left(a^{(0)}a_2^{(j)}+b_1^{(j)}b^{(0)}\right) \mid p
\end{align*}
\end{lemma}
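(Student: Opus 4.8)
The plan is to recognize the two families of symmetries in the hypothesis as generators of the Weyl group of type $B_n$ and to obtain each product by sweeping out the orbit of the stated factor under this group. First I would pin down the action on the ambient ring $R = \Z[i,a^{(0)},b^{(0)},a_1^{(1)},\ldots,b_2^{(n)}]$. A transposition $\ell \leftrightarrow \ell+1$ interchanges the superscripts $(\ell)$ and $(\ell+1)$ on all four symbols $a_1,a_2,b_1,b_2$, and iterating these realizes the full symmetric group $S_n$ on the spectral indices. By the first Symmetry Assumption the inversion $n \leftrightarrow \overline{n}$ acts by $a_1^{(n)} \leftrightarrow a_2^{(n)}$ and $b_1^{(n)} \leftrightarrow b_2^{(n)}$, while fixing $a^{(0)}, b^{(0)}$ and every symbol whose superscript differs from $n$. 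Conjugating this single sign change by permutations produces a sign change at every index, so the group $W$ generated by all the given symmetries is the hyperoctahedral group $(\Z/2)^n \rtimes S_n$, acting on $R$ by permuting superscripts and, at each flipped index $j$, swapping $a_1^{(j)} \leftrightarrow a_2^{(j)}$ and $b_1^{(j)} \leftrightarrow b_2^{(j)}$.

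The mechanism is then uniform. The ring $R$ is a polynomial ring over the UFD $\Z[i]$, hence itself a UFD. If $p$ is $W$-invariant and a factor $f$ divides $p$, then for each $w \in W$ we have $w(f) \mid w(p) = p$, so every element of the $W$-orbit of $f$ divides $p$; and in a UFD pairwise coprime divisors of $p$ have product dividing $p$. Thus it suffices to compute the orbit of the given factor and to verify that its distinct members are pairwise coprime.

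I would then run the four orbit calculations. Starting from $f = a_1^{(1)}a_2^{(2)}+b_1^{(2)}b_2^{(1)}$, the $S_n$-action gives $a_1^{(j)}a_2^{(k)}+b_1^{(k)}b_2^{(j)}$ for every ordered pair $j \neq k$; flipping only the first index yields the type $a_2^{(j)}a_2^{(k)}+b_1^{(k)}b_1^{(j)}$, flipping only the second yields $a_1^{(j)}a_1^{(k)}+b_2^{(k)}b_2^{(j)}$, and flipping both returns a member already in the ordered-pair family, so the orbit reproduces exactly the first product. The remaining three orbits are one-index affairs: the orbit of $a_1^{(n)}+i\,b_2^{(n)}$ is $\{a_1^{(j)}+i\,b_2^{(j)},\, a_2^{(j)}+i\,b_1^{(j)}\}_{j}$, the orbit of $\bigl(a_1^{(n)}\bigr)^2+\bigl(b_2^{(n)}\bigr)^2$ is $\{(a_1^{(j)})^2+(b_2^{(j)})^2,\, (a_2^{(j)})^2+(b_1^{(j)})^2\}_{j}$, and the orbit of $a_1^{(n)}a^{(0)}+b^{(0)}b_2^{(n)}$ is $\{a_1^{(j)}a^{(0)}+b^{(0)}b_2^{(j)},\, a^{(0)}a_2^{(j)}+b_1^{(j)}b^{(0)}\}_{j}$, matching the three remaining products.

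The genuine content, as opposed to the bookkeeping above, is the coprimality check that upgrades ``each orbit element divides $p$'' to ``the product divides $p$''. Each displayed binomial is a sum of two monomials in disjoint variables and so is irreducible, and two distinct orbit elements either use disjoint index sets or, at a shared index, disjoint variable pairs, so no two are associate. The one wrinkle is the third implication, where $(a_1^{(j)})^2+(b_2^{(j)})^2=(a_1^{(j)}+i\,b_2^{(j)})(a_1^{(j)}-i\,b_2^{(j)})$ is reducible over $\Z[i]$; here I would note that these conjugate linear factors are non-associate and remain coprime to the linear factors of $(a_2^{(j)})^2+(b_1^{(j)})^2$ and to those coming from other indices, so the full product of quadratics is still a product of distinct irreducibles and hence divides $p$.
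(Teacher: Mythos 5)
Your proposal is correct and takes essentially the same route as the paper: identify the group generated by the transpositions and the inversion (which, by the symmetry assumptions, swaps $a_1^{(n)} \leftrightarrow a_2^{(n)}$ and $b_1^{(n)} \leftrightarrow b_2^{(n)}$) as the hyperoctahedral group, sweep each stated factor through its orbit, and conclude that the product of the orbit divides $p$. The only difference is that you spell out the UFD and pairwise-coprimality step needed to pass from ``each orbit element divides $p$'' to ``their product divides $p$'' --- including the wrinkle that $\left(a_1^{(j)}\right)^2+\left(b_2^{(j)}\right)^2$ splits over $\Z[i]$ --- whereas the paper leaves this entirely implicit.
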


\begin{proof}
Certainly if $p$ is invariant under the stated exchanges, then it is also invariant under $j \leftrightarrow k$ as well as $j \leftrightarrow \overline{k}$ for any $j,k$.  

If $j$ and $k$ are given with $j \neq k$, then the exchange $1 \leftrightarrow j, 2 \leftrightarrow k$ sends the factor $\left(a_1^{(1)}a_2^{(2)}+b_1^{(2)}b_2^{(1)}\right)$ to $\left(a_1^{(j)}a_2^{(k)}+b_1^{(k)}b_2^{(j)}\right)$.  The exchange $1 \leftrightarrow j, 2 \leftrightarrow \overline{k}$ sends this factor to $\left(a_1^{(j)}a_1^{(k)}+b_2^{(k)}b_2^{(j)}\right)$, and the exchange $1 \leftrightarrow \overline{j}, 2 \leftrightarrow k$ sends this factor to $ \left(a_2^{(j)}a_2^{(k)} + b_1^{(k)}b_1^{(j)}\right)$.

For a given $j$, the exchange $n \leftrightarrow j$ sends the factor $\left(a_1^{(n)}+ i~b_2^{(n)}\right)$ to $\left(a_1^{(j)}+ i~b_2^{(j)}\right)$, and hence this term must also divide $p$.  Likewise the exchange $n \leftrightarrow \overline{j}$ sends the factor $\left(a_1^{(n)}+ i~b_2^{(n)}\right)$ to $\left(a_2^{(j)}+ i~b_1^{(j)}\right),$ so this must appear in the factorization of $p$ as well.

The exchanges $n \leftrightarrow j$ and $n \leftrightarrow \overline{j}$ take $\left(\left(a_1^{(n)}\right)^2+\left(b_2^{(n)}\right)^2\right)$ to $\left(\left(a_1^{(j)}\right)^2+\left(b_2^{(j)}\right)^2\right)$ and $\left(\left(a_2^{(j)}\right)^2+\left(b_1^{(j)}\right)^2\right)$, respectively.

Finally, the exchange $n \leftrightarrow j$ and $n \leftrightarrow \overline{j}$ send $\left(a_1^{(n)}a^{(0)}+b^{(0)}a_2^{(n)}\right)$ to $\left(a_1^{(j)}a^{(0)}+b^{(0)}a_2^{(j)}\right)$ and $\left(a^{(0)}a_2^{(j)}+b_1^{(j)}b^{(0)}\right)$.
\end{proof}

\begin{theorem}\label{prop:divisibility.for.partition.functions}
When using the bending weights from Table \ref{tab:summarized.bending.weights}, we have the following divisibilities: 
\begin{align*}
&\prod_{j\leq n} \left(a_2^{(j)} + i~b_1^{(j)}\right)\prod_{j<k\leq n} \left(a_1^{(k)}a_2^{(j)}+b_1^{(j)}b_2^{(k)}\right)\left(a_2^{(j)}a_2^{(k)}+b_1^{(k)}b_1^{(j)}\right) \mid \mathcal{Z}(\mathfrak{B}^\lambda)\\[10pt]
&\prod_{j \leq n} \left(a^{(0)}a_2^{(j)}+b_1^{(j)}b^{(0)}\right) \prod_{j<k\leq n} \left(a_1^{(k)}a_2^{(j)}+b_1^{(j)}b_2^{(k)}\right)\left(a_2^{(j)}a_2^{(k)}+b_1^{(k)}b_1^{(j)}\right) \mid \mathcal{Z}(\mathfrak{B}_*^\lambda)\\[10pt]
&\prod_{j \leq n}\left(a_2^{(j)} + i~b_1^{(j)}\right)\left(a^{(0)}a_2^{(j)}+b_1^{(j)}b^{(0)}\right)\prod_{j<k\leq n}\left(a_1^{(k)}a_2^{(j)}+b_1^{(j)}b_2^{(k)}\right)\left(a_2^{(j)}a_2^{(k)}+b_1^{(k)}b_1^{(j)}\right) \mid \mathcal{Z}(\mathfrak{C}^\lambda)\\[10pt]
&\prod_{j \leq n} \left(\left(a_2^{(j)}\right)^2 + \left(b_1^{(j)}\right)^2\right) \prod_{j<k\leq n} \left(a_1^{(k)}a_2^{(j)}+b_1^{(j)}b_2^{(k)}\right) \left(a_2^{(j)}a_2^{(k)} + b_1^{(k)}b_1^{(j)}\right) \mid \mathcal{Z}(\mathfrak{C}_*^\lambda)\\[10pt]
&\prod_{j<k\leq n} \left(a_1^{(k)}a_2^{(j)}+b_1^{(j)}b_2^{(k)}\right)\left(a_2^{(j)}a_2^{(k)}+b_1^{(k)}b_1^{(j)}\right) \mid \mathcal{Z}(\mathfrak{D}^\lambda) \qquad (\text{if } 1 \in \lambda) \\[10pt]
&\prod_{j\leq n} \left(\left(a_2^{(j)}\right)^2 + \left(b_1^{(j)}\right)^2\right)\prod_{j<k\leq n} \left(a_1^{(k)}a_2^{(j)}+b_1^{(j)}b_2^{(k)}\right)\left(a_2^{(j)}a_2^{(k)}+b_1^{(k)}b_1^{(j)}\right) \mid \mathcal{Z}(\mathfrak{D}^\lambda) \qquad (\text{if } 1 \not\in \lambda) \\[10pt]
&\prod_{j<n} \left(a^{(n)}a_2^{(j)}+b_1^{(j)}b^{(n)}\right) \mkern-5mu \left( \mkern-4mu \left(a_2^{(j)}\right)^{\mkern-2mu 2} \mkern-5mu +\left(b_1^{(j)}\right)^{\mkern-2mu 2} \right) \mkern-7mu \prod_{j<k<n} \mkern-6mu \left(a_1^{(k)}a_2^{(j)}+b_1^{(j)}b_2^{(k)}\right) \mkern-5mu \left(a_2^{(j)}a_2^{(k)}+b_1^{(k)}b_1^{(j)}\right) \mid \mathcal{Z}(\mathfrak{BC}^\lambda)\\[10pt]
\end{align*}
In each case, the quotient is a polynomial that is symmetric under the exchange of spectral indices $j$ and $k$, and is also symmetric under the replacement of $j$ by $\overline{j}$.  
\end{theorem}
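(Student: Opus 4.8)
The plan is to promote the three preceding invariance lemmas into explicit functional equations for $\mathcal{Z}(\mathfrak{M}^\lambda)$ under the Weyl group generated by the adjacent transpositions $\sigma_j\colon j\leftrightarrow j+1$ and the inversion $\iota\colon n\leftrightarrow\overline{n}$, and then to invoke Lemma~\ref{le:finding.factors}. First I would rewrite Lemma~\ref{le:transposition.invariance} as
\[
\left(a_1^{(j+1)}a_2^{(j)}+b_1^{(j)}b_2^{(j+1)}\right)\sigma_j\!\left[\mathcal{Z}(\mathfrak{M}^\lambda)\right]=\left(a_1^{(j)}a_2^{(j+1)}+b_1^{(j+1)}b_2^{(j)}\right)\mathcal{Z}(\mathfrak{M}^\lambda),
\]
and record from Lemma~\ref{le:fish.invariance} or Lemma~\ref{le:fish.invariance.C} the analogous equation giving $\iota[\mathcal{Z}(\mathfrak{M}^\lambda)]$ as an explicit ratio times $\mathcal{Z}(\mathfrak{M}^\lambda)$: this ratio is trivial for $\mathfrak{D}$ with $1\in\lambda$, a quotient of linear factors $(a_1^{(n)}+i\,b_2^{(n)})/(a_2^{(n)}+i\,b_1^{(n)})$ for $\mathfrak{B}$ and $\mathfrak{C}$, a quotient of quadratics $((a_1^{(n)})^2+(b_2^{(n)})^2)/((a_2^{(n)})^2+(b_1^{(n)})^2)$ for $\mathfrak{C}_*$, $\mathfrak{D}$ with $1\notin\lambda$, and $\mathfrak{BC}$, and carries an extra factor $(a_1^{(n)}a^{(0)}+b^{(0)}b_2^{(n)})/(a^{(0)}a_2^{(n)}+b_1^{(n)}b^{(0)})$ for $\mathfrak{B}_*$ and $\mathfrak{C}$. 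These say that $\mathcal{Z}$ transforms like the reciprocal of one ``half'' of a type-$D$ Weyl denominator.

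Next I would introduce the complementary half as a polynomial multiplier $G$: for every pair $j<k$ the factors $(a_1^{(j)}a_2^{(k)}+b_1^{(k)}b_2^{(j)})$ and $(a_1^{(j)}a_1^{(k)}+b_2^{(j)}b_2^{(k)})$, together with the model-specific inversion factors $\prod_j(a_1^{(j)}+i\,b_2^{(j)})$, $\prod_j((a_1^{(j)})^2+(b_2^{(j)})^2)$, or $\prod_j(a_1^{(j)}a^{(0)}+b^{(0)}b_2^{(j)})$ dictated by the inversion ratio above. The defining property of $G$ is that it transforms under each $\sigma_j$ by exactly the reciprocal of the transposition ratio and under $\iota$ by the reciprocal of the inversion ratio, all other factors being merely permuted; hence $p:=G\cdot\mathcal{Z}(\mathfrak{M}^\lambda)$ is a polynomial invariant under every $\sigma_j$ and under $\iota$. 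Here one uses that $a^{(0)},b^{(0)}$ (and $a^{(n)},b^{(n)}$ in type $\mathfrak{BC}$) are fixed by the Weyl action, and that in $\mathfrak{BC}$ the central index $n$ takes the role of $0$, so the group acts on $\{1,\dots,n-1\}$.

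With $p$ invariant and containing $(a_1^{(1)}a_2^{(2)}+b_1^{(2)}b_2^{(1)})$ and each required inversion factor as factors of $G$, Lemma~\ref{le:finding.factors} applies directly and shows that the entire Weyl orbit $\Pi$ of these factors divides $p$. Since $G\mid\Pi$ by construction, I would write $\Pi=G\cdot(\Pi/G)$ and cancel $G$ in the integral domain $\Z[i,a^{(0)},b^{(0)},a_1^{(1)},\dots,b_2^{(n)}]$ to conclude $(\Pi/G)\mid\mathcal{Z}(\mathfrak{M}^\lambda)$; a short check identifies $\Pi/G$ with exactly the product of factors claimed for $\mathfrak{M}$. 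For the symmetry of the quotient $Q=\mathcal{Z}(\mathfrak{M}^\lambda)/(\Pi/G)$, I observe that $p=G\,\mathcal{Z}=\Pi\,Q$, so $Q=p/\Pi$ is a ratio of two $W$-invariant polynomials and is therefore itself invariant under $j\leftrightarrow k$ and under $j\mapsto\overline{j}$, which is the asserted symmetry.

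The verifications underlying this are that the displayed expressions are pairwise coprime irreducibles — so that the cancellation and the propagation from a single divisor to a full product in Lemma~\ref{le:finding.factors} are legitimate — and that $p$ really is $W$-invariant. I expect the main obstacle to be the latter, carried out model by model: one must check that the complementary multiplier $G$ precisely cancels the inversion ratio supplied by Lemmas~\ref{le:fish.invariance} and \ref{le:fish.invariance.C}. This is most delicate for $\mathfrak{C}$ and $\mathfrak{BC}$, where two independent families of inversion factors (a linear or quadratic one and an $a^{(0)}$- or $a^{(n)}$-type one) must be reconciled at once, and for the quadratic inversion factors, where one must confirm that $((a_1^{(n)})^2+(b_2^{(n)})^2)$ splits into the expected conjugate pair over $\Z[i]$ and that coprimality survives the split.
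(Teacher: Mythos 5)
Your proposal follows essentially the same route as the paper's own proof: the paper likewise multiplies $\mathcal{Z}(\mathfrak{M}^\lambda)$ by the complementary product (in type $\mathfrak{B}$, by $\prod_j\bigl(a_1^{(j)}+i\,b_2^{(j)}\bigr)\prod_{j<k}\bigl(a_1^{(j)}a_2^{(k)}+b_1^{(k)}b_2^{(j)}\bigr)\bigl(a_1^{(j)}a_1^{(k)}+b_2^{(k)}b_2^{(j)}\bigr)$), uses Lemma~\ref{le:transposition.invariance} together with Lemma~\ref{le:fish.invariance} or Lemma~\ref{le:fish.invariance.C} to show this product is invariant under adjacent transpositions and the inversion $n\leftrightarrow\overline{n}$ (the extra factors being merely permuted), invokes Lemma~\ref{le:finding.factors}, cancels, and establishes the symmetry of the quotient by writing it as a ratio of two invariant polynomials. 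The only difference is that you flag explicitly the coprimality and cancellation bookkeeping that the paper leaves implicit.
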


\begin{proof}
We will give a complete proof for the result in the $\mathfrak{B}^\lambda$ family; the proofs for the other families are essentially the same, with only minor variations on this same theme. 

Consider the expression
\begin{align}
&\prod_j\left(a_1^{(j)}+i~b_2^{(j)}\right) \prod_{j<k}\left(a_1^{(j)}a_2^{(k)}+b_1^{(k)}b_2^{(j)}\right)\left(a_1^{(j)}a_1^{(k)}+b_2^{(k)}b_2^{(j)}\right) \mathcal{Z}(\mathfrak{B}^\lambda).
%
%
\comment{
&\prod_j \left(a_1^{(j)}a^{(0)}+b^{(0)}b_2^{(j)}\right) \prod_{j<k}\left(a_1^{(j)}a_2^{(k)}+b_1^{(k)}b_2^{(j)}\right)\left(a_1^{(j)}a_1^{(k)}+b_2^{(k)}b_2^{(j)}\right) \mathcal{Z}_{\mathfrak{BC}}^\lambda\\[10pt]
&\prod_j\left(a_1^{(j)}+ i~b_2^{(j)}\right)\left(a_1^{(j)}a^{(0)}+b^{(0)}b_2^{(j)}\right) \prod_{j<k}\left(a_1^{(j)}a_2^{(k)}+b_1^{(k)}b_2^{(j)}\right)\left(a_1^{(j)}a_1^{(k)}+b_2^{(k)}b_2^{(j)}\right) \mathcal{Z}_{\mathfrak{C}}^\lambda\\[10pt]
&\prod_j \left(\left(a_1^{(j)}\right)^2+\left(b_2^{(j)}\right)^2\right) \prod_{j<k}\left(a_1^{(j)}a_2^{(k)}+b_1^{(k)}b_2^{(j)}\right)\left(a_1^{(j)}a_1^{(k)}+b_2^{(k)}b_2^{(j)}\right)\mathcal{Z}_{\mathfrak{C}^*}^\lambda\\[10pt]
&\prod_{j<k}\left(a_1^{(j)}a_2^{(k)}+b_1^{(k)}b_2^{(j)}\right)\left(a_1^{(j)}a_1^{(k)}+b_2^{(k)}b_2^{(j)}\right)\mathcal{Z}_{\mathfrak{D}}^\lambda&\text{(if $0 \in\lambda$)}\\[10pt]
&\prod_j \left(\left(a_1^{(j)}\right)^2+\left(b_2^{(j)}\right)^2\right) \prod_{j<k}\left(a_1^{(j)}a_2^{(k)}+b_1^{(k)}b_2^{(j)}\right)\left(a_1^{(j)}a_1^{(k)}+b_2^{(k)}b_2^{(j)}\right)\mathcal{Z}_{\mathfrak{D}}^\lambda&\text{(if $0 \not\in \lambda$)}\\[10pt]
&\prod_j \left(a_1^{(j)}a^{(0)}+b^{(0)}b_2^{(j)}\right)\left(\left(a_1^{(n)}\right)^2+\left(b_2^{(n)}\right)^2\right) \prod_{j<k}\left(a_1^{(j)}a_2^{(k)}+b_1^{(k)}b_2^{(j)}\right)\left(a_1^{(j)}a_1^{(k)}+b_2^{(k)}b_2^{(j)}\right) \mathcal{Z}_{\mathfrak{B}^*}^\lambda\\[10pt]
&\prod_j \left(a_1^{(j)}a^{(0)}+b^{(0)}b_2^{(j)}\right) \prod_{j<k}\left(a_1^{(j)}a_2^{(k)}+b_1^{(k)}b_2^{(j)}\right)\left(a_1^{(j)}a_1^{(k)}+b_2^{(k)}b_2^{(j)}\right) \mathcal{Z}_{\mathfrak{B}^{**}}^\lambda\\[10pt]
}
%
%
\end{align}\label{eq:symmetric.function.type.B}  
By Lemma \ref{le:finding.factors}, if we can show this polynomial is invariant under $n \leftrightarrow \overline{n}$ as well as $\ell \leftrightarrow {\ell+1}$ for each $\ell$, then the desired result will follow. 

Lemma \ref{le:transposition.invariance} tells us that the term $\left(a_1^{(\ell)}a_2^{(\ell+1)}+b_1^{(\ell+1)}b_2^{(\ell)}\right)\mathcal{Z}(\mathfrak{B}^\lambda)$ is invariant under $\ell \leftrightarrow \ell+1$.
The remaining factors of (\ref{eq:symmetric.function.type.B}) are permuted under the exchange $\ell \leftrightarrow \ell+1$, so that the above polynomials is invariant under any adjacent transposition.   

Lemma \ref{le:fish.invariance} tells us that $\left(a_1^{(n)}+i~b_2^{(n)}\right)\mathcal{Z}(\mathfrak{B}^\lambda)$ is invariant under $n \leftrightarrow \overline{n}$.  We note that the remaining factors of (\ref{eq:symmetric.function.type.B}) are permuted under $n \leftrightarrow \overline{n}$: terms that don't involve $n$ are clearly fixed, whereas the terms
$\left(a_1^{(j)}a_2^{(n)}+b_1^{(n)}b_2^{(j)}\right)\left(a_1^{(j)}a_1^{(n)}+b_2^{(n)}b_2^{(j)}\right)$ have their factors permuted by this exchange.  Hence our polynomial is fixed by $n \leftrightarrow \overline{n}$, as desired.


To see that the quotient has the appropriate symmetry, notice that it is equal to
\[ \scalebox{.9}{$\frac{\displaystyle \prod_j\left(a_1^{(j)}+i~b_2^{(j)}\right) \prod_{j<k}\left(a_1^{(j)}a_2^{(k)}+b_1^{(k)}b_2^{(j)}\right)\left(a_1^{(j)}a_1^{(k)}+b_2^{(k)}b_2^{(j)}\right) \mathcal{Z}(\mathfrak{B}^\lambda)}{\displaystyle \prod_{j} \left(a_2^{(j)} + i~b_1^{(j)}\right)\left(a_1^{(j)}+i~b_2^{(j)}\right)\prod_{j\neq k} \left(a_1^{(k)}a_2^{(j)}+b_1^{(j)}b_2^{(k)}\right)\prod_{j<k}\left(a_2^{(j)}a_2^{(k)}+b_1^{(k)}b_1^{(j)}\right)\left(a_1^{(j)}a_1^{(j)}+b_2^{(k)}b_2^{(j)}\right)}$}. \]
Both numerator and denominator are invariant under the appropriate spectral index permutations, and hence so too is the quotient.  
\end{proof}

\begin{proposition}\label{prop:partition.functions.for.rho}
Let $\mathfrak{M} \in \{\mathfrak{B},\mathfrak{B}_*,\mathfrak{C},\mathfrak{C}_*,\mathfrak{D},\mathfrak{BC}\}$ be given.  When using the bending weights from Table \ref{tab:summarized.bending.weights}, the divisibility statements for $\mathcal{Z}(\mathfrak{M}^\rho)$ from Theorem \ref{prop:divisibility.for.partition.functions} are equalities.
\end{proposition}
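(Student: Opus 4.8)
The plan is to upgrade the divisibility $P \mid \mathcal{Z}(\mathfrak{M}^\rho)$ from Theorem~\ref{prop:divisibility.for.partition.functions} to an equality by pinning down the quotient $Q := \mathcal{Z}(\mathfrak{M}^\rho)/P$ via \emph{two competing gradings} on the Boltzmann weights. Here $P$ denotes the product of factors appearing on the left of the relevant divisibility, so that $\mathcal{Z}(\mathfrak{M}^\rho) = Q\cdot P$ with $Q$ symmetric under the spectral-index group by the final assertion of that theorem. I will show $Q$ is forced to be a constant multiple of $\prod_{j} c_2^{(j)}$; since the standing convention is $c_2^{(j)}=1$, this makes $Q$ constant and the divisibility an equality. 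Throughout I work \emph{before} imposing $c_2^{(j)}=1$, treating all six weights per spectral index as independent variables subject only to the free-fermion relation.

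First I would record the multidegree of $Q$ under the \emph{standard grading} in which every $a_i^{(j)},b_i^{(j)},c_i^{(j)}$ has degree one. Each interior tetravalent vertex contributes exactly one weight of its own spectral index, while the bend vertices and the $\mathfrak{C}$-corner vertex contribute only the constants fixed in Table~\ref{tab:summarized.bending.weights}; using the Symmetry Assumption to identify the index-$\overline{j}$ weights with the index-$j$ weights, $\mathcal{Z}(\mathfrak{M}^\rho)$ is multihomogeneous of degree equal to the number of interior vertices in rows $j,\overline{j}$. For $\mathfrak{B}^\rho$ this degree is $2\lambda_1 = 2n$ in the index-$j$ weights, whereas $P$ has degree $1 + 2(n-1) = 2n-1$ there; hence $Q$ has degree exactly $1$ in each spectral index group. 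This ``multidegree $(1,\dots,1)$'' is precisely the feature special to $\lambda=\rho$: for larger $\lambda_1$ the gap grows, which is why the quotient is a nontrivial deformed character in general.

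The second, decisive grading assigns $\deg c_2^{(j)} = 1$, $\deg c_1^{(j)} = -1$, and degree $0$ to every $a_i^{(j)},b_i^{(j)}$. The key combinatorial input is an arrow-conservation count read down each column: an $EW$-vertex ($c_2$) is the unique type whose two vertical edges both point away from it (a local source), an $NS$-vertex ($c_1$) the unique type whose two vertical edges both point into it (a local sink), and the remaining four types let the vertical arrow pass straight through. Because $\rho=[n,\dots,1]$ has every column index among its parts, all top and all bottom boundary edges of $\mathfrak{M}^\rho$ point outward, so along each column the sources and sinks alternate starting and ending with a source, giving $\#\{c_2\}-\#\{c_1\}=1$. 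Summing over the $n$ columns shows $\mathcal{Z}(\mathfrak{M}^\rho)$ is homogeneous of degree $n$ in this grading, while every factor of $P$ (built only from $a$'s, $b$'s, and $a^{(0)},b^{(0)}$) has degree $0$; hence $Q$ is homogeneous of degree $n$ here too. Combining the gradings, every monomial of $Q$ is a product $\prod_j x_j$ with $x_j$ a single index-$j$ weight (first grading) whose total second-grading degree is $n$ (second grading); since each factor contributes at most $1$, with equality only for $c_2^{(j)}$, every monomial equals $\prod_j c_2^{(j)}$. Thus $Q = c\prod_j c_2^{(j)}$, and setting $c_2^{(j)}=1$ yields $\mathcal{Z}(\mathfrak{M}^\rho) = c\,P$. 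The constant $c$ is then fixed by one coefficient (the $n=1$ computation gives $c=1$ in type $\mathfrak{B}$).

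The main obstacle is the per-model bookkeeping of these two degree counts for the families carrying a central row, a half column, or the corner vertex, namely $\mathfrak{B}_*,\mathfrak{C},\mathfrak{C}_*,\mathfrak{D},\mathfrak{BC}$. For each I must recompute the index-$j$ standard degree of $\mathcal{Z}(\mathfrak{M}^\rho)$ and of $P$ to confirm the gap is again $1$, and must check that the central row (where $c_1^{(0)}=(a^{(0)})^2+(b^{(0)})^2$ with $c_2^{(0)}=1$), the half column, and the $R,L$ corner weights neither break the column arrow-conservation driving the second grading nor introduce unaccounted index-$0$ variables into $Q$. Each such feature contributes either a constant or a controlled factor already present in $P$, so the same two-grading squeeze should apply verbatim; but carrying out the degree arithmetic case by case, and confirming the value of $c$ in each type, is where the genuine labor of the proof resides.
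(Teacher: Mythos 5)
Your two-grading argument for showing the quotient is constant is sound, and it is essentially a per-index refinement of what the paper itself does: the paper works in the specialized ring (with $c_2^{(j)}=1$, $c_1^{(j)}=a_1^{(j)}a_2^{(j)}+b_1^{(j)}b_2^{(j)}$), observes that every admissible state of $\mathfrak{B}^\rho$ has exactly $n$ more $c_2$ than $c_1$ vertices (via row sums of the associated half-turn symmetric ASM --- your column arrow-conservation count is an equivalent route to the same fact), deduces that every summand of $\mathcal{Z}(\mathfrak{B}^\rho)$ is homogeneous of total degree $2n^2-n$, notes the known factor from Theorem~\ref{prop:divisibility.for.partition.functions} has exactly this degree, and concludes the quotient is a constant. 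Up to that point you and the paper agree (one slip: the corner weight $L$ in the $\mathfrak{C}$ model is $a^{(0)}-i\,b^{(0)}$, not a constant from Table~\ref{tab:summarized.bending.weights}, so it does inject index-$0$ variables --- this belongs to the per-model bookkeeping you defer).

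The genuine gap is the determination of the constant. You assert that ``the constant $c$ is then fixed by one coefficient (the $n=1$ computation gives $c=1$ in type $\mathfrak{B}$),'' but $c$ is defined separately for each rank $n$ and each model; a computation at $n=1$ says nothing about $n\geq 2$ unless you supply an argument (say, a specialization relating the rank-$n$ model to the rank-$(n-1)$ model) showing $c$ is independent of $n$, which you do not. Moreover, ``fixed by one coefficient'' conceals the actual work: one must exhibit a monomial whose coefficient in the product $P$ is known \emph{and} whose coefficient in $\mathcal{Z}(\mathfrak{M}^\rho)$ can be computed by enumerating the contributing ice states. This is exactly where the paper's proof has its real combinatorial content: it takes the monomial $\prod_j \bigl(a_2^{(j)}\bigr)^{2n-j}\bigl(a_1^{(j)}\bigr)^{j-1}$, checks its coefficient in $P$ is $1$, and then proves --- by an inductive row-by-row analysis forcing the position of the unique $c_2$ vertex in each row pair --- that exactly one admissible state of $\mathfrak{B}^\rho$ carries this weight. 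Without this uniqueness argument (or a substitute), your proof establishes only that $\mathcal{Z}(\mathfrak{M}^\rho)$ is \emph{some} constant multiple of the product, whereas Proposition~\ref{prop:partition.functions.for.rho} requires the constant to be $1$, i.e.\ that the divisibility is an honest equality.
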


\begin{proof}
We will prove this result for the $\mathfrak{B}^\rho$ model and leave the other proofs to the interested reader.  First, we will prove that each summand in the partition function $\mathcal{Z}(\mathfrak{B}^\rho)$ is a degree $2n^2-n$ polynomial in the variables $a_1^{(j)},a_2^{(j)},b_1^{(j)}$ and $b_2^{(j)}$. To see this, note that for any $A \in \mathfrak{B}^\lambda$ has $n$ more $c_2$ entries than $c_1$ entries (from the perspective of the corresponding alternating sign matrix, the sum of entries in each row must be $+1$, and $c_2$ vertices in our bent ice correspond to $+1$ entries of the ASM, while $c_1$ vertices corresponding to $-1$ entries).  Since any decoration from $\{a_1,a_2,b_1,b_2\}$ has degree $1$ and any vertex decorated by $c_1$ contributes a factor of degree $2$, we have 
$$\deg \left(\text{wt}(A)\right) = \left(\#\mbox{vertices}-(\# c_2 \mbox{ entries}) - (\#c_1 \mbox{ entries})\right) + 2(\# c_1 \mbox{ entries}) = \#\mbox{vertices} - n.$$
Now one simply counts that the $\mathfrak{B}^\rho$ model has $2n^2$ vertices.  


Since the known factor from Theorem \ref{prop:divisibility.for.partition.functions} has the correct degree, it must be that the partition function is a constant multiple of this factor.  To recover this constant, expand the product
$$
\prod_{j\leq n} \left(a_2^{(j)} + i~b_1^{(j)}\right)\prod_{j<k\leq n} \left(a_1^{(k)}a_2^{(j)}+b_1^{(j)}b_2^{(k)}\right)\left(a_2^{(j)}a_2^{(k)}+b_1^{(k)}b_1^{(j)}\right)
$$ and note that the coefficient of $\prod_j \left(a_2^{(j)}\right)^{2n-j} \left(a_1^{(j)}\right)^{j-1}$ is $1$.  We argue that there is only one element of $\mathfrak{B}^\rho$ which has this weight, from which it will follow that the two polynomials are equal.

Consider the configuration whose only $c_2$ decorations are at column $j$ of row $j$ for each $1\leq j \leq n$, and which has no $c_1$ decorations.  (The remaining decorations can be filled in inductively).  This configuration clearly has weight $\prod_j \left(a_2^{(j)}\right)^{2n-j} \left(a_1^{(j)}\right)^{j-1}$.  Now suppose some other configuration has this same weight.  We claim this is enough to determine the vertex decorations (and their placements) in the first row pair.  For this, note that in order for there to be no $a_1^{(1)}$ contribution, there cannot be any $c_1^{(1)}$ or $c_1^{(\bar 1)}$ weights, and so there is only a single $c_2^{(1)}$ or $c_2^{(\bar 1)}$ configuration.  If it were the case that there were a $c_2^{(\bar 1)}$ vertex, then the orientation along the horizontal of row $1$ wouldn't change, and hence we would have a contribution of $\left(b_1^{(1)}\right)^n$ to the weight, a contradiction.  Hence there is a single $c_2^{(1)}$ vertex; if it is anywhere but column $1$, then this would force the top left entry to have weight $b_1^{(1)}$, again a contradiction.  Hence the $c_2^{(1)}$ entry must be in the first column, with remaining vertices in row $1$ decorated $a_2^{(1)}$ and vertices in $\overline{1}$ decorated $a_1^{(\overline{1})} = a_2^{(1)}$.  A similar argument for subsequent rows gives the desired result.
\comment{
\begin{figure}[!ht]
\centering
\begin{tikzpicture}[scale=.75]
\node [label=left:$1$] at (0,2) {};
\node [label=left:$2$] at (0,1) {};
\node [label=left:$\overline{2}$] at (0,0) {};
\node [label=left:$\overline{1}$] at (0,-1) {};

\node [label=above:$4$] at (0.5,2.5) {};
\node [label=above:$3$] at (1.5,2.5) {};
\node [label=above:$2$] at (2.5,2.5) {};
\node [label=above:$1$] at (3.5,2.5) {};

\draw [>-] (0,2) -- (1,2);
\draw [>-] (0,1) -- (1,1);
\draw [>-] (0,0) -- (1,0);
\draw [>-] (0,-1) -- (1,-1);

\draw [<-] (0.5,2.5) -- (0.5,1.5);
\draw [<-] (0.5,1.5) -- (0.5,.5);
\draw [<-] (0.5,0.5) -- (0.5,-0.5);
\draw [<->] (0.5,-0.5) -- (0.5,-1.5);

\draw [>-] (1,2) -- (2,2);
\draw [>-] (1,1) -- (2,1);
\draw [>-] (1,0) -- (2,0);
\draw [<-] (1,-1) -- (2,-1);

\draw [>-] (1.5,2.5) -- (1.5,1.5);
\draw [>-] (1.5,1.5) -- (1.5,.5);
\draw [>-] (1.5,0.5) -- (1.5,-0.5);
\draw [>->] (1.5,-0.5) -- (1.5,-1.5);

\draw [>-] (2,2) -- (3,2);
\draw [>-] (2,1) -- (3,1);
\draw [>-] (2,0) -- (3,0);
\draw [<-] (2,-1) -- (3,-1);

\draw [<-] (2.5,2.5) -- (2.5,1.5);
\draw [<-] (2.5,1.5) -- (2.5,.5);
\draw [>-] (2.5,0.5) -- (2.5,-0.5);
\draw [>->] (2.5,-0.5) -- (2.5,-1.5);

\draw [>->] (3,2) -- (4,2);
\draw [<-<] (3,1) -- (4,1);
\draw [>->] (3,0) -- (4,0);
\draw [<-<] (3,-1) -- (4,-1);

\draw [>-] (3.5,2.5) -- (3.5,1.5);
\draw [>-] (3.5,1.5) -- (3.5,.5);
\draw [>-] (3.5,0.5) -- (3.5,-0.5);
\draw [>->] (3.5,-0.5) -- (3.5,-1.5);

\draw [>-]
	(4,2) arc (90:0:1.5);
\draw [*->]
	(5.5,0.5) arc (0:-90:1.5);

\draw [<-]
	(4,1) arc (90:0:.5);
\draw [*-<]
	(4.5,0.5) arc (0:-90:.5);
\end{tikzpicture}
\caption{On the left is an element of $\mathfrak{B}^{(4,2)}$, and on the right an element of $\mathfrak{B}_*^{(4,2)}$.}
\label{fig:type.B.configs}
\end{figure}
}
\end{proof}

This is essentially the maximum amount of information about the general partition function which is extractable
from the Yang-Baxter equation. In subsequent sections, we use combinatorial methods to say more about certain
specializations of these weights.

\section{Recovering Okada's and Simpson's Weyl denominator deformations\label{okadamatch}}

In \cite{okada}, Okada defines four families of ASM or ``ASM-like" matrices that he calls $B_n, C_n, C_n'$ and $D_n$; these correspond to admissible states of our ice models $\mathfrak{B}^\rho, \mathfrak{C}^\rho, \mathfrak{C}_*^\rho$ and $\mathfrak{D}^\rho$ under the bijection discussed at the end of Section~\ref{classyice}. Here, as usual, $\rho = [n,n-1,\cdots,1]$.  Later in \cite{simpson}, Simpson introduced another family called $B_n'$, and these matrices correspond to admissible states of the ice model $\mathfrak{B}_*^\rho$. Simpson remarks that matrix transposition gives a bijection between elements of $B_n'$ and $C_n'$.  Likewise, matrix transposition gives a bijection between the half-turn symmetric matrices which correspond to our models $\mathfrak{D}^\rho$ and $\mathfrak{BC}^\rho$.  The matrices corresponding to $\mathfrak{BC}^\rho$ do not appear to have been previously studied in the literature.

In both \cite{okada} and \cite{simpson}, weights are assigned to these matrices based on certain global statistics (i.e., calculated by analyzing relations between entries across the entire matrix). The resulting generating function summed over each family of matrices is shown to be a deformation of the Weyl denominator for the respective classical Lie group as follows:
\begin{equation}\label{eq:okada.deformations}
\begin{split}
\mathcal{Z}(B_n):&=\sum_{\widehat{A} \in B_n} \text{wt}(\widehat{A}) = \prod_{j=1}^n (1-t x_j) \prod_{j<k}(1-t^2x_jx_k^{-1})(1-t^2x_jx_k)\\
\mathcal{Z}(B'_n):&=\sum_{\widehat{A} \in B'_n} \text{wt}(\widehat{A}) = \prod_{j=1}^n (1+ t x_j) \prod_{j<k}(1+t x_jx_k^{-1})(1+t x_jx_k)\\
\mathcal{Z}(C_n):&=\sum_{\widehat{A} \in C_n} \text{wt}(\widehat{A}) = \prod_{j=1}^n (1-t x_j)(1+t^2 x_j) \prod_{j<k}(1-t^2x_jx_k^{-1})(1-t^2x_jx_k)\\
\mathcal{Z}(C'_n):&=\sum_{\widehat{A} \in C'_n} \text{wt}(\widehat{A}) = \prod_{j=1}^n (1+t x_j^2) \prod_{j<k}(1+t x_jx_k^{-1})(1+t x_jx_k)\\
\mathcal{Z}(D_n):&=\sum_{\widehat{A} \in D_n} \text{wt}(\widehat{A}) = \prod_{j<k}(1+t x_jx_k^{-1})(1+t x_jx_k).
\end{split}
\end{equation}
(The factorization of $\mathcal{Z}(C'_n)$ in \cite{okada} contains a small typo that we have corrected here.) 

Note that by specializing $t$ (to $\pm 1$, depending on the family), one recovers the Weyl denominator of the corresponding Lie group.  The function ``$\text{wt}$'' on the left-hand side varies somewhat for each case, and will be recalled shortly for the $B_n$ identity. In this section, we show that this ad-hoc collection of functions has an orderly explanation in the context of lattice models. More precisely, the ``$\text{wt}$'' function in each case is realized by a uniform choice of Boltzmann weights under the natural bijection of ASM-like matrices and ice given at the end of Section~\ref{classyice}. Better still, the natural geometry of lattice models and their boundary conditions leads us to a previously undiscovered generating function identity for the deformed denominator of type $BC$ appearing in the character formula of Proctor. All of these denominator identities will be further generalized to deformed Weyl character formula identities in Section~\ref{specwcf}.

The particular choice of free-fermionic Boltzmann weights that achieves these deformations and a few related weighting schemes are presented in the next definition. For linguistic convenience, we refer to the weights which recover Okada's results as the ``Okada weights'' for the ice models; in the family $\mathfrak{B}_*$ the Okada weights recover Simpson's results, and in the family $\mathfrak{BC}$ they return the aforementioned deformed denominator identity for type $BC$.

\begin{definition*}
Let $\star = 0$ in the $\mathfrak{B}_*$ and $\mathfrak{C}$ families, and $\star = n$ in the $\mathfrak{BC}$ family.  Then the deformation weights are
\begin{gather*}
a_1^{(j)} = a_2^{(j)} = 1, \quad b_1^{(j)} = i t_jx_j, \quad b_2^{(j)}=i t_jx_j^{-1}, \quad c_1^{(j)}=1-t_j^2, \quad c_2^{(j)}=1\\
a^{(\star)}=1, \quad b^{(\star)}=it_\star x_\star, \quad c_1^{(\star)} = 1-t_\star^2x_\star^2, \quad c_2^{(\star)}=1
\end{gather*}
together with the bending weights from Table \ref{tab:summarized.bending.weights}. 
The Okada weights are the specialization of the deformation weights where we assign $x_0=-1$ in the $\mathfrak{C}^\lambda$ model, $x_0=1$ in the $\mathfrak{B}_*^\lambda$ model, and $x_n = 1$ in the $\mathfrak{BC}^\lambda$ model, and we let 
$$t_j = \left\{\begin{array}{ll}
t,&\text{ for models }\mathfrak{B}^\lambda,\mathfrak{C}^\lambda\\
i~\root\of{t},&\text{ for models }\mathfrak{B}_*^\lambda, \mathfrak{C}_*^\lambda,\mathfrak{D}^\lambda, \mathfrak{BC}^\lambda.
\end{array}\right.$$
\end{definition*}

By using Theorem \ref{prop:divisibility.for.partition.functions} and Proposition \ref{prop:partition.functions.for.rho}, the deformation weights give a slightly more general formulation of the identities in (\ref{eq:okada.deformations}), allowing for $t_j$'s to depend on the spectral index $j$ and adding an $x$-parameter to the central row for  $\mathfrak{B}_*^\lambda$ and  $\mathfrak{C}^\lambda$.

\begin{corollary}\label{cor:weyl.denominator}
Using the deformation weights, we have
\begin{align*}
&\prod_{j\leq n} (1-t_jx_j) \prod_{j<k\leq n} \left(1-t_jt_kx_jx_k^{-1}\right)\left(1-t_jt_kx_jx_k\right)  ~\mathlarger{\mathlarger{\mathlarger{\mid}}} ~\mathcal{Z}(\mathfrak{B}^\lambda)\\[10pt]
&\prod_{j\leq n} (1-t_0t_jx_0x_j) \prod_{j<k\leq n} \left(1-t_jt_kx_jx_k^{-1}\right)\left(1-t_jt_kx_jx_k\right)  ~\mathlarger{\mathlarger{\mathlarger{\mid}}} ~\mathcal{Z}(\mathfrak{B}_*^\lambda)\\[10pt]
&\prod_{j\leq n} (1-t_jx_j)\left(1-t_jt_0x_0x_j\right) \prod_{j<k\leq n} \left(1-t_jt_kx_jx_k^{-1}\right)\left(1-t_jt_kx_jx_k\right)  ~\mathlarger{\mathlarger{\mathlarger{\mid}}} ~\mathcal{Z}(\mathfrak{C}^\lambda)\\[10pt]
&\prod_{j\leq n} (1-t_j^2x_j^2) \prod_{j<k\leq n} \left(1-t_jt_kx_jx_k^{-1}\right)\left(1-t_jt_kx_jx_k\right)  ~\mathlarger{\mathlarger{\mathlarger{\mid}}} ~\mathcal{Z}(\mathfrak{C}_*^\lambda)\\[10pt]
&\prod_{j<k\leq n} \left(1-t_jt_kx_jx_k^{-1}\right)\left(1-t_jt_kx_jx_k\right) ~\mathlarger{\mathlarger{\mathlarger{\mid}}} ~ \mathcal{Z}(\mathfrak{D}^\lambda) &\text{(if $1 \in \lambda$)}\\[10pt]
&\prod_{j\leq n} \left(1-t_j^2 x_j^2\right) \prod_{j<k\leq n}\left(1-t_jt_kx_jx_k^{-1}\right)\left(1-t_jt_kx_jx_k\right)  ~\mathlarger{\mathlarger{\mathlarger{\mid}}} ~ \mathcal{Z}(\mathfrak{D}^\lambda) &\text{(if $1 \not\in \lambda$)}\\[10pt]
&\prod_{j< n} (1-t_nt_jx_nx_j)(1-t_j^2x_j^2) \prod_{j<k< n} \left(1-t_jt_kx_jx_k^{-1}\right)\left(1-t_jt_kx_jx_k\right)  ~\mathlarger{\mathlarger{\mathlarger{\mid}}} ~\mathcal{Z}(\mathfrak{BC}^\lambda)\\[10pt]
\end{align*}
Furthermore, these divisibilities are equalities when $\lambda = \rho$.
\end{corollary}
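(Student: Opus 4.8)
The plan is to recognize this corollary as a pure specialization of Theorem~\ref{prop:divisibility.for.partition.functions} and Proposition~\ref{prop:partition.functions.for.rho}, both of which hold for \emph{any} free-fermionic Boltzmann weights using the bending conventions of Table~\ref{tab:summarized.bending.weights}. First I would confirm that the deformation weights are an admissible instance of those results, i.e. that they satisfy the free-fermion condition $\Delta^{(j)}=0$. Substituting $a_1^{(j)}=a_2^{(j)}=1$, $b_1^{(j)}=it_jx_j$, $b_2^{(j)}=it_jx_j^{-1}$, $c_1^{(j)}=1-t_j^2$, $c_2^{(j)}=1$ into~(\ref{deltadefined}) gives $1+(it_jx_j)(it_jx_j^{-1})-(1-t_j^2)=0$, and the central-row relation $\left(a^{(\star)}\right)^2+\left(b^{(\star)}\right)^2=c_1^{(\star)}$ holds since $1+(it_\star x_\star)^2=1-t_\star^2x_\star^2$; note also that $c_2^{(j)}=1$ matches the normalization used throughout Section~\ref{usingYBE}. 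Thus every hypothesis is in force and each conclusion of the two cited results specializes to the deformation weights.

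The substitution is a ring homomorphism from the polynomial ring in the abstract weights to the Laurent ring $\Z[i,t_j^{\pm1},x_j^{\pm1}]$, and divisibility is carried to divisibility by any ring homomorphism; hence it suffices to check that each abstract factor appearing in Theorem~\ref{prop:divisibility.for.partition.functions} maps to the corresponding factor in the corollary. This is a routine factor-by-factor verification:
$$a_2^{(j)}+i~b_1^{(j)}\mapsto 1-t_jx_j,\quad a^{(\star)}a_2^{(j)}+b_1^{(j)}b^{(\star)}\mapsto 1-t_\star t_jx_\star x_j,\quad \left(a_2^{(j)}\right)^2+\left(b_1^{(j)}\right)^2\mapsto 1-t_j^2x_j^2,$$
while the two families of quadratic factors specialize as $a_1^{(k)}a_2^{(j)}+b_1^{(j)}b_2^{(k)}\mapsto 1-t_jt_kx_jx_k^{-1}$ and $a_2^{(j)}a_2^{(k)}+b_1^{(k)}b_1^{(j)}\mapsto 1-t_jt_kx_jx_k$. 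Running through the seven displayed divisibilities of Theorem~\ref{prop:divisibility.for.partition.functions} and applying these identities (with $\star=0$ for the $\mathfrak{B}_*$ and $\mathfrak{C}$ families and $\star=n$ for $\mathfrak{BC}$) reproduces exactly the seven divisibilities claimed in the corollary.

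Finally, for $\lambda=\rho$ the equalities follow from Proposition~\ref{prop:partition.functions.for.rho} by the same specialization: that proposition asserts the abstract divisibilities are equalities of polynomials when $\lambda=\rho$, and equality of polynomials is preserved under the substitution homomorphism, so $\mathcal{Z}(\mathfrak{M}^\rho)$ equals its deformed denominator factor on the nose. I do not expect any genuine obstacle beyond this bookkeeping; the one delicate point requiring a moment's care is to confirm that the bending weights $D^{(j)}$, $L$, $R$ induced by the deformation weights agree with those prescribed in Table~\ref{tab:summarized.bending.weights} (for instance, in the $\mathfrak{C}$ family one checks $L=a^{(0)}-i~b^{(0)}=1+t_0x_0$ against $R=1$, matching $D^{(j)}/U^{(j)}=i$), so that the fish and jellyfish lemmas underlying Theorem~\ref{prop:divisibility.for.partition.functions} genuinely apply to this specialization.
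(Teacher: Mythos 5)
Your proof is correct and is essentially the argument the paper intends: the corollary is stated without an explicit proof precisely because it is the immediate specialization of Theorem~\ref{prop:divisibility.for.partition.functions} and Proposition~\ref{prop:partition.functions.for.rho} to the deformation weights, and you verify the two points that make this legitimate (the weights satisfy $\Delta^{(j)}=0$ and the central-row relation, and the bending weights agree with Table~\ref{tab:summarized.bending.weights}). Your factor-by-factor dictionary, e.g.\ $a_2^{(j)}+i\,b_1^{(j)}\mapsto 1-t_jx_j$ and $a_2^{(j)}a_2^{(k)}+b_1^{(k)}b_1^{(j)}\mapsto 1-t_jt_kx_jx_k$, together with the observation that a ring homomorphism preserves divisibility and equality, is exactly the bookkeeping the paper leaves implicit.
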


It is clear that when we use the Okada weights, the above expressions recover the partition functions for the classes of ASMs studied by Okada and Simpson.  Moreover, the natural bijection of Section~\ref{classyice} between ASMs and ice is weight preserving, as demonstrated by the following result.

\begin{proposition}\label{prop:okada.bijection}
Let $A \in \{\mathfrak{B}^\rho,\mathfrak{C}^\rho,\mathfrak{C}_*^\rho,\mathfrak{D}^\rho\}$ (resp., $A \in \mathfrak{B}_*^\rho$), and let $\widehat{A}$ be the corresponding half-turn symmetric matrix.  Let $\text{wt}(A)$ be the weight of $A$ as calculated by the Okada weights defined above, and let $\text{wt}(\widehat{A})$ be the weight associated to $\widehat{A}$ in \cite{okada} (resp., in \cite{simpson}).  Then $\text{wt}(\widehat{A}) = \text{wt}(A)$.
\end{proposition}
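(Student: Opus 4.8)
The plan is to prove $\text{wt}(\widehat A)=\text{wt}(A)$ by a direct, entry-by-entry comparison of the two weight functions across the explicit bijection of Section~\ref{classyice}, carrying out the $\mathfrak{B}^\rho$ family (where $\text{wt}(\widehat A)$ is Okada's $B_n$ weight) in full and indicating the routine changes for the others. First I would recall Okada's definition of $\text{wt}(\widehat A)$ and re-express it, using its explicit form, as a product of purely \emph{local} contributions indexed by the entries of $\widehat A$ together with the row in which each nonzero entry sits: although Okada phrases the weight through global statistics, its value factors as a Laurent monomial in the $x_j$ (one variable per $0$-entry of a prescribed type) times a scalar contribution per $\pm1$ entry, so no genuinely nonlocal data is lost. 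In parallel, $\text{wt}(A)$ is by definition the product of Boltzmann weights over all vertices of $A$: the $2n\times n$ array of full-column vertices --- which is exactly the data read into the left half of the $2n\times 2n$ matrix $\widehat A$, the right half being fixed by half-turn symmetry --- together with the bend vertices. I would organize this product row by row, using the symmetry conditions on the Boltzmann weights ($a_1^{(j)}=a_2^{(\overline j)}$, $b_1^{(j)}=b_2^{(\overline j)}$, $c_1^{(j)}=c_1^{(\overline j)}$, $c_2^{(j)}=c_2^{(\overline j)}$) to rewrite the contributions of each lower row $\overline j$ in terms of the single variable $x_j$, so that both $\text{wt}(A)$ and $\text{wt}(\widehat A)$ live in the same $n$ variables $x_1,\dots,x_n$.

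The core step is then the dictionary. Under the Okada specialization of the deformation weights, the vertex contributions are $a_1^{(j)}=a_2^{(j)}=1$, $c_2^{(j)}=1$, $b_1^{(j)}=it_jx_j$, $b_2^{(j)}=it_jx_j^{-1}$, and $c_1^{(j)}=1-t_j^2$, while each bend contributes $U^{(j)}=1$ or $D^{(j)}$ as recorded in Table~\ref{tab:summarized.bending.weights}. Translating through the dictionary (horizontal-inward $\mapsto +1$, vertical-inward $\mapsto -1$, otherwise $0$), the $+1$ entries ($c_2$) and the two trivial $0$-types ($a_1,a_2$) contribute $1$; a $-1$ entry ($c_1$) contributes the binomial $1-t_j^2$; and the remaining $0$-entries ($b_1,b_2$) contribute the monomials $it_jx_j^{\pm1}$ that supply all the $x$-dependence, the sign of the exponent being dictated by whether the vertex is of type $b_1$ or $b_2$. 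I would then verify that each of these equals the corresponding local factor of the rewritten $\text{wt}(\widehat A)$: the power of $x_j$ accumulated across rows $j$ and $\overline j$ matches the $x_j$-exponent of $\text{wt}(\widehat A)$ because the spectral index records exactly the row, and the $t$-powers and the binomials $1-t_j^2$ match because these are controlled entry for entry by the $b$- and $c_1$-vertices, i.e.\ by the $0$- and $-1$-entries of $\widehat A$. Under the Okada/Simpson specialization ($t_j=t$ for $\mathfrak{B},\mathfrak{C}$ and $t_j=i\sqrt t$ otherwise, together with the stated values of $x_\star$), these local factors become precisely Okada's (and, for $\mathfrak{B}_*$, Simpson's) per-entry contributions.

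The hard part is the bookkeeping of the non-monomial scalars and the overall sign. Both the $b$-vertices and the $D$-bends carry factors of $i$, whereas $\text{wt}(\widehat A)$ is a Laurent monomial in the $x_j$ with polynomial $t$-coefficients and no $i$; hence one must show that, for every individual admissible state, the total $i^{\#b_1+\#b_2+\#\{D\text{-bends}\}}$ collapses to a sign depending only on $n$, not on the state. I expect to establish this by a parity argument within each row-pair $(j,\overline j)$: the $a$- and $b$-vertices preserve the horizontal arrow direction along a row while the $c_1,c_2$-vertices reverse it, so the orientation reaching the bend end of a row is governed by the parity of the number of $c$-vertices in that row, which in turn forces the bend to be of type $U$ or $D$. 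Comparing these constraints across the pair pins down the parity of $\#b_1+\#b_2+\#\{D\text{-bends}\}$ and shows the residual sign is exactly absorbed by the chosen value $D^{(j)}=i$ from Table~\ref{tab:summarized.bending.weights} --- this is precisely the role of the conditions $(D^{(j)}/U^{(j)})^2=-1$ and $D^{(j)}/U^{(j)}=\mp i$ forced by the fish and jellyfish relations (Lemmas~\ref{le:fish.B}--\ref{le:fish.X}).

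Once this parity/sign identity is in hand, the entry-by-entry comparison yields $\text{wt}(A)=\text{wt}(\widehat A)$ for $\mathfrak{B}^\rho$. The families $\mathfrak{C},\mathfrak{C}_*,\mathfrak{D}$ and the Simpson case $\mathfrak{B}_*$ follow by the same argument after substituting the relevant central-row and corner weights ($a^{(\star)},b^{(\star)}$, and $L,R$) and the corresponding global statistic of \cite{okada} or \cite{simpson}; in the families with $t_j=i\sqrt t$ the same parity count is what guarantees that the half-integer powers of $t$ coming from $b_1^{(j)},b_2^{(j)}$ recombine into the integer $t$-powers appearing in Okada's and Simpson's formulas.
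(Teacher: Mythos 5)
Your overall strategy---localize Okada's globally defined statistics and then compare vertex by vertex under the dictionary---is the same one the paper follows, but two of your steps conceal the actual mathematical content, and one of them is stated incorrectly. First, the assertion that Okada's weight ``factors as a product of purely local contributions \ldots so no genuinely nonlocal data is lost'' is precisely what has to be proved: the exponent of $t$ is $i(\widehat A)-s(\widehat A)$, where $i(\widehat A)=\sum_{i<k,\,j>l}\hat a_{ij}\hat a_{kl}$ is the inversion number, a sum over \emph{pairs} of entries. The paper's Lemma~\ref{le:statistic.on.b.vertices} is exactly this localization statement (each $b_1$ or $b_2$ vertex contributes $2$ to $2i(\widehat A)$, each $c_1$ vertex contributes $4$, and $a_1,a_2,c_2$ contribute nothing), proved by analyzing how the row and column through a given vertex pair against the rest of the matrix, and together with Lemma~\ref{le:minus.one.entries} it yields the $t$- and $(1-t^2)$-powers; your proposal takes all of this for granted. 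The same is true of the $x$-exponent: Okada's exponent is $\delta(B_n)-\widehat A\,\delta(B_n)$, and identifying it with the count $\#b_1^{(j)}-\#b_2^{(j)}-\#b_1^{(\overline j)}+\#b_2^{(\overline j)}$ is a genuine computation, which the paper carries out by factoring the bijection through sequences of interleaving partitions; ``the spectral index records exactly the row'' is a restatement of the goal, not an argument.

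Second, and more seriously, your sign claim is false: the total power $i^{\#b_1+\#b_2+\#\{D\}}$ does \emph{not} collapse to a sign depending only on $n$. It equals Okada's sign $(-1)^{i_1^+(\widehat A)+\frac{i_2(\widehat A)-s(\widehat A)}{2}}$, which varies with the state---it must, since $\mathcal{Z}(\mathfrak{B}^\rho)=\prod_{j}(1-tx_j)\prod_{j<k}(1-t^2x_jx_k^{-1})(1-t^2x_jx_k)$ is an alternating sum; already for $n=1$ the two admissible states have weights $1$ and $-tx_1$. Consequently no parity argument internal to a single row pair, and no appeal to the free-fermionic constraints $D^{(j)}/U^{(j)}=\pm i$, can settle the sign: one needs the state-dependent identity $i_1^+(\widehat A)-i_1^-(\widehat A)=\#\{D\text{-bends}\}$, which the paper obtains by comparing each bend orientation with the surplus of $+1$ over $-1$ entries in the corresponding row of the top-right quartile of $\widehat A$, and then combines with Lemma~\ref{le:statistic.on.b.vertices} to convert $\#b_1+\#b_2+\#\{D\}$ into Okada's exponent. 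Without these two ingredients---the localization of the inversion number and the bend/quartile identity---your outline reduces to asserting the proposition to be proved.
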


Before proving this result, we will need some preparatory definitions and lemmas.  For simplicity, we restrict our attention to the case of type $B$; the others follow in a similar way.  As above, $\widehat{A} = (\hat{a}_{ij})$ denotes the $2n \times 2n$ half-turn symmetric alternating sign matrix associated to $A \in \mathfrak{B}^\rho$.  We write $i(\widehat{A})$ for the inversion number of $\widehat{A}$: $i(\widehat{A}) := \sum_{i<k,j>l} \hat a_{ij} \hat a_{kl}$.  The total number of $-1$ entries of $\widehat{A}$ will be denoted $s(\widehat{A})$.  The terms $i_1^+(\widehat{A})$ and $i_1^-(\widehat{A})$ denote the number of $+1$ and $-1$ entries in the top right quartile of $\widehat{A}$.  Let $i_1(\widehat{A}) = i_1^+(\widehat{A}) + i_1^-(\widehat{A})$, and $i_2(\widehat{A}) = i(\widehat{A}) - i_1(\widehat{A})$.  Finally, define the vector $$\delta(B_n) = [n-1/2, n-3/2, \ldots, 1/2, -1/2, \ldots, -(n-3/2), -(n-1/2)], $$
and let $\boldsymbol{x}^\alpha$ for $\alpha = (\alpha_1, \ldots, \alpha_n, -\alpha_n, \ldots, -\alpha_1)$ be defined as $x_1^{\alpha_1} \cdots x_n^{\alpha_n}$.

With this notation in hand, Okada defines
\begin{equation}\label{eq:okadas.weight.for.matrix}
\begin{split}
\text{wt}(\widehat{A}) &= (-1)^{i_1^+(\widehat{A}) + \frac{i_2(\widehat{A})}{2}} t^{i(\widehat{A})} \left(1-\frac{1}{t^2}\right)^{s(\widehat{A})/2} \boldsymbol{x}^{\delta(B_n)-A\delta(B_n)}\\
&= (-1)^{i_1^+(\widehat{A}) + \frac{i_2(\widehat{A})-s(\widehat{A})}{2}} t^{i(\widehat{A})-s(\widehat{A})} \left(1-t^2\right)^{s(\widehat{A})/2} \boldsymbol{x}^{\delta(B_n)-A\delta(B_n)}.
\end{split}
\end{equation}

Before showing $\text{wt}(\widehat{A})=\text{wt(A)}$ we establish a few lemmas.

\begin{lemma}\label{le:minus.one.entries}
The number of $c_1$ vertices in $A$ is $s(\widehat{A})/2$.
\end{lemma}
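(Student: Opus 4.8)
The plan is to read the claim directly off the bijection $A \mapsto \widehat A$ of Section~\ref{classyice}, since it is a purely combinatorial statement independent of the Boltzmann weights. First I would recall that for $A \in \mathfrak B^\rho$ the grid has $2n$ rows and $\lambda_1 = n$ columns, hence $2n^2$ tetravalent vertices, and that each such vertex determines exactly one entry in one half --- say the left half --- of the $2n \times 2n$ half-turn symmetric matrix $\widehat A$, the right half being filled in by symmetry. Under the weight dictionary a vertex receives a $-1$ precisely when its only inward-pointing arrows are vertical, i.e.\ precisely when it carries a $c_1$ decoration (a $c_2$ decoration gives $+1$, while $a_1, a_2, b_1, b_2$ give $0$). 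The bend vertices on the right carry only the $U$ and $D$ weights and contribute no matrix entry, so they play no role. Thus the number of $c_1$ vertices of $A$ equals the number $N$ of $-1$ entries in the left half of $\widehat A$.

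Next I would use the half-turn symmetry $\hat a_{ij} = \hat a_{2n+1-i,\,2n+1-j}$. The involution $(i,j) \mapsto (2n+1-i,\,2n+1-j)$ of the $2n \times 2n$ grid has no fixed point, since $i = 2n+1-i$ would force $i = n + \tfrac12 \notin \Z$, and it sends every position with column index at most $n$ to one with column index at least $n+1$. Hence it pairs the left half bijectively with the right half, and paired entries are equal, so the right half also contains exactly $N$ entries equal to $-1$. As the two halves partition all of $\widehat A$, the total number of $-1$ entries is $s(\widehat A) = 2N$, which is exactly the asserted identity $N = s(\widehat A)/2$.

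I do not anticipate any real obstacle: the lemma is essentially bookkeeping for the bijection. The one point worth pinning down is that the half-turn involution is fixed-point-free and interchanges the two halves, which is where evenness of the matrix dimension $2n$ in type $B$ enters and is precisely what makes the factor of two exact. For the related models possessing a central row or column one would instead have to treat separately the entries fixed by the symmetry, but that issue does not arise for $\mathfrak B^\rho$.
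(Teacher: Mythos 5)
Your proof is correct and follows essentially the same route as the paper's: identify $c_1$ vertices with the $-1$ entries in the left half of $\widehat{A}$ via the dictionary, then note that half-turn symmetry accounts for the $-1$ entries in the right half. You simply make explicit the bookkeeping (the fixed-point-free involution pairing the two halves) that the paper leaves implicit.
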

\begin{proof}
We know that $c_1$ vertices in $A$ correspond precisely to $-1$ entries in the left half of $\widehat{A}$, and that $-1$ entires on the right half of $\widehat{A}$  arise from half-turn symmetry.
\end{proof}

\begin{lemma}\label{le:statistic.on.b.vertices}
The total number of $b_1$ and $b_2$ vertices in $A$ is equal to $i(\widehat{A}) -s(\widehat{A})$.
\end{lemma}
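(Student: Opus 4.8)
The plan is to reduce the statement, which mixes a \emph{local} count of vertices in the half-diagram $A$ with a \emph{global} statistic of the full matrix $\widehat A$, to a clean identity for an arbitrary alternating sign matrix, and then to exploit half-turn symmetry. First I would record how the $180^\circ$ rotation acts on vertex types: it sends $a_1\leftrightarrow a_2$, $b_1\leftrightarrow b_2$, and fixes both $c_1$ and $c_2$. Since the right half of the six-vertex configuration attached to $\widehat A$ is exactly the rotation of the left half $A$, the multiset of $b$-vertices in the right half equals that of $A$ with the two $b$-labels swapped; in particular the \emph{total} $b$-count is the same in both halves, so $N_{b_1}(\widehat A)+N_{b_2}(\widehat A)=2\bigl(N_{b_1}(A)+N_{b_2}(A)\bigr)$. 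Thus it suffices to prove the purely matrix-theoretic identity $N_{b_1}(\widehat A)+N_{b_2}(\widehat A)=2\,i(\widehat A)-2\,s(\widehat A)$, after which dividing by $2$ gives the lemma.

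Next I would set up prefix sums. For an $N\times N$ ASM with entries $\hat a_{ij}$ write $R^{<}_{ij}=\sum_{l<j}\hat a_{il}$ and $C^{<}_{ij}=\sum_{k<i}\hat a_{kj}$; both lie in $\{0,1\}$ by the defining prefix-sum property of ASMs. Reading off arrow directions from the boundary, the horizontal edge to the left of cell $(i,j)$ points inward (rightward) exactly when $R^{<}_{ij}=0$, and the vertical edge above it points upward exactly when $C^{<}_{ij}=0$. Comparing with Figure~\ref{fig:six.vertex.model}, a cell with $\hat a_{ij}=0$ is a $b$-vertex (type $SW$ or $NE$) precisely when its horizontal and vertical flows agree, i.e. when $R^{<}_{ij}=C^{<}_{ij}$, while the $a$-vertices are exactly the $0$-entries with $R^{<}_{ij}\neq C^{<}_{ij}$. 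The key bookkeeping point — and the step I expect to be the main obstacle — is that the $\pm1$ entries \emph{also} satisfy $R^{<}_{ij}=C^{<}_{ij}$: one checks directly that a $c_2$ vertex (a $+1$) has $R^{<}_{ij}=C^{<}_{ij}=0$ and a $c_1$ vertex (a $-1$) has $R^{<}_{ij}=C^{<}_{ij}=1$. Hence summing the indicator $[R^{<}_{ij}=C^{<}_{ij}]$ over \emph{all} cells overcounts $N_{b_1}+N_{b_2}$ by exactly $N_{c_2}+N_{c_1}$, and using $N_{c_1}=s(\widehat A)$ (cf. Lemma~\ref{le:minus.one.entries}) together with $N_{c_2}-N_{c_1}=N$ this gives
\[ N_{b_1}(\widehat A)+N_{b_2}(\widehat A)=\sum_{i,j}\bigl[R^{<}_{ij}=C^{<}_{ij}\bigr]-N-2\,s(\widehat A). \]

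Finally I would evaluate the all-cells sum. Writing $[R^{<}_{ij}=C^{<}_{ij}]=1-R^{<}_{ij}-C^{<}_{ij}+2R^{<}_{ij}C^{<}_{ij}$ and using that $\sum_{i,j}R^{<}_{ij}=\sum_{i,j}C^{<}_{ij}=\binom{N}{2}$ (each follows from the column/row sums being $1$), the sum collapses to $N+2\sum_{i,j}R^{<}_{ij}C^{<}_{ij}$. The crux is then the observation that this last sum is literally the inversion number:
\[ \sum_{i,j}R^{<}_{ij}C^{<}_{ij}=\sum_{i,j}\Bigl(\sum_{l<j}\hat a_{il}\Bigr)\Bigl(\sum_{k<i}\hat a_{kj}\Bigr)=\sum_{k<i,\;l<j}\hat a_{il}\,\hat a_{kj}=i(\widehat A), \]
where the final equality is just the definition $i(\widehat A)=\sum_{i'<k',\,j'>l'}\hat a_{i'j'}\hat a_{k'l'}$ after identifying $(k,j)$ as the upper-right entry and $(i,l)$ as the lower-left entry of each contributing pair. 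Substituting back yields $N_{b_1}(\widehat A)+N_{b_2}(\widehat A)=(N+2\,i(\widehat A))-N-2\,s(\widehat A)=2\,i(\widehat A)-2\,s(\widehat A)$, and the half-turn reduction of the first paragraph completes the proof. As a sanity check one can note that for permutation matrices ($s=0$) this recovers the familiar bijection in which each inversion produces exactly one $SW$ and one $NE$ vertex, so that $N_{b_1}+N_{b_2}=2\,i$.
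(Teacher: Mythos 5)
Your proof is correct, and it takes a genuinely different route from the paper's. The paper argues vertex by vertex: for each vertex of the bent model $A$ it computes the net contribution to $i(\widehat{A})$ coming from inversion pairs $\hat a_{ij}\hat a_{kl}$ having one entry in that vertex's row and the other in its column (a $b_1$ or $b_2$ vertex together with its half-turn image contributes $2$, a $c_1$ vertex contributes $4$, while $a_1,a_2,c_2$ contribute $0$), then observes that summing these anchored contributions over all positions counts each inversion pair exactly twice, giving $2\#\{b_1\}+2\#\{b_2\}+4\#\{c_1\}=2i(\widehat{A})$, and finishes by invoking Lemma~\ref{le:minus.one.entries}. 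You instead prove a cleaner and strictly more general statement --- for an \emph{arbitrary} $N\times N$ ASM one has $N_{b_1}+N_{b_2}=2i-2s$ --- by encoding vertex types through the prefix sums $R^{<},C^{<}$, expanding the indicator $[R^{<}=C^{<}]$ as a polynomial, and recognizing $\sum_{i,j}R^{<}_{ij}C^{<}_{ij}=i(\widehat{A})$ after interchanging summation; half-turn symmetry enters only at the very end, via the rotation action $b_1\leftrightarrow b_2$ on vertex types, to halve the count (so you never need Lemma~\ref{le:minus.one.entries} at all). I checked the individual steps: the prefix-sum dictionary ($b$-vertices are the $0$-entries with $R^{<}=C^{<}$, $a$-vertices those with $R^{<}\neq C^{<}$, $\pm1$ entries having $R^{<}=C^{<}$ equal to $0$ resp.\ $1$), the evaluation $\sum R^{<}=\sum C^{<}=\binom{N}{2}$ from unit column/row sums, and the reindexing that identifies $\sum R^{<}C^{<}$ with the inversion number are all right. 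Your approach buys the absence of case analysis, a standalone identity for all ASMs, and the transparent permutation-matrix sanity check; the paper's approach buys row/column-localized bookkeeping of the same flavor it reuses elsewhere in the proof of Proposition~\ref{prop:okada.bijection}, and it verifies explicitly the pairing of each vertex with its half-turn image, which your first paragraph asserts rather than checks --- though that assertion is easily confirmed with your own prefix sums (half-turn symmetry sends $(R^{<},C^{<})$ to $(1-R^{<},1-C^{<})$ at $0$-entries) and is consistent with the rotation discussion preceding the Symmetry Assumption in Section 3.
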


\begin{proof}
Suppose we are at the vertex in row $r$ and column $c$ of $A \in \mathfrak{B}^\rho$ for some $1 \leq c \leq n$, and that this vertex is decorated $b_1$.  In $\widehat{A}$ this means there are as many entries equal to $-1$ as $+1$ above $\hat a_{rc}$, and as many entries equal to $-1$ as $+1$ to the left of $\hat a_{rc}$.  Hence we have $\sum_{1\leq i <r} \sum_{1 \leq l <c}\hat a_{ic} \hat a_{rl} = 0$.  Likewise there is one more entry equal to $+1$ than $-1$ to the right of $a_{rc}$, and one more entry equal to $+1$ than $-1$ below $\hat a_{rc}$.  Hence $\sum_{r < i \leq 2n} \sum_{c< l \leq 2n} \hat a_{ic} \hat a_{rl} = 1$.  Therefore the net contribution to the inversion number that comes from terms $\hat a_{ij} \hat a_{kl}$ where one entry sits in row $r$ and the other sits in column $c$ is $1$. Half turn symmetry tells us that the net contribution to the inversion number from pairs $\hat a_{ij} \hat a_{kl}$ where one entry sits in row $2n-r$ and the other sits in column $2n-c$ is also $1$.

In a similar way, one can show that there is a net contribution of $2$ to the inversion number of $\widehat{A}$ for every $b_1$ vertex in row $\bar r$ and column $c$ of $A$, and from every $b_2$ vertex in rows $r$ or $\bar r$ and column $c$.  Vertices decorated $c_1$ in $A$ give a net contribution of $4$ to the inversion number of $\widehat{A}$.  On the other hand, vertices decorated $a_1, a_2$ or $c_2$ in $A$ yield no contribution to the inversion number of $\widehat{A}$.

Finally, note that if we sum over all such terms then we overcount the inversion number actually by a factor of $2$: a positive contribution coming from $\hat a_{ij} \hat a_{kl}$ will appear when examining contributions from row $i$ and column $l$ as well as from row $j$ and column $k$.  Hence we have
$$2\#\{b_1 \mbox{ vertices}\} + 2\#\{b_2 \mbox { vertices}\} + 4\#\{c_1 \mbox{ vertices}\} = 2i(\widehat{A}).$$  Now we apply Lemma \ref{le:minus.one.entries}.
\end{proof}

\begin{proof}[Proof of Proposition \ref{prop:okada.bijection}]
Factors of $(1-t^2)$ in $\text{wt}(A)$ come exclusively from $c_1$ vertices, so the power of $(1-t^2)$ in $\text{wt}(A)$ is given by the number of $c_1$ vertices in $A$.  This equals the power of $(1-t^2)$ in $\text{wt}(\widehat{A})$ according to Lemma \ref{le:minus.one.entries}.

The only vertex decorations which contribute a factor of $t$ to $\text{wt}(A)$ are $b_1$ and $b_2$, and from Lemma \ref{le:statistic.on.b.vertices} we know that the total number of $b_1$ and $b_2$ vertices in $A$ equals $i(\widehat{A})-s(\widehat{A})$.  Hence the powers of $t$ in $\text{wt}(\widehat{A})$ and $\text{wt}(A)$ agree as well.

To see that the signs of $\text{wt}(\widehat{A})$ and $\text{wt}(A)$ agree, note first that the vertex decorations that contribute to the sign of $\text{wt}(A)$ are precisely the $b_1, b_2$ and $D$ vertices, and that each of these contributes a power of $i$.  Hence the sign of $\text{wt}(A)$ is $-1$ raised to half the sum of the total number of $b_1$, $b_2$ and $D$ vertices in $A$.  Observe further that the vertex $D^{(j)}$ indicates that in the top right quartile of $\widehat{A}$ will have one more $+1$ entry than $-1$ entry in row $j$, whereas the vertex $U^{(j)}$ indicates that the top right quartile of $\widehat{A}$ has an equal number of $+1$ and $-1$ entries in row $j$.  Summing over all $j$, we then have
$$i_1^+(\widehat{A})-i_1^-(\widehat{A}) = \#\{D \mbox{ vertices}\}.$$  Recalling that the total numbr of $b_1$ and $b_2$ vertices is given by $i(\widehat{A})-s(\widehat{A})$ and that $i(\widehat{A})=i_1^+(\widehat{A})+i_1^-(\widehat{A})+i_2(\widehat{A})$, we recover
\begin{equation*}
\begin{split}
\frac{\#\{b_1,b_2,D \mbox{ vertices}\}}{2} 
= i_1^{+}(\widehat{A})+\frac{i_2(\widehat{A})-s(\widehat{A})}{2}.
\end{split}
\end{equation*}

It remains to show that the powers of $\boldsymbol{x} = (x_1, \ldots, x_n)$ match in $\text{wt}(A)$ and $\text{wt}(\hat{A})$ under the bijection. To do this, we ``factor'' the bijection
of ice and ASMs into several intermediate steps, translating the statistics for the exponents of $\boldsymbol{x}$ along the way. First, map an admissible state of ice $A$ to a set of
partitions $\{ \rho^{(i)} \}_{i=1}^{2n+1}$ where $\rho^{(i)}$ records the column indices of all vertical edges between rows $i-1$ and $i$ with an up arrow. Thus it is always
true that $\rho^{(1)} = \rho$ and $\rho^{(2n+1)}=\varnothing$, the empty partition. For example, in $\mathcal{B}^{[2,1]}$, the correspondence is illustrated in the left-most
bijection below:
\begin{figure}
\centering 
\begin{subfigure}[!ht]{.25\textwidth}
\centering
\begin{tikzpicture}[scale=.75]
\node [label=left:$1$] at (0,2) {};
\node [label=left:$2$] at (0,1) {};
\node [label=left:$\overline{2}$] at (0,0) {};
\node [label=left:$\overline{1}$] at (0,-1) {};

\node [label=above:$2$] at (0.5,2.5) {};
\node [label=above:$1$] at (1.5,2.5) {};

\draw [>-] (0,2) -- (1,2);
\draw [>-] (0,1) -- (1,1);
\draw [>-] (0,0) -- (1,0);
\draw [>-] (0,-1) -- (1,-1);

\draw [<-] (0.5,2.5) -- (0.5,1.5);
\draw [<-] (0.5,1.5) -- (0.5,.5);
\draw [>-] (0.5,0.5) -- (0.5,-0.5);
\draw [>->] (0.5,-0.5) -- (0.5,-1.5);

\draw [>-] (1,2) -- (2,2);
\draw [<-] (1,1) -- (2,1);
\draw [>-] (1,0) -- (2,0);
\draw [>-] (1,-1) -- (2,-1);

\draw [<-] (1.5,2.5) -- (1.5,1.5);
\draw [>-] (1.5,1.5) -- (1.5,.5);
\draw [<-] (1.5,0.5) -- (1.5,-0.5);
\draw [>->] (1.5,-0.5) -- (1.5,-1.5);

\draw [<-]
	(2,2) arc (90:0:1.5);
\draw [*-<]
	(3.5,0.5) arc (0:-90:1.5);

\draw [>-]
	(2,1) arc (90:0:.5);
\draw [*->]
	(2.5,0.5) arc (0:-90:.5);
\end{tikzpicture}
\end{subfigure}
$\longleftrightarrow$
\begin{subfigure}[h]{.25\textwidth}
\centering
$$ \begin{array}{rc} \rho^{(1)} = & [2, 1] \\ \rho^{(2)} = & [2] \\ \rho^{(3)} = \rho^{(\overline{3})} = & [1] \\ \rho^{(4)} = \rho^{(\overline{2})} = & \varnothing \\ \rho^{(5)} = \rho^{(\overline{1})} = & \varnothing  \end{array} $$
\end{subfigure}
$\longleftrightarrow B = \begin{pmatrix} 1 & 1 \\ 1 & 0 \\ 0 & 1 \\ 0 & 0 \end{pmatrix} \longleftrightarrow C = \begin{pmatrix} 0 & 1 \\ 1 & -1 \\ 0 & 1 \\ 0 & 0 \end{pmatrix}$
\end{figure}

It is not hard to show that the adjacent partitions $\rho^{(i)}$ and $\rho^{(i+1)}$ must interleave, i.e., their parts $\rho^{(i)}_j$ must satisfy $\rho_j^{(i)} \geq \rho_{j}^{(i+1)} \geq \rho_{j+1}^{(i)}$. Moreover, the number of parts in $\rho^{(i)}$ minus the number of parts in $\rho^{(\overline{i})}$ is $n+1-i$. The sets of partitions with these restrictions, and for which $\rho^{(1)} = \rho$ and $\rho^{(2n+1)}=\varnothing$ are in bijection with states of $\mathcal{B}^\rho$. The parts of the partitions $\rho^{(i)}$ may be recorded in an $n \times 2n$ matrix $B = (b_{ij})$ by placing a 1 in position $b_{ij}$ if $j$ is a part of $\rho^{(i)}$ for $i \in [1, \ldots, 2n]$. Thus $a_{1,j}$ is always 1 for all $j$. Now, if $B(i)$ denotes the $i$-th row of the matrix $B$, form the matrix $C$ whose rows are $(B(1)-B(2), \ldots, B(2n-1) - B(2n), B(2n))$. Its entries are clearly $1,0,$ or $-1$. Finally, $C$ may be completed to a half-turn symmetric $2n \times 2n$ alternating sign matrix, and it is a simple exercise to verify this is $\hat{A}$ under the bijection referred to in Section~\ref{classyice}, taking $+1, -1$ in $\hat{A}$ to NS and EW vertices, respectively, in the state of ice $A$.

Recall that the power of $\boldsymbol{x}$ appearing in $\text{wt}(\hat{A})$ is $\boldsymbol{x}^{\delta(B_n) - \hat{A} \delta(B_n)}$. But if $C(i)$ denotes the $i$-th row of the $C$ matrix corresponding to $\hat{A}$ (and similarly for the matrix $B$ with rows $B(i)$), then 
$$ \boldsymbol{x}^{\hat{A} \delta(B_n)} = \prod_{i=1} x_i^{(C(i) - C(2n+1-i)) \cdot [n-1/2, \ldots, 1/2]^T} = \prod_{i=1} x_i^{(B(i) - B(i+1) - B(2n+1-i ) + B(2n+2-i)) \cdot [n-1/2, \ldots, 1/2]^T}, $$
where we take $C(2n+1) = B(2n+1) = \varnothing$ in the above products.
Let $\ell(\lambda)$ denote the number of parts of a partition $\lambda$ and $|\lambda|$ the sum of the parts. Since $B(i) \cdot [n-1/2, \ldots, 1/2]^T = |\rho^{(i)}| - \ell(\rho^{(i)}) / 2$, then $\boldsymbol{x}^{
\delta(B_n) - \hat{A} \delta(B_n)}$ is equal to
$$ \prod_{i=1}^n x_i^{\frac{2(n-i)+1}{2} +\frac{1}{2} [ \ell (\rho^{(i)}) - \ell (\rho^{(i+1)}) + \ell(\rho^{(\overline{i})}) - \ell(\rho^{(\overline{i+1})}) ] - |\rho^{(i)}| + |\rho^{(i+1)}| - |\rho^{(\overline{i})}| + |\rho^{(\overline{i+1})}|}. $$
Thus it remains to show that the above expression is equal to the power of $\boldsymbol{x}$ in $\text{wt}(A)$ for the admissible state $A$ corresponding to $\hat{A}$. But the exponent of $x_i$ in $\text{wt}(A)$ is just ($\#$ of $b_1^{(i)}$ vertices) $-$ ($\#$ of $b_2^{(i)}$ vertices) $-$ ($\#$ of $b_1^{(\overline{i})}$ vertices) $+$ ($\#$ of $b_2^{(\overline{i})}$ vertices). The reader may easily verify that a $b_1^{(i)}$ vertex occurs precisely in column $\rho^{(i)}_j$ when $\rho^{(i)}_j = \rho^{(i+1)}_j$ and that a $b_2^{(i)}$ vertex occurs along each column strictly between $\rho^{(i)}_j$ and $\rho^{(i+1)}_j$. (When $\ell(\rho^{(i+1)}) = \ell(\rho^{(i)})-1$, then $b_2^{(i)}$'s also appear along each column with index less than the smallest part of $\rho^{(i)}$.) Similar counts apply for their barred counterparts $b_1^{(\overline{i})}$ and $b_2^{(\overline{i})}$. Combining these two facts, then the power of $x_i$ in $\text{wt}(A)$ is computed using the difference of parts and the number of parts in $\rho^{(i)}$'s and comparing to the above displayed expression gives the result.
\end{proof}


\section{Specialization to Character Formulae \label{specwcf}}

In this section we specialize the deformation weights by setting $t_j=1$ for all $j$, and by setting $x_0=-1$ in the $\mathfrak{C}^\lambda$ model,  $x_0=1$ in the $\mathfrak{B}_*^\lambda$ model, and $x_n = 1$ in the $\mathfrak{BC}^\lambda$ model.  We call the resulting weights the \emph{character weights}.  We will show that under these weights, $\mathcal{Z}(\mathfrak{M}^\lambda)$ recovers the character formula for $\mathfrak{M} \in \{\mathfrak{B},\mathfrak{B}_*,\mathfrak{C},\mathfrak{C}_*,\mathfrak{D},\mathfrak{BC}\}$. More precisely, we recover the Weyl character formula for classical types and Proctor's character formula \cite{proctor} for type $BC$.

Notice that using the character weights, a decoration of $c_1$ is weighted $0$, as is an $L$ vertex of $\mathfrak{C}^\lambda$.  Hence by using these weights, the only configurations with nonzero weights are those which have no $c_1$ vertices.  One can then show that there is a single $c_2$ vertex in each row for the models $\mathfrak{B}^\lambda,\mathfrak{C}_*^\lambda,\mathfrak{D}^\lambda, \mathfrak{BC}^\lambda$, and a single $c_2$ vertex in each nonzero row for the models $\mathfrak{B}_*^\lambda$ and $\mathfrak{C}^\lambda$, with no $c_2$ entries in the $0$ row.  

Note that without $c_1$ vertices, the only way to reverse the orientation of arrows along horizontal edges in a row is via a $c_2$ vertex, switching rightward arrows to leftward arrows. If the $j$th bending vertex is decorated $U^{(j)}$, this forces the unique $c_2$ vertex to occur in row $j$; likewise, a u-turn with down arrows forces the unique $c_2$ in row $\overline{j}$. As no other change of orientation in horizontal edges along a row is possible, this uniquely determines the remaining weights in the configuration. Indeed the first four decorations in Figure \ref{fig:six.vertex.model} give the 4 horizontal orientation preserving configurations in the six-vertex model and the column arrows from either above or below are determined inductively, working from the outside pair of rows labeled $(1, \overline{1})$ to the center. 

Recall that the arrows on column edges above row 1 point up in columns whose index is a part in a fixed partition $\mu + \rho = \lambda = [\lambda_1, \cdots, \lambda_n]$ with $\lambda_1 > \cdots > \lambda_n$. Otherwise the arrows in column edges above row 1 point down. The arrows on column edges below row $\overline{1}$ all point down. Thus the only possible locations for the unique $c_2$ vertex in a pair of rows $(j,\overline{j})$ is at a column with index $\lambda_{\sigma(j)}$, for some $\sigma(j) \in \{1,\cdots,n\}$, a part in the partition $\lambda$. We may think of the role of this $c_2$ as removing a part from the partition $\lambda$, and so one part is removed for each of the $n$ pairs of rows $(j, \overline{j})$.

These observations are recorded in the following

\begin{lemma}\label{le:weyl.group.bijection}  When using the character weights in the families $\mathfrak{B}^\lambda,\mathfrak{B}_*^\lambda,\mathfrak{C}^\lambda$ and $\mathfrak{C}_*^\lambda$ (resp., in the family $\mathfrak{D}^\lambda$ when $\lambda_n = 1$ or the family $\mathfrak{BC}^\lambda$), configurations with nonzero weights are in bijection with the Weyl group $S_n \ltimes (\pm 1)^{\oplus n}$ (resp., the Weyl group within $S_n \ltimes (\pm 1)^{\oplus n}$ consisting of those elements with an even number of $-1$ entries in the second factor), as follows.  

Let $A$ be such a configuration, and suppose $A$ is an element of one of $\mathfrak{B}^\lambda,\mathfrak{B}_*^\lambda,\mathfrak{C}^\lambda$ or $\mathfrak{C}_*^\lambda$. If there is a $c_2$ entry is row $j$, column $\lambda_k$, then the corresponding element $(\sigma,\mathbf{v})$ satisfies $\sigma(j)=k$ and $v_j = 1$; if there is a $c_2$ entry in row $\bar j$, column $\lambda_k$, then the corresponding element satisfies $\sigma(j)=k$ and $v_j=0$.  The same correspondence holds when $A$ is an element of $\mathfrak{D}^\lambda$ when $\lambda_n = 1$, with the exception that if there is a $c_2$ entry in row $\bar j$ and column $\lambda_n = 1$, then $v_j$ is chosen so that the total number of $-1$ entries in $\mathbf{v}$ is even.  Likewise if $A$ is an element of $\mathfrak{BC}^\lambda$, the value of $v_n$ is chosen so that the number of $-1$ entries in $\mathbf{v}$ is even.
\end{lemma}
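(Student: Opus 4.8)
The plan is to leverage the structural facts about character-weight configurations established in the discussion preceding the lemma, and to turn them into an explicit two-directional correspondence. Recall that with the character weights every $c_1$ vertex (and, in the $\mathfrak{C}$ family, every $L$ corner) has weight $0$, so a state of nonzero weight has no $c_1$ vertices; consequently the horizontal orientation in any row can only be reversed from rightward to leftward, and this reversal happens at a $c_2$ vertex. First I would record the map in the stated direction: to a nonzero-weight state $A$ I assign $\sigma$ and $\mathbf{v}$ by reading off, for each row-pair $(j,\overline{j})$, the column $\lambda_{\sigma(j)}$ in which the unique $c_2$ of that pair sits, together with whether it sits in the unbarred row ($v_j=1$) or the barred row ($v_j=0$); the latter dichotomy is exactly the bend decoration $U^{(j)}$ versus $D^{(j)}$. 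The substance of the argument is then (i) proving $\sigma$ really is a permutation, (ii) checking the assignment is reversible, so each $(\sigma,\mathbf{v})$ comes from a unique admissible state, and (iii) treating $\mathfrak{D}$ and $\mathfrak{BC}$, where a parity phenomenon cuts the image down to the even subgroup.

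For step (i) I would run the vertical analogue of the horizontal analysis, column by column. In a fixed column the vertical orientation is preserved by every $a$- and $b$-vertex and reversed only at $c_1$ and $c_2$ vertices; since $c_1$ is excluded, an up-to-down reversal occurs at most once (at a $c_2$) and a down-to-up reversal cannot occur at all. The top boundary edge of a column points up exactly when its index is a part of $\lambda$, while the bottom boundary edge always points down. Hence a non-part column admits no $c_2$, and a part-column must contain exactly one $c_2$ to convert its up top-edge into the down bottom-edge. There are $n$ part-columns and, by the bend analysis already given, exactly one $c_2$ per row-pair among $n$ pairs; a double count forces a bijection between pairs and parts, i.e. $\sigma\in S_n$.

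For step (ii), in the families $\mathfrak{B},\mathfrak{B}_*,\mathfrak{C},\mathfrak{C}_*$ I would argue that once the $c_2$ location in each pair is prescribed by $(\sigma,\mathbf{v})$, the remaining edges are forced: within each row the single reversal pins down all horizontal edges, and the vertical edges follow inductively from the outermost pair $(1,\overline{1})$ inward, exactly as in the existing discussion. I would verify this forced filling never violates a boundary condition or vertex rule, so each $(\sigma,\mathbf{v})\in S_n\ltimes(\pm 1)^{\oplus n}$ is realized by exactly one admissible state; here the central row $0$ of $\mathfrak{B}_*$ and $\mathfrak{C}$ is transparent, carrying no $c_2$ (its boundary edges both point rightward, and the $\mathfrak{C}$ corner is forced to be the weight-nonzero $R$), so it neither removes a part nor constrains a sign. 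This gives the bijection with the full Weyl group.

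The main obstacle is step (iii), where in both $\mathfrak{D}$ (with $\lambda_n=1$) and $\mathfrak{BC}$ one sign degree of freedom is lost and I must show the image is precisely the index-two even subgroup. For $\mathfrak{BC}$ the self-paired central row $n=\overline{n}$ has both ends pointing inward, so it must contain one orientation-reversing $c_2$ but offers no unbarred/barred alternative; the configuration thus yields only $(\sigma,v_1,\dots,v_{n-1})$, of which there are $2^{n-1}n!$, and I define $v_n$ to be the unique value making the number of $-1$'s even. For $\mathfrak{D}$ with $\lambda_n=1$ the half-column of index $1$ carries the part $\lambda_n=1$ (its top edge points up) yet meets only barred rows, so the $c_2$ removing that part is forced into the barred row of the pair $j_0=\sigma^{-1}(n)$; this freezes $v_{j_0}$ at its barred value while the other $n-1$ signs stay free, again giving $2^{n-1}n!$ states, and reassigning $v_{j_0}$ by parity supplies the bijection with the even subgroup. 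The delicate points I expect to have to nail down are that this is the \emph{only} lost degree of freedom (so no further states disappear) and that the parity reassignment lands on the even subgroup rather than some other coset; the contrast with $\mathfrak{C}_*$, whose index-$0$ half-column never carries an up-arrow and hence imposes no such constraint, is the consistency check I would use.
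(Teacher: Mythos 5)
Your proposal is correct and follows essentially the same route as the paper, whose proof of this lemma is the informal discussion immediately preceding it: the character weights kill $c_1$ vertices (and the $L$ corner in type $\mathfrak{C}$), so each bent row pair carries exactly one $c_2$, located at a column indexed by a part of $\lambda$, with the bend direction dictating the barred versus unbarred row, and the remaining arrows filled in uniquely from the outside pair inward. Your column-by-column double count and the explicit $2^{n-1}n!$ parity bookkeeping for $\mathfrak{D}$ and $\mathfrak{BC}$ are just more formal renderings of the paper's observation that each pair ``removes one part'' of $\lambda$, so the two arguments coincide in substance.
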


%
%

\begin{remark*}
Our bijection can be thought of as encoding the data of an element $(\sigma,\mathbf{v}) \in S_n \times (\pm 1)^{\oplus n}$ in the rows of an $n \times n$ signed permutation matrix (e.g., the $j$th entry of $\mathbf{v}$ determines the sign of the nonzero entry in the $j$th row of the associated matrix).  Because this data is encoded via the rows of the associated matrix (as opposed to columns), the group operation under our bijection is slightly different than the usual presentation for the semi-direct product: if  $(\sigma_1,\mathbf{v}_1),(\sigma_2,\mathbf{v}_2)$ are given, then $$(\sigma_1,\mathbf{v}_1)(\sigma_2,\mathbf{v}_2) = (\sigma_2\sigma_1,\mathbf{v}_1\mathbf{v}_2^{\sigma_1}).$$
\end{remark*}

We now compute the weight of an admissible state $A_w$ associated to an element $w$ from the Weyl group. In what follows the Weyl vectors for these classical groups of types $B, C$, and $D$, half the sum of positive roots, are denoted
$$ \rho_B = [n-1/2,n-3/2,\ldots,3/2,1/2], \quad \rho_C=[n, n-1, \ldots, 1], \quad \rho_D = [n-1,n-2,\cdots,1,0]. $$  
The vector $\rho$ without a subscript continues to denote $n$-tuple of integers $\rho = [n, n-1, \ldots, 1]$ convenient for labeling columns of ice models.

\begin{lemma}\label{le:weight.of.weyl.group.configuration}
Write $\lambda = \mu+\rho$. Let $w$ be a given element in the Weyl group of type $B$,$C$ or $D$, and let $A_w$ be the corresponding element in $\mathfrak{M}^\lambda$, where $\mathfrak{M} \in \{\mathfrak{B},\mathfrak{B}_*,\mathfrak{C},\mathfrak{C}_*,\mathfrak{D},\mathfrak{BC}\}$.
Using the character weights, we have
$$\text{wt}(A_w) = 
\left\{\begin{array}{ll}i^{|\mu|}(-1)^n \boldsymbol{x}^{\rho_B} (-1)^{\ell(w)} \boldsymbol{x}^{w\left(\mu + \rho_B\right)},&\mbox{ if }\mathfrak{M}^\lambda \in \{\mathfrak{B}^\lambda,\mathfrak{B}_*^\lambda\}\\[5pt]
i^{|\mu|} (-1)^n \boldsymbol{x}^{\rho_C} (-1)^{\ell(w)}\boldsymbol{x}^{w(\mu+\rho_C)},&\mbox{ if }\mathfrak{M}^\lambda \in \{\mathfrak{C}^\lambda,\mathfrak{C}_*^\lambda\}\\[5pt]
 i^{|\mu|} \boldsymbol{x}^{\rho_D} (-1)^{\ell(w)} \boldsymbol{x}^{w(\mu+\rho_D)},&\mbox{ if }\mathfrak{M}^\lambda = \mathfrak{D}^\lambda\\[5pt]
 i^{|\mu|}\mathbf{x}^{\rho_B}(-1)^{\ell(w)}\mathbf{x}^{w(\mu+\rho_B)},&\mbox{ if }\mathfrak{M}^\lambda = \mathfrak{BC}^\lambda
\end{array}\right.
$$
\end{lemma}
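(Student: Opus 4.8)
The plan is to compute $\text{wt}(A_w)$ directly from the explicit description of $A_w$ furnished by Lemma~\ref{le:weyl.group.bijection}, and then to read off the three pieces of the claimed formula---the monomial $\boldsymbol{x}^{\rho_\star+w(\mu+\rho_\star)}$, the power of $i$, and the sign $(-1)^{\ell(w)}$---separately. First I would record what the character weights kill and keep: since $c_1^{(j)}=1-t_j^2=0$ and, in type $\mathfrak{C}$, $L=0$, the only admissible states are the permutation-type states of Lemma~\ref{le:weyl.group.bijection}, in which each row pair $(j,\overline{j})$ carries a single $c_2$ vertex at column $\lambda_{\sigma(j)}$ (in row $j$ if $v_j=1$, in row $\overline{j}$ if $v_j=0$), and every other interior vertex is an $a$- or $b$-vertex whose orientation is forced by propagating arrows inward from the boundary. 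Because $a_1^{(j)}=a_2^{(j)}=c_2^{(j)}=1$, the weight $\text{wt}(A_w)$ collapses to the product of the $b$-vertex weights $b_1^{(\bullet)}=i\,x^{\pm1}$, $b_2^{(\bullet)}=i\,x^{\mp1}$, together with the bend weights $U^{(j)},D^{(j)}$ of Table~\ref{tab:summarized.bending.weights} and, in the families with a central row, the weight of the special central vertex.

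For the monomial I would use $b_1^{(j)}=i\,x_j$, $b_2^{(j)}=i\,x_j^{-1}$ and the symmetry relations $b_1^{(\overline{j})}=b_2^{(j)}$, $b_2^{(\overline{j})}=b_1^{(j)}$, so that the exponent of $x_j$ in $\text{wt}(A_w)$ is
$$ \#\{b_1^{(j)}\} - \#\{b_2^{(j)}\} - \#\{b_1^{(\overline{j})}\} + \#\{b_2^{(\overline{j})}\}. $$
A $b_1$ vertex occurs exactly where a rightward horizontal edge meets an up-pointing vertical edge and a $b_2$ exactly where a leftward edge meets a down-pointing one, and in each row the unique $c_2$ separates the leftward region from the rightward region. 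Counting the columns carrying an up-arrow on either side of that $c_2$ in rows $j$ and $\overline{j}$---governed by the part $\lambda_{\sigma(j)}=(\mu+\rho)_{\sigma(j)}$ removed in that row pair and by whether $v_j=1$ or $0$---I expect this alternating count to collapse to $(\rho_\star)_j+\varepsilon_j(\mu+\rho_\star)_{\sigma(j)}$, with $\varepsilon_j=+1$ if $v_j=1$ and $\varepsilon_j=-1$ if $v_j=0$. This is precisely the $j$-th coordinate of $\rho_\star+w(\mu+\rho_\star)$ under the row-encoding of $w$ described in the remark following Lemma~\ref{le:weyl.group.bijection}, which produces the factor $\boldsymbol{x}^{\rho_\star}\boldsymbol{x}^{w(\mu+\rho_\star)}$.

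For the scalar $i^{|\mu|}(\pm1)^n(-1)^{\ell(w)}$, each $b$-vertex contributes a single factor of $i$, so the bulk power of $i$ is the total number $N_b$ of $b$-vertices; by the same count as in Lemma~\ref{le:statistic.on.b.vertices}, with $s=0$ here since there are no $c_1$ vertices (Lemma~\ref{le:minus.one.entries}), $N_b$ is the inversion number of the associated matrix. I would then fold in the remaining $i$'s and signs coming from the $n$ bend vertices (equal to $i$ for each down-turn in types $\mathfrak{B},\mathfrak{C}$ and to $1$ otherwise) and from the central vertex, and show the product is $i^{|\mu|}(\pm1)^n(-1)^{\ell(w)}$: the $w$-independent ``bulk'' part of the inversion count yields the constant $i^{|\mu|}(\pm1)^n$, while the part depending on the relative order of the removed columns and on the sign pattern $\mathbf{v}$ contributes $(-1)^{\ell(w)}$, where $\ell$ is the Coxeter length in the relevant type (ordinary inversions of $\sigma$ together with the negative inversions recorded by $\mathbf{v}$, the latter forced even in types $\mathfrak{D}$ and $\mathfrak{BC}$).

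The main obstacle is this last sign identification: translating the parity of the combined vertex-and-bend count into $\ell(w)$ uniformly across the families, since each has slightly different end behavior (the $U/D$ convention of Table~\ref{tab:summarized.bending.weights}, the vanishing $L=0$ in type $\mathfrak{C}$, and the even-sign restriction on $\mathbf{v}$ for $\mathfrak{D}$ and $\mathfrak{BC}$). Rather than pin down the absolute parity, I would isolate the $w$-dependence by tracking how the $b$-count and bend orientations change when $w$ is multiplied by a simple reflection $s_\alpha$ (an adjacent transposition or the type-specific end node): such a move flips exactly one crossing, multiplying the weight by $-1$ and permuting the $x$-data by $s_\alpha$, in the same local manner that underlies Lemmas~\ref{le:transposition.invariance} and \ref{le:fish.invariance}. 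Fixing the overall constant and sign by the single base computation $w=e$---where $A_e$ has no crossings beyond those forced by the staircase $\rho$ and its weight is read off by inspection---and inducting on $\ell(w)$ then gives the formula, with the type-by-type differences entering only through the base case and the end-node reflection.
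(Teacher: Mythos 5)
Your proposal follows essentially the same route as the paper's proof: an explicit row-pair-by-row-pair computation of the monomial and the powers of $i$ from the forced structure of the permutation-type states, followed by identifying the sign with $(-1)^{\ell(w)}$ via the base case $w=e$ and induction on simple reflections (adjacent transpositions plus the type-specific end node), which is precisely how the paper establishes its parity statement $\phi(w)\equiv\ell(w)\ (\mathrm{mod}\ 2)$. The only cosmetic differences are that you phrase the inductive step as a direct sign flip of the weight rather than through the paper's explicit statistic $\phi$, and you invoke the inversion-number count of Lemma~\ref{le:statistic.on.b.vertices} as a shortcut for the bulk power of $i$; both reduce to the same case-by-case verification the paper carries out.
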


\begin{proof}
We give a detailed proof for $\mathfrak{B}^\lambda$, then later explain how to modify this argument for the other models.

Our proof begins by determining the contribution to the Boltzmann weight of an admissible state from any pair of rows $(j, \overline{j})$. The weight will depend on whether the bend along this row points up or down.

\noindent {\bf CASE 1:} The u-turn bend in $(j,\bar j)$ points up.

In this case, the orientation along the row $j$ (not $\overline{j}$) changes exactly once, according to the location of the $c_2^{(j)}$ vertex at the column labeled $\lambda_{\sigma(j)}$. Let $\mathcal{S}_j$ to be the set of parts $\lambda_i$ of the partition $\lambda$ such that the edge above row $j$ in column $\lambda_i$ has an upward arrow. The cardinality of the set $\mathcal{S}_j$, which we denote by $\ell_j$, is equal to $r$ minus the number of rows in $[1, \ldots, j-1]$ such that the u-turn bend has an ``up'' configuration. 

Further, set $\ell_j^+$ to be the number of parts in $\mathcal{S}_j$ with $\lambda_i > \lambda_{\sigma(j)}$ (i.e. column numbers in $\mathcal{S}_j$ to the left of $\lambda_{\sigma(j)}$). Similarly let $\ell_j^-$ be the number of parts in $\mathcal{S}_j$ with $\lambda_i \leqslant \lambda_{\sigma(j)}$ (i.e. equal to or to the right of $\lambda_{\sigma(j)}$). Notice that the number of columns in row $\bar j$ which have an upward pointing arrow is precisely $\ell_j-(n-j)-1$, since the rows $(j+1,\overline{j+1}),\cdots,(n,\bar n)$ each have one $c_2$ entry that changes the corresponding column orientation from upward to downward, and additionally column $\lambda_{\sigma(j)}$ is downward-pointing.  Hence the number of columns decorated by $b_1$ in row $\bar j$ is $\ell_j-(n-j)-1$.  

Having named all this data, one can now precisely compute the contribution of the row $(j, \overline{j})$ to the Boltzmann weight of the configuration.  The figure below is an attempt to draw a sufficiently generic representation of this configuration. (The picture, but not the calculation we perform, assumes that $\lambda_{m_1} = \lambda_1$ for example.)

\begin{figure}[!ht]
\centering
  \begin{tikzpicture}[scale=.75]

\node [label=left:$j$] at (1,2) {};
\node [label=left:$\overline{j}$] at (1,-2) {};
\node [label=left:wt in row $j$:] at (1,0.85) {};
\node [label=left:wt in row $\overline{j}$:] at (1,-3.15) {};

\draw [>-] (1,2) -- (2,2);
\draw [>-] (1,-2) -- (2,-2);

\draw [<-<] (1.5,2.5) -- (1.5,1.5);
\node [label=below:$ix_j$] at (1.5,1.5) {};
\draw [>->] (1.5,-1.5) -- (1.5,-2.5);
\node [label=below:$1$] at (1.5,-2.5) {};

\node [label=above:$\lambda_{m_1}$] at (1.5, 2.5) {};

\draw [>->] (2,2) -- (3,2);
\draw [>->,style=dotted] (2,-2) -- (3,-2);

\draw [>->] (2.5,2.5) -- (2.5,1.5);
\node [label=below:$1$] at (2.5,1.5) {};

\draw [>-,style=dotted] (3,2) -- (4,2);
\draw [>-] (3,-2) -- (4,-2);

\draw [>->] (3.5,-1.5) -- (3.5,-2.5);
\node [label=below:$1$] at (3.5,-2.5) {};

\draw[>-] (4,2) -- (5,2);
\draw[>-] (4,-2) -- (5,-2);

\draw[>->] (4.5,2.5) -- (4.5,1.5);
\node [label=below:$1$] at (4.5,1.5) {};
\draw[<-<] (4.5,-1.5) -- (4.5,-2.5);
\node [label=above:$\lambda_{u_1}$] at (4.5, -1.5) {};
\node [label=below:$ix_j^{-1}$] at (4.5,-2.4) {};

\draw[>-] (5,2) -- (6,2);
\draw[>-] (5,-2) -- (6,-2);

\draw[<-<] (5.5,2.5) -- (5.5,1.5);
\node [label=below:$ix_j$] at (5.5,1.5) {};
\draw[>->] (5.5,-1.5) -- (5.5,-2.5);
\node [label=below:$1$] at (5.5,-2.5) {};

\node [label=above:$\lambda_{m_2}$] at (5.5, 2.5) {};

\draw[>-] (6,2) -- (7,2);
\draw[>-,style=dotted] (6,-2) -- (7,-2);

\draw[>->] (6.5,2.5) -- (6.5,1.5);
\node [label=below:$1$] at (6.5,1.5) {};

\draw[>-,style=dotted] (7,2) -- (8,2);
\draw[>-] (7,-2) -- (8,-2);

\draw[>->] (7.5,-1.5) -- (7.5,-2.5);
\node [label=below:$1$] at (7.5,-2.5) {};

\draw[>-] (8,2) -- (9,2);
\draw[>-] (8,-2) -- (9,-2);

\draw[<->] (8.5,2.5) -- (8.5,1.5);
\node [label=above:$\lambda_{\sigma(j)}$] at (8.5,2.5) {};
\node [label=below:$1$] at (8.5,1.5) {};
\draw[<-<] (8.5,-1.5) -- (8.5,-2.5);
\node [label=above:$\lambda_{u_2}$] at (8.5,-1.5) {};
\node [label=below:$ix_j^{-1}$] at (8.5,-2.4) {};

\draw[<-] (9,2) -- (10,2);
\draw[>-] (9,-2) -- (10,-2);

\draw[>->] (9.5,2.5) -- (9.5,1.5);
\node [label=below:$ix_j^{-1}$] at (9.5,1.6) {};
\draw[<-<] (9.5,-2.5) -- (9.5,-1.5);
\node [label=below:$1$] at (9.5,-2.5) {};

\draw[<-,style=dotted] (10,2) -- (11,2);
\draw[>-,style=dotted] (10,-2) -- (13,-2);


\draw[<-] (11,2) -- (12,2);

\draw[>->] (11.5,2.5) -- (11.5,1.5);
\node [label=below:$ix_j^{-1}$] at (11.5,1.6) {};

\draw[<-] (12,2) -- (13,2);

\draw[<-<] (12.5,2.5) -- (12.5,1.5);
\node [label=below:$1$] at (12.5,1.5) {};
\node [label=above:$\lambda_{t_1}$] at (12.5, 2.5) {};

\draw[<-] (13,2) -- (14,2);
\draw[>-] (13,-2) -- (14,-2);

\draw[>->] (13.5, 2.5) -- (13.5, 1.5);
\node [label=below:$ix_j^{-1}$] at (13.5,1.6) {};
\draw[<-<] (13.5, -1.5) -- (13.5, -2.5);
\node [label=above:$\lambda_{u_{\ell_j-(n-j)-1}}$] at (14.3,-1.7) {};
\node [label=below:$ix_j^{-1}$] at (13.5,-2.4) {};

\draw[<-,style=dotted] (14,2) -- (15,2);
\draw[>-] (14,-2) -- (15,-2);

\draw[>->] (14.5, -1.5) -- (14.5, -2.5);
\node [label=below:$1$] at (14.5,-2.5) {};

\draw[<-] (15,2) -- (16,2);
\draw[>-,style=dotted] (15,-2) -- (16,-2);

\draw[>->] (15.5,2.5) -- (15.5,1.5);
\node [label=below:$ix_j^{-1}$] at (15.5,1.6) {};

\draw[<-] (16,2) -- (17,2);
\draw[>-] (16,-2) -- (17,-2);

\draw[<-<] (16.5,2.5) -- (16.5,1.5);
\node [label=below:$1$] at (16.5,1.5) {};
\draw[>->] (16.5,-1.5) -- (16.5,-2.5);
\node [label=below:$1$] at (16.5,-2.5) {};

\node [label=above:$\lambda_{t_{\ell_j^-}}$] at (16.8,2.1) {};

\draw [<-]
	(17,2) arc (90:0:2);
\draw [*-<]
	(19,0) arc (0:-90:2);

\end{tikzpicture}
\end{figure}

Thus the contribution to the Boltzmann weight of the configuration from vertices in row $j$ is:
$$ (ix_j)^{\ell_j^+} (ix_j^{-1})^{\lambda_{\sigma(j)} - \ell_j^{-}} = i^{\ell_j^+ - \ell_j^- + \lambda_{\sigma(j)}} x_j^{-\lambda_{\sigma(j)} + \ell_j} $$
and from vertices from row $\overline{j}$ is
$$ (ix_j^{-1})^{j-n+\ell_j-1}.$$
Putting them together gives:
\begin{equation}\label{eq:up.arrow.row.weight}  i^{\lambda_{\sigma(j)}-1+j-n} (x_j)^{n+1-j} (-1)^{\ell_j^+} x_j^{-\lambda_{\sigma_j}} = i^{\lambda_{\sigma(j)} + n + j -1} x_j^{n-j+1/2} (-1)^{\ell_j^+-n}x_j^{-(\lambda_{\sigma(j)}-1/2)}.
\end{equation}

\noindent {\bf CASE 2:} The u-turn bend in row $(j,\bar j)$ points down.

We first define similar sets and associated cardinalities for $\overline{j}$, as in the previous case. Thus the set $\mathcal{S}_{\overline{j}}$ is the subset of parts $\lambda_i$ of $\lambda$ such that the edge below row $\overline{j}$ in column $\lambda_i$ has an upward pointing arrow. (Note that unlike $\mathcal{S}_j$, the set $\mathcal{S}_{\overline{j}}$ omits $\lambda_{\sigma(j)}$.) Set $\ell_{\overline{j}}$ to be the cardinality of $\mathcal{S}_{\overline{j}}$, the number of rows with index in $[1, \ldots, j-1]$ such that the u-turn bend arrows point down. Thus
\begin{equation} \ell_{\overline{j}} = j -1 - \# \text{(up arrows on u-turns in $[1, \ldots, j-1]$)} = \ell_j + j - n - 1. \label{ellrel} \end{equation}
Again , $\ell_{\overline{j}}^+$ is the cardinality of the subset of $\lambda_i$ in $\mathcal{S}_{\overline{j}}$ with $\lambda_i > \lambda_{\sigma(j)}$, and $\ell_{\overline{j}}^-$ the subset with $\lambda_i < \lambda_{\sigma(j)}$.

By a very similar argument to Case 1, with the roles of $j$ and $\overline{j}$ reversed, the contribution to the Boltzmann weight of the configuration from vertices in row $j$ is:
$$ (i x_j)^{\ell_j} = (i x_j)^{\ell_{\overline{j}}+n - j+1} $$
using (\ref{ellrel}). The contribution to the weight of the configuration from row $\overline{j}$ is:
$$ (i x_j^{-1})^{\ell_{\overline{j}}^+} (i x_j)^{\lambda_{\sigma(j)} - \ell_{\overline{j}}^--1} = i^{\ell_{\overline{j}}^+ - \ell_{\overline{j}}^- + \lambda_{\sigma(j)}-1} x_j^{\lambda_{\sigma(j)} - \ell_{\overline{j}}-1}. $$
Combining the two contributions and remembering that the down bend has weight $i$ in the $\mathfrak{B}$ model, the total contribution from the row $(j, \overline{j})$ is:
\begin{equation}\label{eq:down.arrow.row.weight} 
i^{\lambda_{\sigma(j)} + n - j + 1} x_j^{n-j}  (-1)^{\ell_{\overline{j}}^+} x_j^{\lambda_{\sigma(j)}} = i^{\lambda_{\sigma(j)} + n + j -1} x_j^{n-j+1/2} (-1)^{\ell_{\overline{j}}^+-j+1}x_j^{\lambda_{\sigma(j)}-1/2}. 
\end{equation}

Recall that $\lambda_{\sigma(j)} = \mu_{\sigma(j)} + \rho_{\sigma(j)}$, so that $\lambda_{\sigma(j)} - 1/2 = \mu_{\sigma(j)} + (\rho_B)_{\sigma(j)}$. Then putting the two cases together and taking the product of the weights in pairs $(j, \overline{j})$ for $j = 1, \ldots, n$, the Boltzmann weight of the admissible state is:
$$ \prod_{j=1}^n \left[ i^{\lambda_{\sigma(j)}+n+j-1} x_j^{n-j+1/2} \right] (-1)^{\phi(w)} \boldsymbol{x}^{w(\mu+\rho_B)} =  (-1)^{n} i^{|\mu|} \boldsymbol{x}^{\rho_B} (-1)^{\phi(w)} \boldsymbol{x}^{w(\mu+\rho_B)}, $$
where 
$$ \phi(w) = \sum_{j=1}^n \left\{ \begin{array}{cl} \ell_{j}^+ -n & \text{if u-turn in $(j, \overline{j})$ is up,} \\ \ell_{\overline{j}}^+  -  j + 1 & \text{if u-turn in $(j, \overline{j})$ is down} \end{array} \right\}. $$

Thus to finish the proof for the $\mathfrak{B}^\lambda$ model, it suffices to show that $\phi(w) \equiv \ell(w) \; (\text{mod } 2)$. First, we address the base case $w = e$, the identity element of $W$. According to the bijection above, this occurs when all u-turn bends are {\it down} and $\lambda_j$ is the omitted part in the pair $(j, \overline{j})$. Then $\ell_{\overline{j}}^+ = j-1$ for all $j$, and $\phi(e) = 0 = \ell(e)$ as desired.

Thus it suffices to prove that acting by a simple reflection $w \mapsto s_i \cdot w$, then $\phi(s_i w) \equiv \phi(w) + 1$ (mod 2). This is separated into two cases according to the length of the simple root.

{\bf Case A:} Replace $w$ with $s_jw$, where $s_j= ((j,j+1),\{1,\dots,1\})$.  

If $w = (\sigma,\mathbf{v})$, then $s_j w = \left(\sigma (j,j+1),\mathbf{v}^{(j,j+1)}\right)$.  At the level of admissible states, this swaps the omitted parts $\lambda_{\sigma(j)}$ and $\lambda_{\sigma(j+1)}$ in the pairs of rows $(j, \overline{j})$ and $(j+1, \overline{j+1})$, respectively, as well as exchanging the directions along the $j$ and $j+1$st bends. Let $A_w$ and $A_{s_j w}$ denote the admissible states corresponding to the elements $w$ and $s_j w$, respectively. One proves the result by examining all possible cases, though since there are more than a handful, we will focus only on two representative cases.

First, suppose $\sigma(j) > \sigma(j+1)$ and that the bend along the $j$th row points upwards, whereas the row in the $j+1$st row points downward.  Figure \ref{fig:tracking.transposition.effect} shows a schematic of the configurations $A_w$ and $A_{s_j w}$, omitting rows other than $(j,\bar j)$ and $(j+1,\overline{j+1})$ as well as columns other than $\lambda_{\sigma(j)}$ and $\lambda_{\sigma(j+1)}$.

\begin{figure}[!ht]
\begin{minipage}{2.5in}
$$
\begin{tikzpicture}[scale=.5]
\node [label=above:$\underline{A_w}$] at (3,12) {};
\node [label=left:$j$] at (0,9) {};
\node [label=left:$j+1$] at (0,7) {};
\node [label=left:$\overline{j+1}$:] at (0,3) {};
\node [label=left:$\overline{j}$:] at (0,1) {};
\node [label=above:$\cdots$] at (3,9) {};
\node [label=below:$\cdots$] at (3,1) {};
\node [label=above:$\lambda_{\sigma(j+1)}$] at (1,10) {};
\node [label=above:$\lambda_{\sigma(j)}$] at (5,10) {};

\draw[>->] (0,1) -- (2,1);
\draw[-] (2,1) -- (4,1);
\draw[>->] (4,1) -- (6,1);
\draw [>-<] (0,3) -- (2,3);
\draw[-] (2,3) -- (4,3);
\draw [<-<] (4,3) -- (6,3);
\draw [>->] (0,7) -- (2,7);
\draw[-] (2,7) -- (4,7);
\draw [>->] (4,7) -- (6,7);
\draw [>->] (0,9) -- (2,9);
\draw[-] (2,9) -- (4,9);
\draw [>-<] (4,9) -- (6,9);

\draw [<-] (1,10) -- (1,8);
\draw [<-<] (1,8) -- (1,6);
\draw [<-] (1,4) -- (1,2);
\draw [>->] (1,2) -- (1,0);
\draw [<-] (5,10) -- (5,8);
\draw [>->] (5,8) -- (5,6);
\draw [>-] (5,4) -- (5,2);
\draw [>->] (5,2) -- (5,0);
\end{tikzpicture}$$
\end{minipage}
\quad 
\begin{minipage}{2.5in}
$$
\begin{tikzpicture}[scale=.5]
\node [label=above:$\underline{A_{s_j w}}$] at (3,12) {};
\node [label=left:$j$] at (0,9) {};
\node [label=left:$j+1$] at (0,7) {};
\node [label=left:$\overline{j+1}$:] at (0,3) {};
\node [label=left:$\overline{j}$:] at (0,1) {};
\node [label=above:$\cdots$] at (3,9) {};
\node [label=below:$\cdots$] at (3,1) {};
\node [label=above:$\lambda_{\sigma(j+1)}$] at (1,10) {};
\node [label=above:$\lambda_{\sigma(j)}$] at (5,10) {};

\draw[>-<] (0,1) -- (2,1);
\draw[-] (2,1) -- (4,1);
\draw[<-<] (4,1) -- (6,1);
\draw [>->] (0,3) -- (2,3);
\draw[-] (2,3) -- (4,3);
\draw [>->] (4,3) -- (6,3);
\draw [>->] (0,7) -- (2,7);
\draw[-] (2,7) -- (4,7);
\draw [>-<] (4,7) -- (6,7);
\draw [>->] (0,9) -- (2,9);
\draw[-] (2,9) -- (4,9);
\draw [>->] (4,9) -- (6,9);

\draw [<-] (1,10) -- (1,8);
\draw [<-<] (1,8) -- (1,6);
\draw [<-] (1,4) -- (1,2);
\draw [<->] (1,2) -- (1,0);
\draw [<-] (5,10) -- (5,8);
\draw [<->] (5,8) -- (5,6);
\draw [>-] (5,4) -- (5,2);
\draw [>->] (5,2) -- (5,0);
\end{tikzpicture}$$
\end{minipage}
\caption{Configurations for $A_w$ and $A_{s_j w}$ when $\lambda_{\sigma(j)}>\lambda_{\sigma(j+1)}$: the case where $(j,\overline{j})$ is up and $(j+1,\overline{j+1})$ is down.}\label{fig:tracking.transposition.effect}
\end{figure}

\comment{
$$
\begin{minipage}{2.5in}
\begin{centering}
\underline{u-turn at $(j,\overline{j})$ is up}
\begin{align*}
\ell^+_{j}(A_{s_j w}) &= \ell_{j+1}^+(A_w)\\
\ell_{j+1}^+(A_{ s_jw}) &= \ell_j^+(A_w)-1\\
\ell_{\overline{j}}^+(A_{s_j w}) &= \ell_{\overline{j+1}}^+(A_w)\\
\ell_{\overline{j+1}}^+(A_{s_j w}) &= \ell_{\overline{j}}^+(A_w)
\end{align*}
\end{centering}
\end{minipage}
\quad 
\begin{minipage}{2.5in}
\begin{centering}
\underline{u-turn at $(j,\overline{j})$ is down}
\begin{align*}
\ell^+_{j}(A_{ s_jw}) &= \ell_{j+1}^+(A_w)\\
\ell_{j+1}^+(A_{s_j w}) &= \ell_j^+(A_w)\\
\ell_{\overline{j}}^+(A_{s_j w}) &= \ell_{\overline{j+1}}^+(A_w)\\
\ell_{\overline{j+1}}^+(A_{s_j w}) &= \ell_{\overline{j}}^+(A_w)+1
\end{align*}
\end{centering}
\end{minipage}
$$
}

First, note that the statistics in the rows away from $j$ and $j+1$ do not change, so $\phi(w)$ and $\phi( s_jw)$ only differ in the contribution from statistics related to $(j,\overline{j})$ and $(j+1,\overline{j+1})$.  The relevant terms in the computation of $\phi(w)$ are $$\ell_j^+(A_w) -n + \ell_{\overline{j+1}}^+(A_w)-(j+1)+1,$$ whereas in $\phi(s_jw)$ the relevant terms are  
$$\ell_{\overline{j}}^+(A_{s_j w}) -j+1 + \ell_{j+1}^+(A_{s_j w}) -n.$$  By examining Figure \ref{fig:tracking.transposition.effect}, one sees that these latter terms can be replaced with $$\ell_{\overline{j+1}}^+(A_w) -j+1 + \ell_j^+(A_w)-n,$$ and hence $\phi(s_j w) \equiv \phi(w) + 1$ (mod 2) as desired.  

The other case we examine has $\sigma(j)>\sigma(j+1)$ as before, but this time both bends point upwards.  The schematic is Figure \ref{fig:tracking.transposition.effectII}, though this time we've not drawn the lower rows (since they aren't involved in calculating $\phi$ in this case).

\begin{figure}[!ht]
\begin{minipage}{2.5in}
$$
\begin{tikzpicture}[scale=.5]
\node [label=above:$\underline{A_w}$] at (3,12) {};
\node [label=left:$j$] at (0,9) {};
\node [label=left:$j+1$] at (0,7) {};
\node [label=above:$\cdots$] at (3,9) {};
\node [label=above:$\lambda_{\sigma(j+1)}$] at (1,10) {};
\node [label=above:$\lambda_{\sigma(j)}$] at (5,10) {};

\draw [>-<] (0,7) -- (2,7);
\draw[-] (2,7) -- (4,7);
\draw [<-<] (4,7) -- (6,7);
\draw [>->] (0,9) -- (2,9);
\draw[-] (2,9) -- (4,9);
\draw [>-<] (4,9) -- (6,9);

\draw [<-] (1,10) -- (1,8);
\draw [<->] (1,8) -- (1,6);
\draw [<-] (5,10) -- (5,8);
\draw [>->] (5,8) -- (5,6);
\end{tikzpicture}$$
\end{minipage}
\quad 
\begin{minipage}{2.5in}
$$
\begin{tikzpicture}[scale=.5]
\node [label=above:$\underline{A_{ s_jw}}$] at (3,12) {};
\node [label=left:$j$] at (0,9) {};
\node [label=left:$j+1$] at (0,7) {};
\node [label=above:$\cdots$] at (3,9) {};
\node [label=above:$\lambda_{\sigma(j+1)}$] at (1,10) {};
\node [label=above:$\lambda_{\sigma(j)}$] at (5,10) {};

\draw [>->] (0,7) -- (2,7);
\draw[-] (2,7) -- (4,7);
\draw [>-<] (4,7) -- (6,7);
\draw [>-<] (0,9) -- (2,9);
\draw[-] (2,9) -- (4,9);
\draw [<-<] (4,9) -- (6,9);

\draw [<-] (1,10) -- (1,8);
\draw [>->] (1,8) -- (1,6);
\draw [<-] (5,10) -- (5,8);
\draw [<->] (5,8) -- (5,6);
\end{tikzpicture}$$
\end{minipage}
\caption{Configurations for $A_w$ and $A_{ s_jw}$ when $\lambda_{\sigma(j)}>\lambda_{\sigma(j+1)}$: the case where $(j,\overline{j})$ and $(j+1,\overline{j+1})$ are up.}\label{fig:tracking.transposition.effectII}
\end{figure}

Notice that again the statistics away from rows $(j,\overline{j})$ and $(j+1,\overline{j+1})$ do not change.  The contributions from these rows in $A_w$ is
$$\ell_j^+(A_w)-n+\ell_{j+1}^+(A_w)-n,$$
whereas the contributions from these rows in $A_{s_jw}$ is
$$\ell_{j}^+(A_{s_jw})-n + \ell_{j+1}^+(A_{s_jw})-n = \ell_{j+1}^+(A_w) - n + \ell_{j}^+(A_w)-1-n.$$
The desired result again follows.

The other cases are handled in a similar way. 

{\bf Case B:} Replace $w$ with $s_nw$, where $s_n$ is the simple reflection $s_n=(\text{id},\{1,1,\dots,1,-1\})$. 

At the level of admissible states, this amounts to changing the orientation of the u-turn bend in the pair $(n, \overline{n})$. For example, if the bend is changed from up to down, the omitted part $\lambda_k$ swaps from row $n$ to row $\overline{n}$. If $A_w$ and $A_{s_n w}$ are the corresponding admissible states, this alters $\phi(w)$ by sending
$$ \ell_n^+(A_w)-n \longmapsto \ell_{\overline{n}}^+(A_{s_n w}) -n + 1. $$ 
But $\ell_n^+ = \ell_{\overline{n}}^+$, since the orientation along any column not equal to $\lambda_k$ doesn't change between the top of row $n$ and the bottom of row $\overline{n}$.  
Moreover, all other statistics in pairs of rows $(j, \overline{j})$ with $j \in [1, \ldots, n-1]$ remain unchanged. Thus $\phi(s_{n} w) \equiv \phi(w) + 1$ (mod 2) as desired. The case when the bend is changed from down to up is identical.

This concludes the proof in the case of model $\mathfrak{B}^\lambda$.  The proofs for the other families are very similar, with small changes coming from the total weight along row $(j,\bar j)$ and --- in the case of $\mathfrak{D}^\lambda$ and $\mathfrak{BC}^\lambda$ --- a small change to the function $\phi$.

First, in model $\mathfrak{B}_*^\lambda$, note that the weight along $(j,\bar j)$ when the bend points up is identical to above; when the bend points down, on the other hand, we do not get an additional factor of $i$ coming from the bend, but we do pick up a factor of $i$ from the $b^{(0)}$ configuration that will appear in column $\lambda_{\sigma(j)}$ in the central row.  If we include this contribution from the central row into the total contribution from row $(j,\bar j)$ the total contributions in the up and down bend cases agree with Equations (\ref{eq:up.arrow.row.weight}) and (\ref{eq:down.arrow.row.weight}), and we can proceed as in the previous case.


\comment{ 
A similar phenomenon occurs in the model $\mathfrak{B}^{**}$.  In this case, however, if $(j,\bar j)$ has an upward pointing arrow then the corresponding entry in the central row in column $\lambda_{\sigma(j)}$ has a $b^{(0)}$ configuration, and hence contributes a factor of $i$.  Moreover, the down arrows in this model are weighted as $-1$.  Factoring these into the total weight along row $(j,\bar j)$ gives
\begin{align*}
\mbox{up arrow} &\longrightarrow i^{\lambda_k+j+r} x_j^{r-j+1/2} (-1)^{\ell_j^+-r}x_j^{-(\lambda_{\sigma(j)}-1/2)}\\
\mbox{down arrow} &\longrightarrow i^{\lambda_k+j+r} x_j^{r-j+1/2} (-1)^{\ell_j^+-j+1}x_j^{\lambda_{\sigma(j)}-1/2}.\\
\end{align*}
Now when we take the product along all $r$ rows, we also need to account for the $b^{(0)}$ entries in the central row that we get for each column which isn't a part of $\lambda$, of which there are $\lambda_1-r$.  This contributes an additional factor of $i^{\lambda_1-r}$.
}


In the $\mathfrak{C}^\lambda$ model, an upward bend in row $(j,\bar j)$ gives the same total weight as in Equation (\ref{eq:up.arrow.row.weight}) for the $\mathfrak{B}^\lambda$ model.  When $(j,\bar j)$ points down, one gains an additional factor of $-i$ from row $0$, column $\lambda_{\sigma(j)}$ (since there will be a $b^{(0)}$ entry there which is weighted $ix_0t_0 = -i$), as well as an additional factor of $ix_j$ from a $b_2$ vertex in the half-column.  Hence the total weight along such a row has its power of $i$ changed by $4$ (and therefore, without effect), but also its power of $x_j$ increased by one.  It follows that along row $(j,\bar j)$ we have a total weight of
\begin{align*}
\mbox{up arrow} &\longrightarrow i^{\lambda_k+j+n-1} x_j^{n-j+1} (-1)^{\ell_j^+-n}x_j^{-\lambda_{\sigma(j)}}\\
\mbox{down arrow} &\longrightarrow i^{\lambda_k+j+n-1} x_j^{n-j+1} (-1)^{\ell_{\overline{j}}^+-j+1}x_j^{\lambda_{\sigma(j)}}\\
\end{align*}
(The same is true in the $\mathfrak{C}_*^\lambda$ model, since the only difference in this model is that we lose a factor of $i$ along a downward bend as well as a factor of $-i$ since we don't have a central row.)

The $\mathfrak{D}^\lambda$ model has a few additional peculiarities.  Notice that the $c_2$ entry in column $\lambda_n = 1$ must occur in the bottom portion of its row, even though the corresponding element in the Weyl group might not take value $+1$.  For the sake of bookkeeping, the consistent choice is to include column $\lambda_n$ when computing the size of $\mathcal{S}_j$ when $(j,\bar j)$ points upward; it won't affect the size of $\ell_j^+$, since this column is assuredly to the right of column $\lambda_{\sigma(j)}$.  With this in hand, when row $(j,\bar j)$ has an upward pointing bend its total weight again matches that from Equation (\ref{eq:up.arrow.row.weight}).  When $(j,\bar j)$ points downward, however, we lose a factor of $ix_j$ from the contribution in column $\lambda_n=1$ and row $j$ (since there is no vertex there).  Since the downward bend is weighted as $1$ in this model (instead of $i$ as in the $\mathfrak{B}$ model), one then gets the following expression for the total weight along $(j,\bar j)$:
\begin{align*}
\mbox{up arrow} &\longrightarrow i^{\lambda_{\sigma(j)}+j+n+1} x_j^{n-j} (-1)^{\ell_j^+-n+1}x_j^{-(\lambda_{\sigma(j)}-1)}\\
\mbox{down arrow} &\longrightarrow i^{\lambda_{\sigma(j)}+j+n+1} x_j^{n-j} (-1)^{\ell_{\overline{j}}^+-j+1}x_j^{\lambda_{\sigma(j)}-1}\\
\end{align*}
At this point, most of the argument precedes as in the case of the $\mathfrak{B}^\lambda$ model, except now the function $\phi$ has changed slightly.  Note that it still agrees with the earlier definition of $\phi$ for downward pointing arrows, and hence gives the correct parity for the identity element.  Now note that changes in the $(\pm 1)^{\oplus n}$ component of the Weyl group come in pairs and do not change the parity of the length function; one can show that the analog of Case B in this setting doesn't change the parity of $\phi$, and hence finish the proof in this case.

Finally we settle the $\mathfrak{BC}^\lambda$ model.  In this case we will have to keep track not only contributions to the weight of $A_w$ in rows $(j,\overline{j})$ for $1 \leq j \leq n-1$, but also the weight the central row labeled $n$.  First, if $1 \leq j \leq n-1$ and the bend in row $(j,\overline{j})$ points upward, then the weight coming from row $(j,\overline{j})$ matches the weight computed in Equation \ref{eq:up.arrow.row.weight} for the $\mathfrak{B}^\lambda$ model.  If the arrow points down, the only difference with the $\mathfrak{B}^\lambda$ model is that there is no weight of $i$ coming from the bend.  Nontrivial terms are contributed to the weight of the central row by those columns to the left of $\lambda_{\sigma(n)}$ which point up along row $n$, as well as those columns to the right of $\lambda_{\sigma(n)}$ which point down along row $n$.  In each case, the factor that is contributed is $i$.  Hence we have
\begin{align*}
\mbox{up arrow} &\longrightarrow i^{\lambda_{\sigma(j)}+j+n-1} x_j^{n-j+1/2} (-1)^{\ell_j^+-n}x_j^{-(\lambda_{\sigma(j)}-1/2)}\\
\mbox{down arrow} &\longrightarrow i^{\lambda_{\sigma(j)}+j+n} x_j^{n-j+1/2} (-1)^{\ell_{\overline{j}}^+-j}x_j^{\lambda_{\sigma(j)}-1/2}\\
\mbox{central row} &\longrightarrow i^{\ell_n^+} i^{\lambda_{\sigma(n)}-\ell_n^-} = i^{\lambda_{\sigma(n)}+\ell_n}(-1)^{-\ell_n^-}\\
\end{align*}
Taking a product over all rows and doing a bit of algebra (taking into account that $\ell_n$ equals $n$ minus the number of upward bends), one finds that the total weight is
$$\text{wt}(A_w) = i^{|\mu|} \mathbf{x}^{\rho_B} (-1)^{\phi(w)} \mathbf{x}^{w(\mu+\rho_B)},$$ where $\phi$ is the function defined by
$$\phi(w) = \left(\sum_{j=1}^{n-1} \left\{\begin{array}{cl}
\ell_j^+-n&\text{ if u-turn in }(j,\overline{j}) \text{ is up,}\\
\ell_{\overline{j}}^+-j+1&\text{ if u-turn in }(j,\overline{j}) \text{ is down}
\end{array}\right\}\right)-\ell_n^-+1.
$$
One proves that $\phi(w) \equiv \ell(w) \mod{2}$ as before.  (Note in particular that since $w$ is an element of the Weyl group corresponding to type $D$, one has $\phi(w) \equiv \phi(s_{n-1}w) \mod{2}$ for the element $s_{n-1} = (\text{id},\{1,1,\dots,1,-1,-1\}$.)
\end{proof}

By summing across all elements in a given family, we arrive at the following

\begin{theorem}\label{weylchartheorem} Let $\mathfrak{M} \in \{\mathfrak{B},\mathfrak{B}_*,\mathfrak{C},\mathfrak{C}_*,\mathfrak{D},\mathfrak{BC}\}$ be given. Write $\lambda = \mu + \rho$, where as before $\rho = [n,n-1,\ldots,1]$.  Then using the character weights we have
$$ \mathcal{Z}(\mathfrak{M}^\lambda) =  i^{|\mu|} \mathcal{Z}(\mathfrak{M}^{\rho}) \; \chi_{\mu}(\boldsymbol{x}), $$
where $\chi_{\mu}$ is the highest weight character of a type $\mathfrak{M}$ representation corresponding to the integer partition $\mu$.
\end{theorem}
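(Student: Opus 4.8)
The plan is to reduce the statement to the two preceding lemmas together with the classical Weyl character formula, so that the theorem becomes essentially their assembly. First I would invoke Lemma~\ref{le:weyl.group.bijection}: under the character weights every state of nonzero weight corresponds to a unique element $w$ of the relevant Weyl group $W$---the full hyperoctahedral group $S_n \ltimes (\pm1)^{\oplus n}$ for $\mathfrak{B},\mathfrak{B}_*,\mathfrak{C},\mathfrak{C}_*$, and the index-two subgroup $W_D$ of even sign changes for $\mathfrak{D}$ and $\mathfrak{BC}$---so that the defining sum~(\ref{partitiondef}) collapses to $\mathcal{Z}(\mathfrak{M}^\lambda) = \sum_{w \in W} \text{wt}(A_w)$. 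Substituting the explicit weights from Lemma~\ref{le:weight.of.weyl.group.configuration} and pulling the $w$-independent prefactor out of the sum then gives, in every case, an identity of the shape
\[
\mathcal{Z}(\mathfrak{M}^\lambda) = c\, i^{|\mu|}\, \boldsymbol{x}^{\rho_\star} \sum_{w \in W} (-1)^{\ell(w)}\, \boldsymbol{x}^{w(\mu + \rho_\star)},
\]
where the scalar $c$ (namely $(-1)^n$ for $\mathfrak{B},\mathfrak{B}_*,\mathfrak{C},\mathfrak{C}_*$ and $1$ for $\mathfrak{D},\mathfrak{BC}$) and the Weyl vector $\rho_\star$ are read off directly from that lemma.

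The key step is then to recognize the remaining alternating sum as the numerator in the Weyl character formula for the appropriate type. Writing $a_\nu := \sum_{w\in W}(-1)^{\ell(w)}\boldsymbol{x}^{w\nu}$, that formula reads $\chi_\mu(\boldsymbol{x}) = a_{\mu+\rho_\star}/a_{\rho_\star}$, equivalently $\chi_\mu\, a_{\rho_\star} = a_{\mu+\rho_\star}$. Specializing the displayed identity to $\mu = 0$ (equivalently $\lambda = \rho$, so $i^{|\mu|}=1$) yields $\mathcal{Z}(\mathfrak{M}^\rho) = c\, \boldsymbol{x}^{\rho_\star}\, a_{\rho_\star}$, so that the prefactor $c\,\boldsymbol{x}^{\rho_\star}$ is shared by $\mathcal{Z}(\mathfrak{M}^\lambda)$ and $\mathcal{Z}(\mathfrak{M}^\rho)$. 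To avoid assuming $\mathcal{Z}(\mathfrak{M}^\rho)\neq 0$, I would verify the claim by cross-multiplication rather than division:
\[
i^{|\mu|}\,\mathcal{Z}(\mathfrak{M}^\rho)\,\chi_\mu(\boldsymbol{x}) = i^{|\mu|}\, c\, \boldsymbol{x}^{\rho_\star}\, a_{\rho_\star}\, \chi_\mu(\boldsymbol{x}) = i^{|\mu|}\, c\, \boldsymbol{x}^{\rho_\star}\, a_{\mu+\rho_\star} = \mathcal{Z}(\mathfrak{M}^\lambda),
\]
which is exactly the asserted identity.

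The only type-sensitive points are bookkeeping: for $\mathfrak{B},\mathfrak{B}_*$ one uses $W = S_n\ltimes(\pm1)^{\oplus n}$ with $\rho_\star = \rho_B$; for $\mathfrak{C},\mathfrak{C}_*$ the same $W$ with $\rho_\star = \rho_C$; and for $\mathfrak{D}$ the even subgroup $W_D$ with $\rho_\star = \rho_D$. The genuinely nonstandard case is $\mathfrak{BC}$, where Lemma~\ref{le:weight.of.weyl.group.configuration} pairs the type~$D$ Weyl group $W_D$ with the type~$B$ Weyl vector $\rho_B$; here the relevant character identity is not the ordinary Weyl formula but Proctor's formula~\cite{proctor}, $\chi_\mu = a_{\mu+\rho_B}/a_{\rho_B}$ with the sum in $a_\nu$ taken over $W_D$. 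Invoking it in place of the Weyl formula completes the argument. I expect no real obstacle at this stage: all the substantive difficulty---the bijection, and above all the delicate per-row weight computation of Lemma~\ref{le:weight.of.weyl.group.configuration} establishing $\phi(w)\equiv\ell(w)\pmod{2}$---has already been absorbed into the two preceding lemmas, so the remaining task is the purely formal one of matching their output against the character (or denominator) formula of the relevant type.
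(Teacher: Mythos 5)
Your proposal is correct and follows essentially the same route as the paper's proof: both collapse $\mathcal{Z}(\mathfrak{M}^\lambda)$ to a sum over the Weyl group via Lemmas~\ref{le:weyl.group.bijection} and \ref{le:weight.of.weyl.group.configuration}, pull out the $w$-independent prefactor, apply the Weyl (or, for $\mathfrak{BC}$, Proctor's) character formula in multiplicative form $a_{\mu+\rho_\star} = \chi_\mu\, a_{\rho_\star}$, and recognize the $\mu=0$ specialization as $\mathcal{Z}(\mathfrak{M}^\rho)$. The paper carries this out explicitly only for $\mathfrak{B}^\lambda$ and declares the other cases identical, so your explicit bookkeeping of the constants $c$, the Weyl vectors $\rho_\star$, and the $\mathfrak{BC}$ case is a slightly more complete write-up of the same argument rather than a different one.
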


\begin{proof}
We will prove the result for $\mathfrak{B}^\lambda$, as the other results are essentially identical.  We will use the fact that $$\chi_\mu(\boldsymbol{x}) = \frac{\sum_{w \in W} (-1)^{\ell(w)} \boldsymbol{x}^{w(\mu+\rho_B)}}{\sum_{w \in W} (-1)^{\ell(w)} \boldsymbol{x}^{w(\rho_B)}}.$$  For any $\lambda$, Lemma \ref{le:weight.of.weyl.group.configuration} tells us \begin{align*}
\mathcal{Z}(\mathfrak{B}^\lambda) = \sum_{w \in W} \text{wt}(A_w) &= \sum_{w \in W} i^{|\mu|} (-1)^n \boldsymbol{x}^{\rho_B} (-1)^{\ell(w)} \boldsymbol{x}^{w(\mu+\rho_B)}.
\end{align*} Hence
\begin{align*}
\mathcal{Z}(\mathfrak{B}^\lambda)&=i^{|\mu|}(-1)^n\boldsymbol{x}^{\rho_B}\sum_{w \in W} (-1)^{\ell(w)}\boldsymbol{x}^{w(\mu+\rho_B)}\\
&=i^{|\mu|}(-1)^n\boldsymbol{x}^{\rho_B}\chi_\mu(\boldsymbol{x}) \sum_{w \in W} (-1)^{\ell(w)}\boldsymbol{x}^{w(\rho_B)}\\
&=i^{|\mu|}\left( \sum_{w \in W} i^0(-1)^n\boldsymbol{x}^{\rho_B}(-1)^{\ell(w)}\boldsymbol{x}^{w(0+\rho_B)}\right)\chi_\mu(\boldsymbol{x})\\
&=i^{|\mu|}\mathcal{Z}(\mathfrak{B}^\rho)\chi_\mu(\boldsymbol{x}),
\end{align*} where the last equality comes from applying the previous equation to $\lambda = \rho$.
\end{proof}

\comment{
Note that $\mu$ and $\rho$ have been chosen to correspond to the column indexing on ice models, numbered left to right in ascending order. In order to calculate the character of type $B$, one needs to use the Weyl vector
$$ \rho_B = (r-1/2, \ldots, 3/2, 1/2). $$
Then
$$ \chi_{\mu}(\boldsymbol{x}) = \frac{\mathcal{A}_{\mu+\rho_B}(\boldsymbol{x})}{\mathcal{A}_{\rho_B}(\boldsymbol{x})}, \qquad \text{where} \; \mathcal{A}_{\lambda}(\boldsymbol{x}) = \sum_{w \in W} (-1)^{\ell(w)} \boldsymbol{x}^{w(\lambda)}. $$  A similar methodology is used in type $D$ with the Weyl vector $\rho_D = (r-1,r-2,\cdots,1,0)$.

The weight lattice of type $B$ consists of $r$-tuples of $1/2$-integers $(\lambda_1, \ldots, \lambda_r)$ such that $\lambda_i - \lambda_j \in \mathbb{Z}$ for all pairs $i, j$. The theorem above is stated for integral weights $\mu$, which are highest weights for the universal cover $Spin_{2r+1}(\mathbb{C})$.

\begin{proof}
We will focus our proof on the case $\star = \mathfrak{B}$ to begin; afterward, we describe the modifications one must make to this argument in the other cases.

We first explain the bijection between non-zero admissible configurations and elements $w$ of the Weyl group $W \simeq S_r \ltimes (\mathbb{Z} / 2 \mathbb{Z})^r$, where $S_r$ is the symmetric group on $r$ letters. To begin, one makes a free choice of up or down arrows on the $r$ u-turn bends, identifying the up arrow configuration on each bend with the generator $(-1)$ of each copy of $\mathbb{Z} / 2 \mathbb{Z}$ in $W$. Because the weight of $c_1$ vertices has been set to $0$, the only way to reverse the orientation of arrows along horizontal edges in a row is via a $c_2$ vertex, switching rightward arrows to leftward arrows. If a given pair of rows $(j, \overline{j})$ has u-turn with up arrows forces the unique $c_2$ vertex to occur in row $j$ and a u-turn with down arrows forces the unique $c_2$ in row $\overline{j}$. As no other change of orientation in horizontal edges along a row is possible, this uniquely determines the remaining weights in the configuration. Indeed the first four weights in the table give the 4 horizontal orientation preserving configurations in the six-vertex model:

\bigskip

\begin{center}
\begin{tabular}
{|c|c|c|c|c|c|}
\hline
$\begin{tikzpicture}
\node [label=left:$j$] at (0.5,1) {};
\node [label=right:$ $] at (1.5,1) {};
\node [label=above:$ $] at (1,1.5) {};
\node [label=below:$ $] at (1,0.5) {};
\draw [>->] (0.5,1) -- (1.5,1);
\draw [>->] (1,1.5) -- (1,0.5);
\end{tikzpicture}$
&
$\begin{tikzpicture}
\node [label=left:$j$] at (0.5,1) {};
\node [label=right:$ $] at (1.5,1) {};
\node [label=above:$ $] at (1,1.5) {};
\node [label=below:$ $] at (1,0.5) {};
\draw [<-<] (0.5,1) -- (1.5,1);
\draw [<-<] (1,1.5) -- (1,0.5);
\end{tikzpicture}$
&
$\begin{tikzpicture}
\node [label=left:$j$] at (0.5,1) {};
\node [label=right:$ $] at (1.5,1) {};
\node [label=above:$ $] at (1,1.5) {};
\node [label=below:$ $] at (1,0.5) {};
\draw [>->] (0.5,1) -- (1.5,1);
\draw [<-<] (1,1.5) -- (1,0.5);
\end{tikzpicture}$
&
$\begin{tikzpicture}
\node [label=left:$j$] at (0.5,1) {};
\node [label=right:$ $] at (1.5,1) {};
\node [label=above:$ $] at (1,1.5) {};
\node [label=below:$ $] at (1,0.5) {};
\draw [<-<] (0.5,1) -- (1.5,1);
\draw [>->] (1,1.5) -- (1,0.5);
\end{tikzpicture}$
&
$\begin{tikzpicture}
\node [label=left:$j$] at (0.5,1) {};
\node [label=right:$ $] at (1.5,1) {};
\node [label=above:$ $] at (1,1.5) {};
\node [label=below:$ $] at (1,0.5) {};
\draw [<->] (0.5,1) -- (1.5,1);
\draw [>-<] (1,1.5) -- (1,0.5);
\end{tikzpicture}$
&
$\begin{tikzpicture}
\node [label=left:$j$] at (0.5,1) {};
\node [label=right:$ $] at (1.5,1) {};
\node [label=above:$ $] at (1,1.5) {};
\node [label=below:$ $] at (1,0.5) {};
\draw [>-<] (0.5,1) -- (1.5,1);
\draw [<->] (1,1.5) -- (1,0.5);
\end{tikzpicture}$
\\
\hline
$a_1^{(j)} = 1$&$a_2^{(j)}=1$&$b_1^{(j)}=i x_j$&$b_2^{(j)}=i x_j^{-1}$&$c_1^{(j)}=0$&$c_2^{(j)}=1$\\ \hline
\end{tabular}
.\end{center}

\bigskip

and the column arrows from either above or below are determined inductively, working from the outside pair of rows labeled $(1, \overline{1})$ to the center. Recall that the arrows on column edges above row 1 point up in columns whose index is a part in a fixed partition $\mu + \rho = \lambda = [\lambda_1, \cdots, \lambda_r]$ with $\lambda_1 > \cdots > \lambda_r$. Otherwise the arrows in column edges above row 1 point down. The arrows on column edges below row $\overline{1}$ all point down. Thus the only possible locations for the unique $c_2$ vertex in a pair of rows $(j,\overline{j})$ is at a column with index $\lambda_k$, for some $k$, a part in the partition $\lambda$. We may think of the role of this $c_2$ as removing a part from the partition $\lambda$ and so a part is removed for each of the $r$ pairs of rows $(j, \overline{j})$.
To complete the bijection between states and the elements of the Weyl group $W$, if $\lambda_k$ is the omitted part in row $(j,\overline{j})$, then the associated permutation $\sigma \in S_r$ sends $j$ to $k$.

\end{proof}
}


\bibliographystyle{acm}
\bibliography{ice}

\end{document}